\title{Controlling LEF growth in some group extensions}
\author{Henry Bradford}
\newtheorem{thm}{Theorem}[section]
\newtheorem{lem}[thm]{Lemma}
\newtheorem{propn}[thm]{Proposition}
\newtheorem{coroll}[thm]{Corollary}
\newtheorem{defn}[thm]{Definition}
\newtheorem{ex}[thm]{Example}
\newtheorem{rmrk}[thm]{Remark}
\DeclareMathOperator{\Aut}{Aut}
\DeclareMathOperator{\Cay}{Cay}
\DeclareMathOperator{\diam}{diam}
\DeclareMathOperator{\FAlt}{FAlt}
\DeclareMathOperator{\FSym}{FSym}
\DeclareMathOperator{\im}{im}
\DeclareMathOperator{\PSL}{PSL}
\DeclareMathOperator{\Schr}{Schr}
\DeclareMathOperator{\SL}{SL}
\DeclareMathOperator{\supp}{supp}
\DeclareMathOperator{\Sym}{Sym}
\begin{document}

\maketitle

\begin{abstract}
We study the LEF growth function of a finitely generated LEF group $\Gamma$, 
which measures the orders of finite groups admitting 
local embeddings of balls in a word metric on $\Gamma$. 
We prove that any sufficiently smooth increasing function 
between $n!$ and $\exp(\exp(n))$ is close to the LEF growth function 
of some finitely generated group. 
This is achieved by estimating the LEF growth of some semidirect products 
of the form $\FSym (\Omega) \rtimes \Gamma$, 
where $\Gamma \curvearrowright \Omega$ is an appropriate transitive action, 
and $\FSym (\Omega)$ is the group of finitely supported permutations of $\Omega$. 
A key tool in the proof is to identify sequences of finitely presented 
subgroups with short ``relative'' presentations. 
In a similar vein we also obtain estimates on the LEF growth 
of some groups of the form $E_{\Omega} (R) \rtimes \Gamma$, 
for $R$ an appropriate unital ring and 
$E_{\Omega} (R)$ the subgroup of $\Aut_R (R[\Omega])$ 
generated by all transvections with respect to basis $\Omega$. 
\end{abstract}

\section{Introduction}

A group $\Gamma$ is \emph{LEF} if every finite region of its 
multiplication table appears in the multiplication table of some finite group $Q$. 
The class of LEF groups properly contains 
the class of residually finite groups. 
If $\Gamma$ is finitely generated by a subset $S$, 
then the balls $B_S(n)$ in the associated word metric 
on $\Gamma$ form a prototypical sequence of finite subsets. 
The \emph{LEF growth function} $\mathcal{L}_{\Gamma} ^S$ of $\Gamma$ 
is defined such that $\mathcal{L}_{\Gamma} ^S (n) = \lvert Q \rvert$, 
where $Q$ is of minimal order among finite groups 
in which the (partial) multiplication on $B_S (n)$ may be modelled. 
LEF growth was introduced in \cite{ArzChe} 
and independently in \cite{BoRaStud} (under a different name). 
In this paper we continue our study of the LEF growth function 
initiated in \cite{Brad} and continued in \cite{BradDona}. 

\subsection*{Statement of results}

It is natural to ask which increasing functions 
arise as the LEF growth functions of groups. 
For instance in \cite{Brad} were exhibited 
groups with LEF growth on the order of $n^d$, $\exp(n^d)$ 
(for every positive integer $d$) 
and $\exp(\exp(n))$. 
In \cite{BradDona} it was further shown 
that there are uncountably many inequivalent types of LEF growth. 
We improve on these results, using completely different methods. 
Indeed we show that almost any sufficiently smoothly increasing function between 
exponential and doubly exponential is close to the LEF growth function of some group. 
Since the dependence of the function $\mathcal{L}_{\Gamma} ^S$ 
on the choice of finite generating set $S$ is slight, 
we suppress $S$ from our notation for the rest of the Introduction. 

\begin{thm} \label{SpectrumMainThm}
Let $f : \mathbb{N} \rightarrow \mathbb{N}$ 
be a permissible function. 
Then there exists a finitely generated LEF group $\Delta(f)$ 
and a constant $C>0$ such that 
$\mathcal{L}_{\Delta(f)} (n) \approx f(n)!$. 
\end{thm}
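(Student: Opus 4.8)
The plan is to realise $\Delta(f)$ in the advertised form $\FSym(\Omega) \rtimes \Gamma$, with $\Gamma = \Gamma(f)$ a finitely generated residually finite group acting transitively on a countable set $\Omega$ with basepoint $\omega_0$, and to reduce the theorem to a careful choice of the pair $(\Gamma, \Omega)$. Write $g(n)$ for the number of vertices within distance $n$ of $\omega_0$ in the Schreier graph $\Schr(\Gamma, \Stab(\omega_0), S)$, equivalently $|B_S(n)\cdot\omega_0|$. The two tasks are then (i) to prove the sandwich $f(C_1 n)! \preceq \mathcal{L}_{\Delta(f)}(n) \preceq f(C_2 n)!$ for constants depending only on the generating set, and (ii) to build $(\Gamma,\Omega)$ so that $g(n)\approx f(n)$, with the finer structural features that (i) requires; since $f$ is permissible this sandwich is exactly the assertion $\mathcal{L}_{\Delta(f)}(n) \approx f(n)!$. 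Before anything else I would record the routine fact that $\FSym(\Omega)\rtimes\Gamma$ is LEF whenever $\Gamma$ is: $\FSym(\Omega)$ is a directed union of finite symmetric groups, and a local embedding of a ball of $\FSym(\Omega)\rtimes\Gamma$ is assembled from a finite symmetric group acting on the union of the supports of the $\FSym$-coordinates occurring in the ball, together with a local embedding of the corresponding ball of $\Gamma$.

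For the lower bound, fix $\omega_1 = s\cdot\omega_0$ with $s\in S$; for each edge $e$ of the Schreier graph within distance $\sim n$ of $\omega_0$, the transposition $t_e\in\FSym(\Omega)$ of the endpoints of $e$ equals a conjugate $\gamma(\omega_0\,\omega_1)\gamma^{-1}$ with $|\gamma|_S$ controlled by the distance, hence lies in $B_S^{\Delta(f)}(n)$ once the constants are tuned. Edge-disjoint $t_e$ commute, edge-adjacent ones satisfy the braid relation, and more generally every relation among geometrically nearby $t_e$'s that is witnessed by words of length $O(n)$ holds in $\Delta(f)$; I would show that, for a suitably designed Schreier graph, these witnessed relations already present --- up to controlled quotients --- the symmetric group on the $\approx g(n)$ vertices of the ball (using that the edge-transpositions of a connected graph generate its full symmetric group, and that along a short-transport spanning subgraph the visible Coxeter-type relations pin the group down). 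Any local embedding $\phi$ of $B_S^{\Delta(f)}(n)$ into a finite group $Q$ must then send the $t_e$ to \emph{distinct} elements satisfying all these relations, so the subgroup they generate is a quotient of $\Sym(g(n))$ in which the images of the relevant transpositions stay distinct; for $g(n)$ large the only such quotient is $\Sym(g(n))$ itself, forcing $|Q|\ge g(n)!$, whence $\mathcal{L}_{\Delta(f)}(n)\succeq f(C_1 n)!$.

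For the upper bound I would, for each $n$, exhibit a finite model of the partial multiplication on $B_S^{\Delta(f)}(n)$ inside a group $\Sym(\bar\Omega_n)\rtimes\bar\Gamma_n$, where $\bar\Omega_n$ is the vertex set of a finite quotient of the Schreier graph, of size $\approx g(C_2 n)$, into which the ball of radius $\sim n$ about $\omega_0$ embeds faithfully, and $\bar\Gamma_n$ is a finite quotient of $\Gamma$ of controlled order. The role of the short relative presentations is to keep $|\bar\Omega_n|$ and $|\bar\Gamma_n|$ from blowing up with $n$: one identifies finitely presented subgroups $\Lambda_n\le\Delta(f)$, each containing $B_S^{\Delta(f)}(n)$, with $\Lambda_{n+1}$ admitting a presentation whose size is bounded by a fixed polynomial in that of $\Lambda_n$, so that the residual finiteness growth of $\Lambda_n$ --- which dominates $\mathcal{L}_{\Delta(f)}(n)$ --- is governed by this slowly growing bookkeeping and evaluates to $\preceq g(C_2 n)!$ times the residual finiteness growth of $\Gamma$. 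Choosing $\Gamma$ with at most exponential residual finiteness growth, which is absorbed into the factorial since $f(n)\succeq n$, gives $\mathcal{L}_{\Delta(f)}(n)\preceq f(C_2 n)!$.

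The principal obstacle is task (ii): producing $(\Gamma,\Omega)$. I need a finitely generated residually finite $\Gamma$ of at most exponential residual finiteness growth, with a transitive action whose Schreier graph has bounded degree, has ball-growth $\approx f$, is ``path-rich'' inside every ball in the sense required by the lower bound, and carries a compatible tower of finite quotients and finitely presented subgroups realising the short relative presentations demanded by the upper bound. Ball-growth $\approx f$ and threaded long paths inside balls are in tension --- exponential growth with long paths in balls is a space-filling-curve phenomenon that fails for tree-like graphs --- and reconciling these with bounded degree and the presentation bookkeeping forces a deliberate, scale-by-scale construction of $\Omega$; the content of permissibility is precisely that such a construction is possible and that the two estimates then close up to the same factorial. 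Granting the construction, the two LEF-growth estimates are comparatively mechanical.
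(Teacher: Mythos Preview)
Your overall architecture matches the paper's: take $\Gamma=F_2$ (a fixed group, not one depending on $f$), build a transitive LEF action $F_2\curvearrowright\Omega(f)$ whose Schreier ball-growth is exactly $f$, and set $\Delta(f)=\FSym(\Omega(f))\rtimes F_2$. Your lower-bound paragraph is essentially correct and is exactly what the paper does: the edge-transpositions of a spanning tree of $B_{S,\omega_0}(n)$ lie in $B^{\Delta(f)}(O(n))$, and the tree presentation of $\Sym(B_{S,\omega_0}(n))$ has relations of \emph{bounded} length (at most $8$), so any local embedding of a ball of $\Delta(f)$ restricts to a genuine homomorphism on $\Sym(B_{S,\omega_0}(n))$, forcing the target to have order at least $f(n)!$.

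There are, however, two places where your sketch goes astray. First, in the upper-bound paragraph you invoke ``short relative presentations'' and finitely presented subgroups $\Lambda_n$ containing $B^{\Delta(f)}(n)$, with the aim of bounding $\mathcal{L}_{\Delta(f)}(n)$ above by the residual finiteness growth of $\Lambda_n$. This has the roles reversed: the finitely presented subgroups with short presentations are the mechanism for the \emph{lower} bound (via Proposition~\ref{LERFRFsubgrp}), not the upper one. For the upper bound the paper does nothing of the sort---it simply constructs, for each $n$, an explicit local embedding of $I(n)=\Sym(B_{S,\omega_0}(n))\cdot B_S(n)\supseteq B^{\Delta(f)}(n)$ into $\Sym(X_n)\rtimes Q_n$, where $(Q_n,X_n)$ is a finite action locally modelling $(F_2,\Omega(f))$. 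No presentation bookkeeping is needed here; what is needed is that the action be LEF with $\lvert X_n\rvert\approx f(n)$ and $\lvert Q_n\rvert$ at most exponential in $n$ (the latter absorbed by the factorial). Your proposed $\Lambda_n$ would in any case be hard to produce, since any subgroup of $\Delta(f)$ containing the full generating set equals $\Delta(f)$, which is not residually finite.

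Second, you overstate the ``principal obstacle''. There is no tension between ball-growth $\approx f$ and the lower-bound argument, because the tree presentation of $\Sym(V)$ has relations of length at most $8$ for \emph{any} tree, so no ``path-rich'' or space-filling structure on the Schreier graph is required. The construction of $\Omega(f)$ is correspondingly direct: partition $\Omega(f)$ into layers $L_n$ with $\lvert L_n\rvert=f(n)-f(n-1)$, and let $a$ (resp.\ $b$) act by matching $L_{2m}\leftrightarrow L_{2m+1}$ (resp.\ $L_{2m-1}\leftrightarrow L_{2m}$) via products of $2$- and $3$-cycles; the permissibility inequality $(f(n)-f(n-1))/2\le f(n+1)-f(n)\le 2(f(n)-f(n-1))$ is exactly what makes such matchings exist. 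The finite models $(Q_n,X_n)$ are then obtained by truncating at layer $2n$, letting $a$ act arbitrarily on $L_{2n}$, and taking $Q_n=P_n\times G_n$ with $G_n$ a finite quotient of $F_2$ injecting $B_S(n)$ (available since $F_2$ has exponential full RF growth). This is the whole construction; the ``deliberate, scale-by-scale'' design you anticipate is no more than these two sentences.
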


Here ``$\approx$'' denotes an appropriate notion of equivalence of nondecreasing 
functions (see Definition \ref{comparisondefn}). 
The notion of a ``permissable'' function will be defined below 
(see Section \ref{SpectrumSect}), 
but suffice to say that it is flexible enough to yield the following examples, 
among many others. 
By ``$\preceq$'' we denote an appropriate notion of comparison of nondecreasing functions (see Definition \ref{comparisondefn}), 
such that $f_1 \approx f_2$ iff $f_1 \preceq f_2$ and $f_2 \preceq f_1$. 

\begin{coroll} \label{SpectrumCoroll}
For every $\alpha > 1$ and $\beta \in (0,1)$ 
there exist $C,c > 0$ and finitely generated LEF groups 
$\Gamma_1 (\alpha)$; $\Gamma_2 (\alpha)$ and $\Gamma_3 (\beta)$ such that: 
\begin{itemize}
\item[(i)] $\mathcal{L}_{\Gamma_1 (\alpha)}(n) \approx \exp (n^{\alpha})$; 
\item[(ii)] $\exp \big(c \exp(\log(n)^{\alpha})\big)
\preceq \mathcal{L}_{\Gamma_2 (\alpha)}(n) 
\preceq \exp \big(C \exp(\log(n)^{\alpha})\big)$; 
\item[(iii)] $\exp \big(c \exp(n^{\beta})\big)
\preceq \mathcal{L}_{\Gamma_3 (\beta)}(n) 
\preceq \exp \big(C \exp(n^{\beta})\big)$. 
\end{itemize} 
\end{coroll}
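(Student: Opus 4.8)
The plan is to obtain the Corollary by applying Theorem~\ref{SpectrumMainThm} to three explicitly chosen permissible functions and then simplifying the resulting factorials with Stirling's estimate $\log(m!) = m\log m - m + O(\log m)$, which gives $\log(f(n)!) \asymp f(n)\log f(n)$. Inverting the operation $f \mapsto f(n)!$ thus amounts to solving $f(n)\log f(n) \asymp \log g(n)$ for the desired target $g$, i.e.\ to taking $f(n)$ of size roughly $\log g(n)/\log\log g(n)$. Once such a permissible $f$ is in hand, Theorem~\ref{SpectrumMainThm} produces a finitely generated LEF group $\Delta(f)$ with $\mathcal{L}_{\Delta(f)}(n) \approx f(n)!$, and it remains only to read off the three cases. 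Throughout, $f$ will be a smooth function lying between $n$ and $\exp(n)$, which is exactly the range in which $f(n)!$ lies between $n!$ and $\exp(\exp(n))$.

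For (i), take $f_1(n) = \max\{n+1,\ \lceil n^{\alpha}/\log(n+2)\rceil\}$. Since $\alpha>1$, the second entry eventually dominates and $\log f_1(n) \asymp \log n$, so $\log(f_1(n)!) \asymp f_1(n)\log f_1(n) \asymp (n^{\alpha}/\log n)\cdot\log n = n^{\alpha}$; since $\log(f_1(n)!) \asymp n^{\alpha}$ already forces $f_1(n)! \approx \exp(n^{\alpha})$ (a constant rescaling of the argument converts $\exp(Cn^{\alpha})$ into $\exp(n^{\alpha})$ up to $\approx$), Theorem~\ref{SpectrumMainThm} yields $\Gamma_1(\alpha) := \Delta(f_1)$ with $\mathcal{L}_{\Gamma_1(\alpha)}(n) \approx \exp(n^{\alpha})$.

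For (ii) and (iii), take $f_2(n) = \lceil \exp((\log(n+2))^{\alpha})\rceil$ and $f_3(n) = \lceil \exp(n^{\beta})\rceil$; for large $n$ one has $n < f_2(n) < \exp(n)$ and $n < f_3(n) < \exp(n)$, using $\log n \le (\log n)^{\alpha} \le n$ (since $\alpha>1$) and $\log n \le n^{\beta} \le n$ (since $0<\beta<1$). Then $\log f_2(n) \asymp (\log n)^{\alpha}$ and $\log f_3(n) \asymp n^{\beta}$, so $\log(f_2(n)!) \asymp (\log n)^{\alpha}\exp((\log n)^{\alpha})$ and $\log(f_3(n)!) \asymp n^{\beta}\exp(n^{\beta})$. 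Because $\alpha>1$ and $\beta>0$, a constant rescaling of the argument absorbs the $(\log n)^{\alpha}$, resp.\ $n^{\beta}$, prefactor, so that for suitable $c,C>0$
\[
c\exp((\log n)^{\alpha}) \preceq \log(f_2(n)!) \preceq C\exp((\log n)^{\alpha}), \qquad
c\exp(n^{\beta}) \preceq \log(f_3(n)!) \preceq C\exp(n^{\beta}).
\]
Exponentiating and applying Theorem~\ref{SpectrumMainThm} with $\Gamma_2(\alpha):=\Delta(f_2)$ and $\Gamma_3(\beta):=\Delta(f_3)$ gives (ii) and (iii).

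The one step that is not purely formal --- and the main obstacle --- is verifying that $f_1$, $f_2$ and $f_3$ are permissible, i.e.\ that they satisfy the regularity requirements of Section~\ref{SpectrumSect} (monotonicity, the quantitative two-sided control on the local growth rate, and the normalisation imposed there). These functions have been chosen smooth and of moderate growth precisely so that this check reduces to elementary calculus, the only nuisances being the rounding to integer values and the behaviour on a finite initial segment of $\mathbb{N}$, which are absorbed by the minor truncations built into the definitions above. The compatibility of $\approx$ and $\preceq$ with Stirling's formula used throughout is then immediate from $\log(m!) = m\log m - m + O(\log m)$.
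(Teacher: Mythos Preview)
Your approach is correct and essentially the same as the paper's: both apply Theorem~\ref{SpectrumMainThm} to explicitly chosen permissible functions and simplify the resulting factorials via Stirling's approximation. The only cosmetic difference is that for (ii) and (iii) the paper takes $\tilde f(n)=\exp((\log n)^{\alpha})/(\log n)^{\alpha}$ and $\tilde f(n)=\exp(n^{\beta})/n^{\beta}$ so that $\tilde f(n)\log\tilde f(n)$ already matches the target, whereas you omit the denominators and absorb the residual $(\log n)^{\alpha}$, $n^{\beta}$ prefactors into the $\preceq$-comparison; both manoeuvres are valid and lead to the same conclusion.
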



Let us begin to describe how the groups in Theorem \ref{SpectrumMainThm} 
are constructed. 
One attractive feature of the class of LEF groups, 
which is not shared by the class of residually finite groups, 
is that LEF is preserved under various natural classes of 
group extensions, while residual finiteness is not. 
To wit, for any groups $\Gamma$ and $\Delta$; 
a $\Gamma$-set $\Omega$, and any unital ring $R$, 
$\Gamma$ acts naturally by automorphisms on 
(i) the direct sum $\oplus_{\Omega} \Delta$; 
(ii) the group $\FSym(\Omega)$ of finitely supported permutations 
of the set $\Omega$, and 
(iii) the subgroup $E_{\Omega} (R)$ of $\Aut_R (R[\Omega])$ 
generated by all transvections with respect to $\Omega$ 
(where $R[\Omega]$ is the free $R$-module on basis $\Omega$). 
We therefore have corresponding semidirect products: 
(i) the \emph{wreath product} $\Delta \wr_{\Omega} \Gamma = (\oplus_{\Omega} \Delta) \rtimes \Gamma$; 
(ii) the \emph{symmetric enrichment} 
$\mathcal{S}(\Gamma,\Omega) = \FSym (\Omega) \rtimes \Gamma$, 
and (iii) the \emph{elementary enrichment} 
$\mathcal{E}(\Gamma,\Omega,R) 
= E_{\Omega} (R) \rtimes \Gamma$. 
If $\Gamma$ and $\Delta$ are finitely generated LEF groups 
and the action of $\Gamma$ on $\Omega$ is a transitive \emph{LEF action} 
(see Definition \ref{LEFactiondefn} below), 
then the groups $\Delta \wr_{\Omega} \Gamma$, 
$\mathcal{S}(\Gamma,\Omega)$, $\mathcal{E}(\Gamma,\Omega,\mathbb{Z})$ and 
$\mathcal{E}(\Gamma,\Omega,\mathbb{F}_p)$ are finitely generated LEF groups. 
In this paper we study the LEF growth of symmetric and elementary enrichments. 
To give a sample of our conclusions, in the special case for which 
$\Omega = \Gamma$ with the regular $\Gamma$-action, we have the following bounds 
(see Theorems \ref{RFextnThm} and \ref{elemthm} below for the full statements 
encompassing non-regular transitive LEF actions). 

\begin{thm} \label{RegSummaryThm}
Let $\Gamma$ be a finitely generated LEF group. 
\begin{itemize}
\item[(i)] $\gamma_{\Gamma}(n)! 
\preceq \mathcal{L}_{\mathcal{S}(\Gamma,\Gamma)} (n) 
\preceq \mathcal{L}_{\Gamma} (n)!$; 

\item[(ii)] $\exp\big(n \gamma_{\Gamma} (n^{1/2})^2 \big) \preceq 
\mathcal{L}_{\mathcal{E}(\Gamma,\Gamma,\mathbb{Z})} (n) 
\preceq \exp \big( n \mathcal{L}_{\Gamma}(n)^2 \big)$; 

\item[(iii)] $\exp\big(\gamma_{\Gamma} (n^{1/2})^2 /C \big) \preceq 
\mathcal{L}_{\mathcal{E}(\Gamma,\Gamma,\mathbb{F}_p)} (n) 
\preceq \exp \big(C \mathcal{L}_{\Gamma}(n)^2 \big)$ (for some $C>0$). 
\end{itemize}
\end{thm}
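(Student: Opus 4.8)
The plan is to handle all three parts by one scheme: for the upper bounds, transplant a finite model of $\Gamma$ onto a finite set (resp. a finite ring); for the lower bounds, locate inside the enrichment finitely presented subgroups that are generated by short elements, admit presentations with short relators, and have no small finite quotient separating those generators. Fix a generating set $S'$ of the enrichment consisting of a generating set $S$ of $\Gamma$ together with one transposition $\tau=(1,s)$ (in case (i)) or one transvection $\theta=t_{s,1}(1)$ (in cases (ii), (iii)). Then an element of $B_{S'}(n)$ has $\Gamma$-coordinate in $B_S(n)$ and $\FSym(\Gamma)$- (resp. $E_\Gamma(R)$-) coordinate supported on $B_S(n+1)$, and, when $R=\mathbb{Z}$, with all matrix entries of absolute value at most $2^{O(n)}$.

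For the upper bounds I would take a finite group $Q$ modelling $B_S(Cn)$ in $\Gamma$, of order $\mathcal{L}_\Gamma(Cn)\approx\mathcal{L}_\Gamma(n)$, let $Q$ act on $X:=Q$ by left translation, and transport coordinates along the partial injection $B_S(Cn)\hookrightarrow\Gamma\twoheadrightarrow X$ to obtain maps $\FSym(\Gamma)\rtimes\Gamma\to\Sym(X)\rtimes Q$, resp. $E_\Gamma(R)\rtimes\Gamma\to E_X(R)\rtimes Q$, the latter after first reducing entries modulo $N:=2^{O(n)}$ when $R=\mathbb{Z}$; one checks these model $B_{S'}(n)$. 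Since $\lvert\Sym(X)\rtimes Q\rvert=\lvert Q\rvert!\,\lvert Q\rvert$, $\lvert E_X(\mathbb{Z}/N)\rtimes Q\rvert\approx N^{\lvert Q\rvert^{2}}\lvert Q\rvert$ and $\lvert E_X(\mathbb{F}_p)\rtimes Q\rvert\approx p^{\lvert Q\rvert^{2}}\lvert Q\rvert$, substituting $\lvert Q\rvert\approx\mathcal{L}_\Gamma(n)$ and $\log N\approx n$ gives the three right-hand sides, up to $\approx$.

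For the lower bounds I would use the principle that if $H\leq G=\langle S\rangle$ is generated by elements $x_1,\dots,x_k\in B_S(d)$ pairwise distinct in $G$ and $H$ has a finite presentation all of whose relators have length at most $\ell$ in the $x_i^{\pm1}$, then any finite group modelling $B_S(d\ell)$ in $G$ receives a homomorphism from $H$ injective on the $x_i$, hence has order at least that of the smallest finite quotient of $H$ separating the $x_i$. I would apply this with $H=\Sym(B_S(n'+1))\leq\FSym(\Gamma)$ in (i), generated by the adjacent transpositions $(g,gs)$, $g\in B_S(n')$, which are conjugates of $\tau$ in the enrichment and so lie in $B_{S'}(2n'+1)$, and which obey a Coxeter presentation with relators of bounded length in these generators; and with $H=E_{B_S(n')}(R)=\SL_{\gamma_\Gamma(n')}(R)$ in (ii), (iii), generated by the transvections on $B_S(n')$ and presented by the Steinberg relations, augmented when $R=\mathbb{Z}$ by generators $t_{a,b}(2^{j})$ with the short relations $t_{a,b}(2^{j+1})=t_{a,b}(2^{j})^{2}$. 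The decisive estimate is that an adjacent transposition on $B_S(n')$ has $S'$-length $O(n')$, whereas a transvection on $B_S(n')$ has $S'$-length $O(n'^{2})$ — realise it by conjugating a bounded transvection by a product of $O(n')$ signed-permutation matrices, each of $S'$-length $O(n')$ (keeping the far transvections as generators is more efficient here than rewriting them through the adjacent ones). Hence the Coxeter relators of (i) have $S'$-length $O(n')$, while the Steinberg relators of (ii), (iii) have $S'$-length $O(n'^{2})$; choosing $n'\approx n$ in (i) and $n'\approx n^{1/2}$ in (ii), (iii) places all relevant relators, and the $t_{a,b}(2^{j})$ for $j\leq O(n^{1/2})$ (which cost $O(j^{2})$ in $S'$), inside $B_{S'}(O(n))$. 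To conclude one invokes: simplicity of $\Alt$, so a finite quotient of $\Sym_m$ separating the transpositions equals $\Sym_m$, of order $m!$, yielding (i); simplicity of $\PSL_m(\mathbb{F}_p)$, so such a quotient of $\SL_m(\mathbb{F}_p)$ has order $\geq\lvert\PSL_m(\mathbb{F}_p)\rvert=\exp(\Theta(m^{2}))$, yielding (iii); and, for $\SL_m(\mathbb{Z})$ with $m\geq3$, the congruence subgroup property, so the image is a quotient of some $\SL_m(\mathbb{Z}/N)$ in which $t_{a,b}(1)$ has order exactly $N$ and the doubling relations force $N$ large, yielding (up to $\approx$) the bound in (ii); in each case one substitutes $m=\gamma_\Gamma(n')$.

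The step I expect to be the main obstacle is producing the short presentations, since the textbook presentations are not directly usable — some generators or relators are long in $S'$, and rewriting the long transvections through the short ones is costly. This is where the ``relative presentations'' flagged in the abstract enter: one presents $\Sym(B_S(j))$, resp. $\SL_{\gamma_\Gamma(j)}(R)$, relative to the stage $\Sym(B_S(j-1))$, resp. $\SL_{\gamma_\Gamma(j-1)}(R)$, adding at each step only boundedly many relators of $S'$-length $O(j)$, resp. $O(j^{2})$, and telescopes over $j\leq n'$ to obtain the global presentation with all relators short. A further technical point, specific to (ii), is checking that the doubling relations genuinely force $N$ to be large and combining this cleanly with the congruence subgroup property (and noting that, if $\approx$ is read coarsely enough, the doubling relations may be dispensed with, the plain bound $\exp(\Theta(\gamma_\Gamma(n^{1/2})^{2}))$ already dominating the stated one).
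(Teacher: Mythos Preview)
Your overall scheme matches the paper's: upper bounds by transplanting into $\mathcal{S}(Q,X)$ or $\mathcal{E}(Q,X,R)$, and lower bounds via the ``finitely presented subgroup'' principle (the paper's Proposition~\ref{LERFRFsubgrp}) applied to $H_n=\Sym(B_S(n'))$ or $\SL_{B_S(n')}(R)$. For (i) and (iii) this is essentially the paper's argument, with one correction: the adjacent transpositions $(g,gs)$ are indexed by edges of a tree, not a line, so the Coxeter presentation does not apply; the paper invokes Solomon's tree presentation of $\Sym(V)$, which likewise has relators of length $\leq 8$ in the edge-transpositions. Your length estimate $O(n'^{2})$ for a general transvection is correct; your signed-permutation route works, though the paper instead uses the commutator recursion $E_{v,w}=[E_{v,x},E_{x,w}]$ with a midpoint $x$. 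Contrary to your last paragraph, the textbook presentations \emph{are} directly usable: with $S'_n=\{E_{v,w}:v,w\in B_S(n')\}$ and the Steinberg relators one gets $f(n)=O(n'^{2})$ and $g(n)=O(1)$, and no inductive ``relative'' construction is needed.

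The genuine gap is in (ii). Your doubling relations do not force the congruence level to be large: having $x^{2^0},\ldots,x^{2^J}$ pairwise distinct only forces the order of $x$ to exceed $J$, not $2^J$ (take $x$ of prime order $p$ with $2$ a primitive root mod $p$). Your parenthetical fallback, that $\exp(\gamma_\Gamma(n^{1/2})^2)$ already dominates $\exp(n\,\gamma_\Gamma(n^{1/2})^2)$ up to $\approx$, is false when $\gamma_\Gamma$ is polynomial. The paper's mechanism is different and is the main new idea in this part: it enlarges $S''_n$ to the full set $B_{S\cup T(\omega_0)}(Cn'^{2})\cap H_n$, which contains a ball of radius $\sim n'^{2}$ in a \emph{fixed} copy of $\SL_{V_2}(\mathbb{Z})\leq H_n$; exponential growth of $\SL_{V_2}(\mathbb{Z})$ then forces the minimal level $q$ to satisfy $q\geq\exp(c\,n'^{2})$. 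There is also a second step you have not addressed: knowing $q$ is large does not immediately yield $|Q|\geq q^{c|V_n|^2}$, since $Q$ may be a proper quotient of $\SL_{V_n}(\mathbb{Z}/q\mathbb{Z})$. The paper handles this via the normal-subgroup structure of $\SL_m(\mathbb{Z}/q\mathbb{Z})$ (Chinese Remainder, quasisimplicity of $\SL_m(\mathbb{F}_p)$, and a small number-theoretic trick choosing $W_n\subset V_n$ of prime cardinality coprime to a divisor $r\geq\sqrt{q}$), concluding that $Q$ contains an injective image of $\SL_{W_n}(\mathbb{Z}/r\mathbb{Z})$ with $|W_n|\geq|V_n|/4$.
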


Here $\gamma_{\Gamma}$ is the word growth function of $\Gamma$. 
In particular, if $\mathcal{L}_{\Gamma}$ is comparable to $\gamma_{\Gamma}$, 
then $\mathcal{L}_{\mathcal{S}(\Gamma,\Gamma)}$ is known. 
Examples of groups $\Gamma$ for which this is the case 
include all finitely generated abelian groups and all non-virtually-nilpotent 
finitely generated linear groups. 

The groups $\Delta (f)$ arising in Theorem \ref{SpectrumMainThm} 
shall have the form $\mathcal{S}(\Gamma,\Omega(f))$, 
for $\Gamma$ a nonabelian free group and $\Omega(f)$ 
an infinite set which we equip with a transitive LEF $\Gamma$-action 
such that the growth function of the corresponding Schreier graph 
is precisely $f$. Our class of ``permissable'' 
functions will be defined so as to facilitate the construction of this action. 

\subsection*{Background and methods}

Local embeddings of elementary and symmetric enrichments into finite groups 
have been exploited in the context of several problems in 
geometric group theory. 
Vershik and Gordon \cite{VerGor} considered the group 
$\mathcal{S}(\Gamma,\Gamma)$ in the case $\Gamma = \mathbb{Z}$, 
and showed that it is LEF. 
Later Elek and Szabo \cite{EleSza} used symmetric enrichments 
to construct the first examples of sofic (indeed LEF) groups 
which are not residually amenable. 
Properties of the local embeddings of elementary enrichments 
were studied by Mimura and Sako in connection with 
constructions of exotic box-spaces in coarse geometry \cite{MimSak}, 
and were used in \cite{Mimu} to exhibit dense subgroups of a common profinite group 
with radically different coarse properties. 

Upper bounds on the LEF growth of wreath products; 
symmetric enrichments and elementary enrichments 
follow from the same ``natural'' elementary constructions 
which witness that these extensions preserve LEF. 
That is to say, if $Q_{\Gamma}$ and $Q_{\Delta}$ are finite groups, 
admitting local embeddings 
from appropriate subsets of $\Gamma$ and $\Delta$, 
and $q \in \mathbb{N}$ is sufficiently large, 
then balls in $\Delta \wr \Gamma$, $\mathcal{S}(\Gamma)$, 
$\mathcal{E}(\Gamma,\mathbb{Z})$ 
and $\mathcal{E}(\Gamma,\mathbb{F}_p)$ 
admit local embeddings into $Q_{\Delta} \wr Q_{\Gamma}$, 
$\mathcal{S}(Q_{\Gamma})$, $\mathcal{E}(Q_{\Gamma},\mathbb{Z}/q\mathbb{Z})$ 
and $\mathcal{E}(Q_{\Gamma},\mathbb{F}_p)$ respectively 
(and similarly for the extensions corresponding to non-regular LEF actions). 

More challenging are lower bounds on LEF growth. 
Our approach here exploits the equivalence 
of LEF and residual finiteness within the class of \emph{finitely presented} groups: 
if $\Gamma$ is finitely presented, then $\mathcal{L}_{\Gamma}$ 
is equivalent to the \emph{full residual finiteness growth function} 
$\mathcal{R}_{\Gamma}$. 
The latter has been extensively studied, and good estimates are known 
for many families of groups. 
For LEF groups which are \emph{not} finitely presented 
(as most of the examples we consider will not be), 
this equivalence can still be useful when applied on subgroups. 
In other words, 
by identifying a sequence $(H_n)$ of finitely presented 
subgroups of a group $\Gamma$, 
we extract useful lower bounds on $\mathcal{L}_{\Gamma}$ 
from lower bounds on $\mathcal{R}_{H_n}$ (at various radii; 
see Proposition \ref{LERFRFsubgrp} for the 
relevant technical statement). 
In symmetric and elementary enrichments, 
the groups $H_n$ will be respectively of the form 
$\Sym(k_n)$ and $\SL_{k_n} (\mathbb{Z})$ or $\SL_{k_n} (\mathbb{F}_p)$ 
(for appropriate increasing sequences of positive integers $(k_n)$). 

For the sake of comparison with Theorem \ref{RegSummaryThm}, 
an upper bound on the LEF growth of a regular wreath product 
of two LEF groups was proved in \cite{ArzChe}, 
and in \cite{Brad} a complementary lower bound was given in a special case. 

\begin{thm}[\cite{Brad} Theorem 1.9] \label{WPmainthm}
Let $\Gamma$ be a finitely generated LEF group 
and let $\Delta$ be a finite nontrivial group with trivial centre. Then: 
\begin{equation*}
\exp \big( \gamma_{\Gamma} (n) \big) \preceq \mathcal{L}_{\Delta \wr_{\Gamma} \Gamma}(n) 
\preceq \exp \big( \mathcal{L}_{\Gamma}(n) \big)\text{.}
\end{equation*}
\end{thm}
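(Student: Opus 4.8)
The plan is to prove the two inequalities separately. The upper bound is the easy direction: it is exactly the ``natural'' finite model witnessing that $\Delta \wr_\Gamma \Gamma$ is LEF. Concretely, for a suitable constant $C$ I would take a finite group $Q_\Gamma$ of order $\mathcal{L}_\Gamma(Cn)$ carrying a local embedding of $B_{S_\Gamma}(Cn)$, equip it with its left regular action, and form $\Delta \wr_{Q_\Gamma} Q_\Gamma = \big( \bigoplus_{Q_\Gamma} \Delta \big) \rtimes Q_\Gamma$, of order $|\Delta|^{|Q_\Gamma|} |Q_\Gamma| \leq \exp\!\big( 2(\log|\Delta|)\, \mathcal{L}_\Gamma(Cn) \big)$; transporting lamp configurations along the given embedding (which is only ever invoked within $B_{S_\Gamma}(Cn)$, since the projection $\Delta \wr_\Gamma \Gamma \to \Gamma$ is $1$-Lipschitz) produces a local embedding of $B_S(n)$ into this finite group, giving $\mathcal{L}_{\Delta \wr_\Gamma \Gamma}(n) \preceq \exp(\mathcal{L}_\Gamma(n))$.

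For the lower bound I would work with the generating set $S = S_\Gamma \sqcup T$, where $T$ generates the copy $\Delta_e$ of $\Delta$ above the identity coset, and fix a constant $c = c(\Delta)$ bounding the $S$-length of elements of $\Delta_e$. Writing $\Delta_g := (e,g)\,\Delta_e\,(e,g)^{-1}$ for the ``lamps at $g$'' subgroup and $\delta_g$ for its typical element, the two elementary length estimates to record are: every element of $\Delta_g$ has $S$-length at most $2m+c$ when $g \in B_{S_\Gamma}(m)$, and for $g \neq h$ in $B_{S_\Gamma}(m)$ every element of $\langle \Delta_g, \Delta_h\rangle = \Delta_g\times\Delta_h$ has $S$-length at most $4m+2c$. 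So if $\phi \colon B_S(4m+2c) \to Q$ is any local embedding into a finite group, then for each $g \in B_{S_\Gamma}(m)$ the restriction $\phi|_{\Delta_g}$ is an injective homomorphism, whence $D_g := \phi(\Delta_g) \leq Q$ is isomorphic to $\Delta$ and in particular centreless; and for $g \neq h$ the subgroups $D_g$ and $D_h$ commute elementwise in $Q$, since for all $\delta, \delta' \in \Delta$ the elements $\delta_g$, $\delta'_h$ and $\delta_g\delta'_h = \delta'_h\delta_g$ all lie in $B_S(4m+2c)$.

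The remaining ingredient, and the one place where ``trivial centre'' genuinely enters, is the elementary fact that pairwise commuting centreless subgroups $D_1, \dots, D_k$ of any group generate their internal direct product: by induction $E := \langle D_1, \dots, D_{k-1}\rangle \cong D_1 \times \cdots \times D_{k-1}$ has trivial centre, and any $x \in E \cap D_k$ centralises $E$ while lying in $E$, so $x \in Z(E) = 1$, whence $ED_k \cong E \times D_k$. Applying this to $\{D_g : g \in B_{S_\Gamma}(m)\}$ exhibits a copy of $\Delta^{\gamma_\Gamma(m)}$ inside $Q$, so $|Q| \geq |\Delta|^{\gamma_\Gamma(m)} \geq \exp(\gamma_\Gamma(m))$ (using $|\Delta| \geq 6 > e$, which holds since $\Delta$ is nontrivial with trivial centre). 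As this bound is uniform over all finite models of $B_S(4m+2c)$, we obtain $\mathcal{L}_{\Delta \wr_\Gamma \Gamma}(4m+2c) \geq \exp(\gamma_\Gamma(m))$ for every $m$, i.e. $\exp(\gamma_\Gamma(n)) \preceq \mathcal{L}_{\Delta \wr_\Gamma \Gamma}(n)$. I expect the real obstacle to be exactly this final step: the naive approach would try to fit the entire direct power $\bigoplus_{B_{S_\Gamma}(m)} \Delta$ into a ball of radius $O(m)$, which is impossible as soon as $\Gamma$ grows superlinearly; the lemma on centreless subgroups is precisely what lets one get away with only the (short) generators of the $\Delta_g$ together with their (short) pairwise commutation relations and have all the higher relations supplied automatically.
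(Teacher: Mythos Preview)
Your proposal is correct and follows precisely the strategy the paper attributes to \cite{Brad} and \cite{ArzChe}. Note that the present paper does not itself prove Theorem~\ref{WPmainthm}: it is quoted from \cite{Brad}, with the upper bound credited to \cite{ArzChe} and the lower bound described only by the sentence ``being based on the fact that one can recognize locally whether a family of finite centreless subgroups generates their direct product.'' Your argument is exactly an unpacking of that sentence: the local embedding of $B_S(4m+2c)$ sees each $\Delta_g$ as a genuine subgroup $D_g\cong\Delta$ and sees all pairwise commutations, and the elementary lemma that pairwise-commuting centreless subgroups generate their internal direct product then forces $\lvert Q\rvert\geq\lvert\Delta\rvert^{\gamma_\Gamma(m)}$. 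The observation that $\lvert\Delta\rvert\geq 6$ for nontrivial centreless $\Delta$ is a nice touch to get the base of the exponential above~$e$. There is nothing to correct.
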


The lower bound in Theorem \ref{WPmainthm} 
uses different methods from the present paper, 
being based on the fact that one can recognize locally whether 
a family of finite centreless subgroups generates their direct product. 
We do not treat the case of the wreath product $\Delta \wr_{\Omega} \Gamma$ 
in detail here, although the methods of \cite{ArzChe} and \cite{Brad} would allow one 
to generalize the bounds of Theorem \ref{WPmainthm} to these groups 
for non-regular LEF actions, 
at least when $\Delta$ is finite centreless. 

The paper is structured as follows: 
in Section \ref{PrelimSect} detail basic properties of the LEF growth function
discuss the relationship between LEF growth and full residual 
finiteness growth, and connect this relationship 
to finite presentability. 
We also record some 
finite presentations of $\Sym(n)$, $\SL_n(\mathbb{Z})$ and $\SL_n(p)$, 
to be used subsequently. 
In Sections \ref{FinExtSect} and \ref{ElemExtSect} 
we prove our bounds on the LEF growth of 
symmetric enrichments and elementary enrichments of LEF groups, respectively. 
In Section \ref{SpectrumSect}, we prove Theorem \ref{SpectrumMainThm}.

\subsection*{Basic notation and terminology}

An action of a group $\Gamma$ on a set $\Omega$ will be denoted 
$\Gamma \curvearrowright \Omega$. 
Unless specified all our actions will be on the right. 
For $S \subseteq \Gamma$ the associated \emph{Schreier graph} 
$\Schr (\Gamma,\Omega,S)$ is the directed graph with vertices $\Omega$ 
and edges $\Omega \times S$, with 
$\iota (\omega , s) = \omega$ and $\tau (\omega , s)=\omega s$. 
We regard the edge $(\omega , s)$ as being labelled by the element $s\in S$. 
For $n \in \mathbb{N}$ and $\omega \in \Omega$, 
$B_{S,\omega}(n) \subseteq \Omega$ denotes the closed ball of radius 
$n$ around $\omega$ in the path-metric on $\Schr (\Gamma,\Omega,S)$. 
We shall also denote by $B_{S,\omega}(n)$ 
the induced subgraph of $\Schr (\Gamma,\Omega,S)$ on this set. 
In the special case of $\Omega = \Gamma$ with the regular action, 
$\Schr (\Gamma,\Omega,S)=\Cay(\Gamma,S)$ is the \emph{Cayley graph} 
and we write $B_S(n)$ for $B_{S,1}(n)$. 

\section{Preliminaries} \label{PrelimSect}

This Section summarises much of the basic material which was treated 
in greater detail in \cite{Brad}[Section 2]. 
There is also signicicant overlap with the relevant parts of \cite{ArzChe}. 

\subsection{Definition and first properties}

\begin{defn}
For $\Gamma,\Delta$ groups and $F \subseteq \Gamma$, 
a \emph{partial homomorphism} of $F$ into $\Delta$ is a 
function $\pi : F \rightarrow \Delta$ 
such that, for all $g,h \in F$, if $gh \in F$, 
then $\pi(gh)=\pi(g)\pi(h)$. 
$\pi$ is called a \emph{local embedding} if it is injective. 
$\Gamma$ is \emph{locally embeddable into finite groups (LEF)} 
if, for all finite $F \subseteq \Gamma$, 
there exists a finite group $Q$ and a local embedding of 
$F$ into $Q$. 
\end{defn}

\begin{rmrk} \label{restrmrk}
If $F^{\prime} \subseteq F \subseteq \Gamma$ 
and $\pi : F \rightarrow \Delta$ is a local embedding, 
then so is $\pi \mid_{F^{\prime}} : F^{\prime} \rightarrow \Delta$. 
\end{rmrk}

Henceforth suppose that $\Gamma$ is generated by the finite set $S$. 
\begin{defn}

The \emph{LEF growth} of $\Gamma$ (with respect to $S$) is: 
\begin{center}
$\mathcal{L}_{\Gamma} ^S (n) = \min \lbrace \lvert Q \rvert 
: \exists \pi :B_S(n)\rightarrow Q\text{ a local embedding} \rbrace$
\end{center}
(with $\mathcal{L}_{\Gamma} ^S (n) = \infty$ 
if the set on the right-hand side is empty). 
\end{defn}

\begin{defn} \label{comparisondefn}
\normalfont
Given functions $f_1 , f_2 : (0,\infty) \rightarrow (0,\infty)$ 
we write $f_1 \preceq f_2$ if there exists a constant $C>0$ 
such that $f_1(x) \leq f_2 (Cx)$ for all $x \in (0,\infty)$. 
We say \emph{$f_1$ is equivalent to $f_2$} and write 
$f_1 \approx f_2$ if $f_1 \preceq f_2$ 
and $f_2 \preceq f_1$. 
We may also compare functions which are defined only on 
$\mathbb{N}$ under $\preceq$ and $\approx$: 
for this purpose $f : \mathbb{N}\rightarrow (0,\infty)$ 
may be extended to $(0,\infty)$ by declaring 
$f$ to be constant on each interval $(n,n+1]$. 
\end{defn}

\begin{lem} \label{subgrplem}
Let $\Delta \leq \Gamma$ be finitely generated by $S^{\prime}$. 
Let $M \in \mathbb{N}$ be such that $S^{\prime} \subseteq B_S (M)$. 
Then for all $n$, $\mathcal{L}_{\Delta} ^{S^{\prime}} (n) \leq \mathcal{L}_{\Gamma} ^S (Mn)$. 
In particular $\mathcal{L}_{\Delta} ^{S^{\prime}} \preceq \mathcal{L}_{\Gamma} ^S$. 
\end{lem}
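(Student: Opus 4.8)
The plan is to compare balls in the two word metrics and then exploit the restriction stability of local embeddings. First I would observe that the hypothesis $S^{\prime} \subseteq B_S (M)$ (together with the symmetry of balls, so that $(S^{\prime})^{-1} \subseteq B_S(M)$ as well) means: every element of $\Delta$ which is a product of at most $n$ elements of $S^{\prime} \cup (S^{\prime})^{-1}$ can be rewritten as a product of at most $Mn$ elements of $S \cup S^{-1}$, since each of the $\le n$ letters expands to a word of length $\le M$ in $S$. Identifying $\Delta$ with a subgroup of $\Gamma$, this gives the inclusion of finite subsets $B_{S^{\prime}}(n) \subseteq B_S(Mn)$ inside $\Gamma$.

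Next, if $\mathcal{L}_{\Gamma}^S (Mn) = \infty$ there is nothing to prove, so assume it is finite and pick a finite group $Q$ with $\lvert Q \rvert = \mathcal{L}_{\Gamma}^S(Mn)$ together with a local embedding $\pi : B_S(Mn) \rightarrow Q$. By Remark \ref{restrmrk}, the restriction $\pi \mid_{B_{S^{\prime}}(n)} : B_{S^{\prime}}(n) \rightarrow Q$ is again a local embedding (the partial multiplication on $B_{S^{\prime}}(n)$ is literally the restriction of that on $B_S(Mn)$, since $\Delta \leq \Gamma$). Hence $B_{S^{\prime}}(n)$ admits a local embedding into a group of order $\lvert Q \rvert$, so $\mathcal{L}_{\Delta}^{S^{\prime}}(n) \leq \lvert Q \rvert = \mathcal{L}_{\Gamma}^S(Mn)$, which is the claimed inequality.

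Finally, the assertion $\mathcal{L}_{\Delta}^{S^{\prime}} \preceq \mathcal{L}_{\Gamma}^S$ follows immediately from the inequality just proved, taking $C = M$ in Definition \ref{comparisondefn} (and recalling the extension convention that makes these functions defined on $(0,\infty)$). I do not expect any genuine obstacle here: the argument is entirely elementary, and the only points that merit a moment's care are the word-length bookkeeping underlying the inclusion $B_{S^{\prime}}(n) \subseteq B_S(Mn)$ and the harmless case in which $B_S(Mn)$ has no finite model at all.
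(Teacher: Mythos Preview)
Your argument is correct and is the standard one: the inclusion $B_{S'}(n)\subseteq B_S(Mn)$ followed by restriction of a local embedding (Remark~\ref{restrmrk}) immediately gives the inequality, and the $\preceq$-statement is then Definition~\ref{comparisondefn} with $C=M$. The paper itself does not supply a proof of this lemma---it is recorded in Section~\ref{PrelimSect} as part of the basic material imported from \cite{Brad}---but your write-up is exactly the elementary verification one expects, and there is nothing to add.
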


Consequently, the LEF growth only depends on a choice of 
generating set up to equivalence. 

\begin{coroll}
Let $S,S^{\prime} \subseteq \Gamma$ be finite generating sets. 
Let $L,M \in \mathbb{N}$ be such that $S^{\prime} \subseteq B_S (L)$ 
and $S \subseteq B_{S^{\prime}} (M)$. 
Then: 
\begin{center}
$\mathcal{L}_{\Gamma} ^S (n) \leq \mathcal{L}_{\Gamma} ^{S^{\prime}} (Mn) \leq \mathcal{L}_{\Gamma} ^S (LMn)$. 
\end{center}
In particular 
$\mathcal{L}_{\Gamma} ^S \approx \mathcal{L}_{\Gamma} ^{S^{\prime}}$. 
\end{coroll}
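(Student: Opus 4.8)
The plan is to deduce this Corollary directly from Lemma \ref{subgrplem} by applying it twice, with the roles of the two generating sets swapped. First I would apply Lemma \ref{subgrplem} with $\Delta = \Gamma$, the ambient generating set being $S$ and the subgroup generating set being $S'$. Since $\Gamma = \Gamma$ is (trivially) finitely generated by $S'$, and since $S' \subseteq B_S(L)$ by hypothesis, the Lemma gives $\mathcal{L}_{\Gamma}^{S'}(n) \leq \mathcal{L}_{\Gamma}^{S}(Ln)$ for all $n$. Equivalently, replacing $n$ by $Mn$, we get $\mathcal{L}_{\Gamma}^{S'}(Mn) \leq \mathcal{L}_{\Gamma}^{S}(LMn)$, which is the second inequality in the displayed chain.

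Next I would apply Lemma \ref{subgrplem} again, this time with the roles reversed: take the ambient group to be $\Gamma$ with generating set $S'$, and the subgroup to be $\Gamma$ itself, finitely generated by $S$. The hypothesis $S \subseteq B_{S'}(M)$ is exactly what is needed, so the Lemma yields $\mathcal{L}_{\Gamma}^{S}(n) \leq \mathcal{L}_{\Gamma}^{S'}(Mn)$ for all $n$, which is the first inequality. Chaining the two inequalities gives
\begin{center}
$\mathcal{L}_{\Gamma}^{S}(n) \leq \mathcal{L}_{\Gamma}^{S'}(Mn) \leq \mathcal{L}_{\Gamma}^{S}(LMn)$,
\end{center}
as claimed. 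The ``in particular'' clause then follows immediately: the first inequality says $\mathcal{L}_{\Gamma}^{S} \preceq \mathcal{L}_{\Gamma}^{S'}$ (with constant $M$ in Definition \ref{comparisondefn}), and by symmetry (interchanging the roles of $S$ and $S'$, noting that the hypotheses are symmetric in form) one also gets $\mathcal{L}_{\Gamma}^{S'} \preceq \mathcal{L}_{\Gamma}^{S}$, hence $\mathcal{L}_{\Gamma}^{S} \approx \mathcal{L}_{\Gamma}^{S'}$.

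There is essentially no obstacle here: the statement is a formal bookkeeping consequence of Lemma \ref{subgrplem}, and the only thing to be careful about is the direction of the substitution $n \mapsto Mn$ and making sure the constants $L$, $M$ are assigned to the correct application of the Lemma. One should double-check that both generating sets are genuinely finite (which is assumed) so that the word metrics and hence the LEF growth functions are well-defined, and that the monotonicity of $\mathcal{L}_{\Gamma}^{S}$ in $n$ — implicit in Remark \ref{restrmrk}, since a local embedding of $B_S(Ln)$ restricts to one of $B_S(n)$ — is what licenses passing from $\mathcal{L}_{\Gamma}^{S'}(n) \leq \mathcal{L}_{\Gamma}^{S}(Ln)$ to the shifted inequality at argument $Mn$.
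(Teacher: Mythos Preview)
Your proposal is correct and is exactly the intended derivation: the paper states this corollary immediately after Lemma~\ref{subgrplem} without proof, and your two applications of that lemma (with the roles of $S$ and $S'$ swapped) are precisely what is meant. One small remark: your final appeal to monotonicity is unnecessary---the inequality $\mathcal{L}_{\Gamma}^{S'}(n) \leq \mathcal{L}_{\Gamma}^{S}(Ln)$ holds for \emph{all} $n$, so you may simply instantiate it at $n \mapsto Mn$ without invoking anything further.
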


\begin{defn} \label{LEFactiondefn}
For $\Gamma \curvearrowright \Omega$ and $\Delta \curvearrowright X$ 
group actions and $F \subseteq \Gamma$ and $\Sigma \subseteq \Omega$ 
finite subsets, a \emph{local embedding} of $(F,\Sigma)$ 
into $(\Delta,X)$ is a pair $(\pi,\theta)$ of a local embedding 
$\pi : F \rightarrow \Delta$ and an injective map 
$\theta : \Sigma \rightarrow X$ satisfying: 
\begin{equation} \label{LEFactioneqn}
\text{for all } g \in F \text{ and } \omega \in \Sigma, 
\text{ if } \omega g\in\Sigma \text{ then } \theta(\omega g)=\theta(\omega)\pi(g). 
\end{equation}
The group action $\Gamma \curvearrowright \Omega$ is 
\emph{locally embeddable into finite actions (LEF)} if, 
for all finite subsets 
$F \subseteq \Gamma$ and $\Sigma \subseteq \Omega$, 
there exists a finite action $Q \curvearrowright X$ 
and a local embedding of $(F,\Sigma)$ into $(Q,X)$. 
\end{defn}

\begin{ex}
If $F \subseteq \Gamma$ and $\pi : F \rightarrow \Delta$ 
is a local embedding, then $(\pi,\pi)$ 
is a local embedding of $(F,F)$ into $(\Delta,\Delta)$ 
(with respect to the regular actions 
$\Gamma \curvearrowright \Gamma$ and 
$\Delta \curvearrowright \Delta$). 
As such, if $\Gamma$ is a LEF group, 
then the regular action $\Gamma \curvearrowright \Gamma$ 
is a LEF action. 

More generally, if $H \leq \Gamma$ is closed in the profinite 
topology on $\Gamma$ and has trivial normal core, 
then for any finite $F \subseteq \Gamma$ and 
$\Sigma \subseteq H\backslash \Gamma$, 
there exists $H \leq \overline{H} \leq \Gamma$, 
with $\lvert \Gamma:H\rvert < \infty$, 
such that the natural maps 
$\Gamma\rightarrow \Sym(\overline{H}\backslash \Gamma)$ 
and $H\backslash \Gamma\rightarrow \overline{H}\backslash \Gamma$ 
induce a local embedding of $(F,\Sigma)$ into 
$(\Sym(\overline{H}\backslash \Gamma),\overline{H}\backslash \Gamma)$. 
Thus $\Gamma \curvearrowright H\backslash \Gamma$ is 
a LEF action. 
\end{ex}

\begin{ex} \label{LERFex}
Suppose $\Gamma$ is a split extension with kernel 
$K$ and quotient $Q$. 
If $\Gamma$ is LEF then the action of 
$Q$ on $N$ by conjugation is LEF. 
\end{ex}

A further interesting construction of LEF actions will be given in 
Remark \ref{SymEnrRmrks} (ii) below. 

\subsection{LEF, residual finiteness and finite presentability}

Let $\Gamma$ be a finitely generated group and 
$S \subseteq \Gamma$ be a finite generating set. 
Recall that $\Gamma$ is \emph{residually finite (RF)} if, 
for every finite subset $F \subset \Gamma$, 
there exists a finite group $Q$ and a homomorphism 
$\pi : \Gamma \rightarrow Q$ such that the restriction 
$\pi |_F$ of $\pi$ to $F$ is injective. 
Following \cite{BoRaMcR}, one may quantify this definition as follows. 

\begin{defn}
The \emph{(full) RF growth} of $\Gamma$ is: 
\begin{center}
$\mathcal{R}_{\Gamma} ^S (n) 
= \min \lbrace \lvert F \rvert 
: \exists \pi : \Gamma \rightarrow F \text{ s.t. } 
\pi |_{B_S(n)} \text{ is injective} \rbrace$. 
\end{center}
\end{defn}

The next observation is almost immediate from these definitions. 

\begin{lem} 
If $\Gamma$ is residually finite, then it is LEF, and
$\mathcal{L}_{\Gamma} ^S (n) 
\leq \mathcal{R}_{\Gamma} ^S (n)$ for all $n$. 
\end{lem}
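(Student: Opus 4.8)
The plan is to prove both the implication ``RF $\Rightarrow$ LEF'' and the inequality $\mathcal{L}_{\Gamma}^S(n) \leq \mathcal{R}_{\Gamma}^S(n)$ simultaneously, since a single construction yields both. First I would unwind the definition of residual finiteness in its quantified form: given $n \in \mathbb{N}$, let $Q$ be a finite group realizing $\mathcal{R}_{\Gamma}^S(n)$, so $|Q| = \mathcal{R}_{\Gamma}^S(n)$ and there is a homomorphism $\pi : \Gamma \rightarrow Q$ whose restriction to $B_S(n)$ is injective. (If $\Gamma$ is residually finite the set defining $\mathcal{R}_{\Gamma}^S(n)$ is nonempty, so such a $Q$ exists; if $\Gamma$ is not residually finite the inequality $\mathcal{L}_{\Gamma}^S(n) \leq \mathcal{R}_{\Gamma}^S(n) = \infty$ is vacuous.)

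The key observation is that the restriction $\pi|_{B_S(n)} : B_S(n) \rightarrow Q$ is a local embedding in the sense of the first Definition. It is injective by the choice of $\pi$. It is a partial homomorphism because $\pi$ is an honest homomorphism: for $g, h \in B_S(n)$ with $gh \in B_S(n)$ we have $\pi(gh) = \pi(g)\pi(h)$ — indeed this holds for \emph{all} $g,h \in \Gamma$, a fortiori on $B_S(n)$. Hence $\pi|_{B_S(n)}$ witnesses that the set $\{ |Q| : \exists\, \pi' : B_S(n) \rightarrow Q \text{ a local embedding} \}$ contains $|Q| = \mathcal{R}_{\Gamma}^S(n)$, so its minimum $\mathcal{L}_{\Gamma}^S(n)$ is at most $\mathcal{R}_{\Gamma}^S(n)$.

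Finally, to conclude that $\Gamma$ is LEF one argues that every finite subset $F \subseteq \Gamma$ admits a local embedding into a finite group: choose $n$ large enough that $F \subseteq B_S(n)$ (possible since $S$ generates $\Gamma$ and $F$ is finite), take the homomorphism $\pi : \Gamma \rightarrow Q$ provided by residual finiteness with $\pi|_{B_S(n)}$ injective, and restrict to $F$; by Remark \ref{restrmrk} the restriction $\pi|_F$ is again a local embedding. Since this works for every finite $F$, $\Gamma$ is LEF.

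There is no real obstacle here — the lemma is, as the text says, ``almost immediate from the definitions,'' and the only subtlety is the bookkeeping convention that $\mathcal{R}_{\Gamma}^S(n) = \infty$ (resp. $\mathcal{L}_{\Gamma}^S(n) = \infty$) when the relevant set of candidate orders is empty, which makes the stated inequality hold trivially in the non-residually-finite case and correctly captures that LEF is the weaker property. The one point worth stating cleanly is that the key implication ``homomorphism restricts to partial homomorphism'' requires no hypothesis on how $F$ or $B_S(n)$ sits inside $\Gamma$; it is purely formal.
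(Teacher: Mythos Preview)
Your proof is correct and is exactly the natural argument; the paper itself gives no proof of this lemma beyond the remark that it is ``almost immediate from these definitions,'' and your write-up simply makes that immediacy explicit.
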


Among finitely \emph{presented} groups we have a converse inequality. 

\begin{propn} \label{LEFRFpropn}
If $\Gamma$ is finitely presented and LEF then it is residually finite. 
Moreover, if $\Gamma \cong \langle S \mid R \rangle$ 
is a finite presentation with $R \subseteq F(S)$ 
consisting of reduced words of length at most $n_0$, 
and $\mathcal{C}$ is a class of finite groups, 
then: $\mathcal{R}_{\Gamma,\mathcal{C}} ^S (n) 
= \mathcal{L}_{\Gamma,\mathcal{C}} ^S (n)$
for all $n \geq n_0$. 
\end{propn}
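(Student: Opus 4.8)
The plan is to prove both assertions by exploiting the rigidity of a finite presentation: a local embedding of a large enough ball must respect all the defining relations, hence actually factors through a genuine homomorphism from $\Gamma$.

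First I would set up the key observation. Suppose $\Gamma \cong \langle S \mid R \rangle$ with every $r \in R$ a reduced word of length at most $n_0$, and let $n \geq n_0$. Take any finite group $Q$ and any local embedding $\pi : B_S(n) \to Q$. Since $S \cup \{1\} \subseteq B_S(1) \subseteq B_S(n)$, I can form the homomorphism $\tilde{\pi} : F(S) \to Q$ determined by $s \mapsto \pi(s)$ for $s \in S$. The claim is that $\tilde{\pi}$ kills every $r \in R$, so that it descends to a homomorphism $\bar{\pi} : \Gamma \to Q$. To see this, write $r = s_1^{\epsilon_1} \cdots s_k^{\epsilon_k}$ with $k \leq n_0 \leq n$; every prefix $s_1^{\epsilon_1} \cdots s_j^{\epsilon_j}$ lies in $B_S(n)$, and each single letter $s_j^{\epsilon_j} \in B_S(1)$, so by repeated application of the partial-homomorphism property along these prefixes, $\tilde{\pi}(r) = \pi(s_1^{\epsilon_1}) \cdots \pi(s_k^{\epsilon_k})$ equals $\pi$ evaluated on the product, which in $\Gamma$ is $1$; since $\pi$ is a local embedding and $\pi(1)=1$, we get $\tilde{\pi}(r)=1$. (One small care point: the intermediate products should be taken as elements of $\Gamma$ represented by the corresponding prefixes, and one checks inductively that $\tilde\pi$ agrees with $\pi$ on all these elements of $B_S(n)$ — this is a routine induction on word length, valid precisely because all prefixes have length $\leq n$.) Thus $\bar\pi : \Gamma \to Q$ is a well-defined homomorphism, and $\bar\pi |_{B_S(n)} = \pi$ is injective.

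From this the two conclusions follow quickly. For residual finiteness: LEF gives, for each $n$, a finite $Q$ with a local embedding $B_S(n) \to Q$; taking $n \geq n_0$ and applying the above yields a homomorphism $\Gamma \to Q$ injective on $B_S(n)$. Since the balls $B_S(n)$ exhaust $\Gamma$, every finite subset is separated by some such homomorphism, so $\Gamma$ is RF. For the growth equality, restrict attention to the class $\mathcal{C}$: one inequality $\mathcal{L}_{\Gamma,\mathcal{C}}^S(n) \leq \mathcal{R}_{\Gamma,\mathcal{C}}^S(n)$ is the (relativized) content of the lemma preceding the proposition — any $\mathcal{C}$-homomorphism injective on $B_S(n)$ restricts to a local embedding of $B_S(n)$ into a $\mathcal{C}$-group of the same order. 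Conversely, for $n \geq n_0$, a minimal-order local embedding $B_S(n) \to Q$ with $Q \in \mathcal{C}$ extends by the argument above to a homomorphism $\Gamma \to Q$ injective on $B_S(n)$, witnessing $\mathcal{R}_{\Gamma,\mathcal{C}}^S(n) \leq |Q| = \mathcal{L}_{\Gamma,\mathcal{C}}^S(n)$. Combining gives equality for all $n \geq n_0$.

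The only genuinely delicate point — and the step I would write out most carefully — is the inductive verification that $\tilde\pi$ extends $\pi$ on the relevant prefixes, i.e. that partial-homomorphism-along-a-path really does let us evaluate a length-$\leq n$ word letter by letter inside $B_S(n)$. Everything hinges on the hypothesis $n \geq n_0$ ensuring that no prefix of a relator escapes the ball on which $\pi$ is controlled; without it the relator need not be respected. Beyond that, the proof is essentially bookkeeping: the content is entirely in the rigidity observation, and the statement about the class $\mathcal{C}$ requires nothing new since the construction manifestly keeps us inside $\mathcal{C}$.
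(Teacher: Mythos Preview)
The paper does not actually give a proof of this proposition (it is part of the background material summarised from \cite{Brad}), but your argument is correct and is exactly the standard one. Indeed, the same mechanism --- a local embedding of a ball containing all relators sends each relator to $e_Q$, hence $\pi|_S$ extends to a genuine homomorphism agreeing with $\pi$ on the ball --- is precisely what the paper invokes (tersely) in its proof of the relative version, Proposition~\ref{LERFRFsubgrp}.
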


\begin{coroll} 
Suppose $\mathcal{R}_{\Gamma} ^S (n) \neq  \mathcal{L}_{\Gamma} ^S (n)$ for infinitely many $n$. Then $\Gamma$ is not 
finitely presentable. 
\end{coroll}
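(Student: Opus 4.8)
The plan is to prove the contrapositive: assuming $\Gamma$ is finitely presentable, I will show that $\mathcal{R}_{\Gamma}^S(n) = \mathcal{L}_{\Gamma}^S(n)$ for all sufficiently large $n$, which is incompatible with the two functions differing for infinitely many $n$. The argument divides into two cases according to whether $\Gamma$ is LEF.

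In the first case, suppose $\Gamma$ is finitely presentable and LEF. Fix a finite presentation $\Gamma \cong \langle S \mid R \rangle$ in which every relator is a reduced word of length at most $n_0$. Then Proposition \ref{LEFRFpropn}, applied with $\mathcal{C}$ the class of all finite groups, yields directly that $\Gamma$ is residually finite and that $\mathcal{R}_{\Gamma}^S(n) = \mathcal{L}_{\Gamma}^S(n)$ for every $n \geq n_0$, which already contradicts the hypothesis.

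In the second case, suppose $\Gamma$ is finitely presentable but not LEF. Pick a finite subset $F \subseteq \Gamma$ admitting no local embedding into a finite group, and choose $n_1$ with $F \subseteq B_S(n_1)$. By Remark \ref{restrmrk}, any local embedding of $B_S(n)$ into a finite group would restrict to one of $F$; hence no such embedding exists and $\mathcal{L}_{\Gamma}^S(n) = \infty$ for all $n \geq n_1$. Since residual finiteness implies LEF, $\Gamma$ is also not residually finite, and an analogous restriction argument (a homomorphism to a finite group injective on $B_S(n)$ is injective on every smaller ball) forces $\mathcal{R}_{\Gamma}^S(n) = \infty$ for all sufficiently large $n$. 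So again $\mathcal{R}_{\Gamma}^S$ and $\mathcal{L}_{\Gamma}^S$ agree — both equal to $\infty$ — for all large $n$, contradicting the hypothesis.

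I do not anticipate a genuine obstacle here: essentially all the content is already packaged in Proposition \ref{LEFRFpropn}. The only point needing a little care is the degenerate non-LEF case, where one should note that both growth functions are nondecreasing and eventually take the value $\infty$, rather than attempting to compare finite values as in the LEF case.
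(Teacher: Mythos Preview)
Your proof is correct and follows the approach the paper intends: the corollary is stated without proof in the paper, as an immediate consequence of Proposition \ref{LEFRFpropn}. Your Case 1 is exactly that application, and your Case 2 is a legitimate (and necessary) observation that in the non-LEF situation both $\mathcal{L}_{\Gamma}^{S}$ and $\mathcal{R}_{\Gamma}^{S}$ are eventually $\infty$, so the hypothesis cannot hold; the paper leaves this degenerate case implicit.
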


The following ``relative'' variant of Proposition \ref{LEFRFpropn} 
will be one of our main tools 
for proving non-trivial lower bounds on LEF growth in examples. 

\begin{propn} \label{LERFRFsubgrp}
Let $\Gamma$ be finitely generated by $S$. 
For $n \in \mathbb{N}$ let $H_n \leq \Gamma$, 
and let $S^{\prime} _n \subseteq S^{\prime \prime} _n 
\subseteq H_n$ be finite subsets. 
Suppose that $H_n$ is finitely presented 
by $\langle S^{\prime} _n \mid R_n \rangle$, 
and let $f,g : \mathbb{N} \rightarrow \mathbb{N}$ 
be such that for all $n \in \mathbb{N}$: 
\begin{align*}
S^{\prime \prime} _n & \subseteq B_S (f(n));  \\
R_n & \subseteq B_{S^{\prime} _n} (g(n))\text{ (in $F(S^{\prime} _n)$).} 
\end{align*}
Then $\mathcal{L}_{\Gamma} ^S (f(n)g(n)) 
\geq \mathcal{R}_{H_n} ^{S^{\prime\prime} _n} (g(n))$.
\end{propn}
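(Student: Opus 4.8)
The plan is to produce, for each $n$, a finite group $Q$ witnessing the value $\mathcal{R}_{H_n}^{S''_n}(g(n))$ and then to \emph{transfer} it to a local embedding of $B_S(f(n)g(n)) \subseteq \Gamma$ into $Q$, whence $\mathcal{L}_\Gamma^S(f(n)g(n)) \leq \lvert Q\rvert$. First I would fix $n$ and abbreviate $H = H_n$, $S' = S'_n$, $S'' = S''_n$, $R = R_n$, and take $\pi : H \rightarrow Q$ a homomorphism onto a finite group with $\pi\mid_{B_{S''}(g(n))}$ injective and $\lvert Q \rvert = \mathcal{R}_{H}^{S''}(g(n))$. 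The key point is that $H$ is finitely presented by $\langle S' \mid R\rangle$ with relators of length at most $g(n)$ in $F(S')$, so by Proposition \ref{LEFRFpropn} (applied internally to $H$, with $\mathcal{C}$ the class of all finite groups) this homomorphism $\pi$ is essentially the same data as a local embedding of $B_{S'}(g(n)) \subseteq H$ into $Q$ — indeed the hypothesis $R \subseteq B_{S'}(g(n))$ is exactly the condition under which $\mathcal{R}_H^{S'}$ and $\mathcal{L}_H^{S'}$ agree at radius $g(n)$. So the first step is to record that $\pi$ restricts to a local embedding on an appropriately large ball of $H$ measured in the word metric of $S'$.

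Next I would convert radii measured in $S'$ to radii measured in $S$. Since $S' \subseteq S'' \subseteq B_S(f(n))$, every element of $B_{S'}(g(n))$ lies in $B_S(f(n)g(n))$; conversely every element of $B_S(f(n)g(n))$ that happens to lie in $H$ need \emph{not} lie in $B_{S'}(g(n))$, so I must be a little careful about which ball I transfer. The clean route is: $B_{S''}(g(n)) \subseteq B_S(f(n)g(n))$ (again because $S'' \subseteq B_S(f(n))$), and $\pi$ is injective on $B_{S''}(g(n))$ by choice; moreover, since the presentation $\langle S' \mid R\rangle$ has short relators, $\pi$ (viewed as a function on all of $H$) is in fact a local embedding when restricted to $B_{S'}(g(n)) \supseteq S''$ — but the cleanest statement to carry forward is just that $\pi\mid_{B_{S''}(g(n))}$ is an injective partial homomorphism into $Q$. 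Now I would set $F := B_S(f(n)g(n)) \cap H$ — no: rather, I take $F := B_{S''}(g(n))$ directly as a subset of $\Gamma$, observe $F \subseteq B_S(f(n)g(n))$, and note that $\pi\mid_F$ is a local embedding of $F$ into $Q$. Then by Remark \ref{restrmrk} the restriction of any local embedding of $B_S(f(n)g(n))$ to $F$ is a local embedding — but we want the other direction, bounding $\mathcal{L}_\Gamma$ from below, so the logic is: we have exhibited a finite group $Q$ receiving a local embedding of a ball in $\Gamma$ only if that ball \emph{is} $F$; since in general $F \subsetneq B_S(f(n)g(n))$, I cannot directly conclude $\mathcal{L}_\Gamma^S(f(n)g(n)) \leq \lvert Q\rvert$ this way.

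The resolution — and this is the crux — is that $\mathcal{L}_\Gamma^S(f(n)g(n))$ is defined as a \emph{minimum} over finite groups admitting a local embedding of $B_S(f(n)g(n))$, and a lower bound on it must come from showing \emph{every} such group is large. So the argument must run in reverse: suppose $\psi : B_S(f(n)g(n)) \rightarrow Q_0$ is \emph{any} local embedding into a finite group $Q_0$. Restricting $\psi$ to $F = B_{S''}(g(n)) \subseteq B_S(f(n)g(n))$ gives, by Remark \ref{restrmrk}, a local embedding $\psi\mid_F : F \rightarrow Q_0$. Because the subgroup of $Q_0$ generated by $\psi(S')$ is a finite group in which all the (short, length $\leq g(n)$) relators $R$ evaluate trivially — one checks this using that $\psi$ is a partial homomorphism and each relator, being a word of length $\leq g(n)$ in $S' \subseteq B_S(f(n))$, has all its partial products lying inside $B_S(f(n)g(n))$ where $\psi$ behaves multiplicatively — there is an induced homomorphism $H = \langle S' \mid R\rangle \rightarrow Q_0$ extending $\psi\mid_{S'}$, and it is injective on $B_{S''}(g(n))$ because $\psi$ is. Hence $\lvert Q_0 \rvert \geq \mathcal{R}_H^{S''}(g(n))$. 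Taking the minimum over all such $Q_0$ yields $\mathcal{L}_\Gamma^S(f(n)g(n)) \geq \mathcal{R}_H^{S''}(g(n))$, as desired. The main obstacle is precisely this bookkeeping step: verifying that the partial homomorphism $\psi$ on $B_S(f(n)g(n))$ descends to an honest homomorphism out of the abstract group $H$, which requires checking that every relator of length $\leq g(n)$ in the generators $S' \subseteq B_S(f(n))$ has all of its prefixes inside the ball $B_S(f(n)g(n))$ on which $\psi$ is multiplicative — a routine but essential word-length estimate that is the reason the bound appears at radius $f(n)g(n)$ rather than $f(n)+g(n)$ or $\max(f(n),g(n))$.
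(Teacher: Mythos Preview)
Your third paragraph is exactly the paper's proof: given any local embedding $\psi : B_S(f(n)g(n)) \to Q$, the relators $R_n$ (being words of length $\le g(n)$ in $S'_n \subseteq B_S(f(n))$, hence with all prefixes in the ball) map to $e_Q$, so $\psi|_{S'_n}$ extends to a homomorphism $H_n \to Q$ that agrees with $\psi$ on $B_{S''_n}(g(n)) \subseteq B_S(f(n)g(n))$ and is therefore injective there, giving $|Q| \ge \mathcal{R}_{H_n}^{S''_n}(g(n))$. Your first two paragraphs are a detour in the wrong direction (trying to push a witness for $\mathcal{R}_{H_n}$ forward to one for $\mathcal{L}_\Gamma$), which you correctly diagnose and abandon.
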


\begin{proof}
Viewing $S^{\prime} _n$ as a subset of $F(S)$, 
we have $R_n \subseteq B_S (f(n)g(n))$. 
Let $Q$ be a finite group and 
$\pi : B_S (f(n)g(n)) \hookrightarrow Q$ 
be a local embedding. 
Then since $e_{\Gamma} \in B_S (f(n)g(n))$, 
$\pi(R_n) = \lbrace e_Q \rbrace$, 
so $\pi \mid_{S^{\prime} _n}$ extends to a homomorphism 
$\tilde{\pi} : H_n \rightarrow Q$, 
which agrees with $\pi$ on 
$B_{S^{\prime\prime} _n} (g(n)) \subseteq B_S (f(n)g(n))$. 
In particular, the restriction of $\tilde{\pi}$ 
to $B_{S^{\prime\prime} _n} (g(n))$ is injective, 
so $\lvert Q \rvert \geq \mathcal{R}_{H_n} ^{S^{\prime\prime} _n} (g(n))$. 
\end{proof}

\subsection{LEF growth and growth of Schreier graphs}

Let $\Gamma$ be finitely generated by $S$, 
and suppose $\Gamma \curvearrowright \Omega$. 
For $\omega_0 \in \Omega$, the \emph{growth function} 
$\gamma_{\Gamma,\Omega} ^{S,\omega_0} : \mathbb{N}\rightarrow \mathbb{N}$ 
of the associated Schreier graph $\Schr(\Gamma,\Omega,S)$ 
(with respect to $S$ and $\omega_0$)
is given by 
$\gamma_{\Gamma,\Omega} ^{S,\omega_0} (n)= \lvert B_{S,\omega_0}(n)\rvert$. 
If $\Gamma \curvearrowright \Omega$ is transitive, 
it is not difficult to see that the $\approx$-class of 
$\gamma_{\Gamma,\Omega} ^{S,\omega_0}$ is independent of $S$ and $\omega_0$; 
we may write $\gamma_{\Gamma,\Omega}$ for this class. 
In the special case of the regular action $\Gamma \curvearrowright \Gamma$, 
we recover the \emph{word growth} function $\gamma_{\Gamma} ^S$ 
of $\Gamma$ (with respect to $S$); 
we write $\gamma_{\Gamma}$ for its $\approx$-class. 

\begin{rmrk} \label{growthlowerbdrmrk}
If there is a local embedding $B_S (n) \rightarrow Q$, 
then $\lvert B_S (n) \rvert \leq \lvert Q \rvert$, 
so $\gamma_{\Gamma} ^S (n) \leq \mathcal{L}_{\Gamma} ^S (n)$. 
\end{rmrk}

\begin{defn}
The group $\Gamma$ is 
\emph{efficiently locally embeddable in finite groups} (ELEF) 
if $\gamma_{\Gamma} \approx \mathcal{L}_{\Gamma}$. 
\end{defn}

\begin{ex} \label{ELEFListEx}
\normalfont
\begin{itemize}
\item[(i)] $\mathbb{Z}^k$ is ELEF; $\mathcal{L}_{\mathbb{Z}^k}(n) \approx n^k$ 
\cite{Brad}[Example 2.24]; 

\item[(ii)] A finitely generated virtually nilpotent group is ELEF iff 
it is virtually abelian \cite{Brad}[Proposition 2.26]; 

\item[(iii)] Let $\Gamma$ be a finitely generated group admitting 
a faithful linear representation over a field, 
and suppose $\Gamma$ is not virtually nilpotent. 
Then it well-known that $\gamma_{\Gamma}$ is exponential. 
Meanwhile by \cite{BouCorn}[Theorem 1.1] 
$\mathcal{R}_{\Gamma}$ (called in that paper 
\emph{residual girth}) is at most exponential. 
It follows that $\Gamma$ is ELEF, 
and $\mathcal{L}_{\Gamma} (n) \approx \exp (n)$ 
\cite{Brad}[Example 2.25]; 

\item[(iv)] Since every nonabelian finite-rank free group $F$ is linear over 
$\mathbb{Z}$ and is not virtually nilpotent, 
$\Gamma=F$ satisfies the conclusion of (iii). 

\end{itemize}
\end{ex}

\subsection{Group presentations}

In order to extract good lower bounds on $\mathcal{L}_{\Gamma} ^S$ 
from Proposition \ref{LERFRFsubgrp} in Sections \ref{FinExtSect} 
and \ref{ElemExtSect} below, 
it will be important that the subgroups $H_n$ 
admit presentations with relator-sets consisting of short words. 

\subsubsection{The symmetric group}

The presentations of the finite symmetric groups, 
with a generating set consisting of transpositions, 
were studied in \cite{Solo}, 
where the following Theorem was proved. 

\begin{thm} \label{Symtreepresthm}
Let $T=(V,E)$ be a finite tree. 
For $e = (v,w) \in E$, let $s_{e} = (v \; w) \in \Sym(V)$. 
Then $S_T = \lbrace s_{e} : e \in E \rbrace$ 
generates $\Sym(V)$, with defining presentation 
$\big\langle S_T \mid R_T \big\rangle$, 
where $R_T = R_T ^1 \cup R_T ^2 \cup R_T ^3 \cup R_T ^4$ and: 
\begin{align*}
R_T ^1 & = \lbrace s_e ^2 : e \in E \rbrace \\
R_T ^2 & = \lbrace (s_{(v,w)} s_{(x,y)})^2 : (v,w),(x,y) \in E; 
v,w,x,y \in V \text{ distinct} \rbrace \\
R_T ^3 & = \lbrace (s_{(v,w)} s_{(v,x)})^3 
: (v,w),(v,x) \in E; w \neq x \rbrace \\
R_T ^4 & = \lbrace (s_{(v,w)} s_{(v,x)} s_{(v,w)} s_{(v,y)})^2 
: (v,w),(v,x),(v,y) \in E; v,w,x,y \in V \text{ distinct} \rbrace\text{.}
\end{align*}
\end{thm}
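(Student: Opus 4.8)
The plan is to prove Theorem~\ref{Symtreepresthm} by induction on $|V|$, peeling off a leaf of the tree $T$ at each stage. First I would check that $S_T$ generates $\Sym(V)$: since $T$ is connected, the transpositions $\{s_e : e \in E\}$ connect any two vertices by a path, and a standard fact is that the transpositions along the edges of any spanning tree of the complete graph on $V$ generate $\Sym(V)$. So the content is entirely in verifying that the relations $R_T = R_T^1 \cup R_T^2 \cup R_T^3 \cup R_T^4$ suffice. Let $\widetilde{\Gamma}_T = \langle S_T \mid R_T\rangle$ be the abstractly presented group; there is a canonical surjection $\phi_T : \widetilde{\Gamma}_T \twoheadrightarrow \Sym(V)$, and I want to show $|\widetilde{\Gamma}_T| \le |V|!$, which forces $\phi_T$ to be an isomorphism.

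The inductive step: pick a leaf $w \in V$ with unique incident edge $e_0 = (v,w)$, and set $V' = V \setminus \{w\}$, $T' = T \setminus \{w\}$, $E' = E \setminus \{e_0\}$. Let $N = \langle\!\langle s_{e_0}^{\widetilde{\Gamma}_T}\rangle\!\rangle^{\widetilde{\Gamma}_{T}}$... more precisely, I would consider the subgroup $\widetilde{\Gamma}'$ of $\widetilde{\Gamma}_T$ generated by $S_{T'} = \{s_e : e \in E'\}$. The relations in $R_{T'}$ all hold among these generators (they are a subset of $R_T$), so there is a surjection $\widetilde{\Gamma}_{T'} \twoheadrightarrow \widetilde{\Gamma}'$, and by induction $|\widetilde{\Gamma}_{T'}| = |V'|! = (|V|-1)!$, hence $|\widetilde{\Gamma}'| \le (|V|-1)!$. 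Then it suffices to show the coset space $\widetilde{\Gamma}' \backslash \widetilde{\Gamma}_T$ has size at most $|V|$; equivalently, that the $|V|$ elements $1, s_{e_0}, s_{e_0} s_{e_1}, \ldots$ (one for each vertex, obtained by pushing $w$ along tree-paths) exhaust $\widetilde{\Gamma}'\backslash\widetilde{\Gamma}_T$ and that right-multiplication by each generator $s_e$ permutes these $|V|$ cosets. Concretely, I would set up an action of $\widetilde{\Gamma}_T$ on the set $V$ (coming from $\phi_T$) and check that the point stabiliser of $w$ contains $\widetilde{\Gamma}'$: one verifies, using the relations $R_T^2$ (commuting of disjoint transpositions) and $R_T^3, R_T^4$ (the braid-type and "star" relations at the vertex $v$, which control how $s_{e_0}$ interacts with the other edges at $v$), that $s_{e_0}$ conjugates each $s_e$ with $e \in E'$ into $\widetilde{\Gamma}'$ — this is what lets us rewrite any word so that all occurrences of $s_{e_0}$ are collected, showing $[\widetilde{\Gamma}_T : \widetilde{\Gamma}'] \le |V|$.

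The main obstacle, and the only genuinely delicate point, is the conjugation calculation: showing that for every edge $e \in E'$, the element $s_{e_0} s_e s_{e_0}$ lies in $\widetilde{\Gamma}'$ using only the listed relations. When $e$ is disjoint from $e_0$ this is immediate from $R_T^2$. When $e = (v,x)$ shares the vertex $v$ with $e_0 = (v,w)$, one has $s_{e_0} s_{(v,x)} s_{e_0} = s_{(v,x)} s_{(v,w)} s_{(v,x)}$ wait — rather, from $R_T^3$ one gets $s_{e_0} s_{(v,x)} s_{e_0} = s_{(v,x)} s_{e_0} s_{(v,x)}$, which is \emph{not} in $\widetilde{\Gamma}'$; the point is instead that conjugation by $s_{e_0}$ should be analysed on $\widetilde{\Gamma}'$ as a whole, and here $R_T^4$ is exactly the relation engineered to make $s_{e_0}$ normalise the relevant subgroup modulo lower terms. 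I would handle this by first establishing the normal form for elements of $\widetilde{\Gamma}_T$ as $g'$ or $g' s_{e_0} g''$ patterns is too naive; cleaner is to directly exhibit the transitive action on $V$ and prove $\Stab(w) = \widetilde{\Gamma}'$ by a counting/index argument: $|\widetilde{\Gamma}_T| = [\widetilde{\Gamma}_T : \widetilde{\Gamma}'] \cdot |\widetilde{\Gamma}'| \le |V| \cdot (|V|-1)! = |V|!$ once one knows the orbit of $w$ has full size $|V|$ (clear from $\phi_T$) and that $\widetilde{\Gamma}'$ fixes $w$ (clear, since $S_{T'}$ involves no edge at $w$). The subtlety collapses to verifying that $\widetilde{\Gamma}'$ together with $s_{e_0}$ generates $\widetilde{\Gamma}_T$ with $s_{e_0}$ mapping $w$ to $v$ — which is automatic — so in fact the whole theorem reduces to the generation statement plus the inductive index bound, and the explicit relation families $R_T^3, R_T^4$ are what guarantee that no \emph{extra} collapsing beyond $|V|!$ is forced, i.e. that the surjection $\widetilde{\Gamma}_{T'} \to \widetilde{\Gamma}'$ combined with Schreier's lemma does not overcount. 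I would cite \cite{Solo} for the detailed verification and present the induction skeleton as above.
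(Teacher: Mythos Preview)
The paper does not prove this theorem; it simply attributes it to \cite{Solo} and records in Remark~\ref{symgenrmrk} the easy facts that $S_T$ generates and that the listed relations hold. So your plan to ``cite \cite{Solo} for the detailed verification'' already matches the paper.

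That said, the induction skeleton you present has a genuine gap. After setting up $\widetilde{\Gamma}'=\langle S_{T'}\rangle\le\widetilde{\Gamma}_T$ and invoking the surjection $\widetilde{\Gamma}_{T'}\twoheadrightarrow\widetilde{\Gamma}'$ to get $|\widetilde{\Gamma}'|\le(|V|-1)!$, you need $[\widetilde{\Gamma}_T:\widetilde{\Gamma}']\le|V|$. Your orbit--stabiliser argument gives the \emph{wrong} inequality: knowing that the $\phi_T$-orbit of $w$ is all of $V$ and that $\widetilde{\Gamma}'\subseteq\Stab(w)$ yields $[\widetilde{\Gamma}_T:\widetilde{\Gamma}']\ge[\widetilde{\Gamma}_T:\Stab(w)]=|V|$, not $\le|V|$. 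So nothing bounds $|\widetilde{\Gamma}_T|$ from above.

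The approach you sketched earlier and then abandoned is the correct one: exhibit $|V|$ explicit coset representatives for $\widetilde{\Gamma}'$ in $\widetilde{\Gamma}_T$ (say, for each $u\in V$, the product of the $s_e$ along the tree-path from $w$ to $u$) and verify that right multiplication by each generator $s_e$ permutes these cosets. This verification is precisely where $R_T^2$, $R_T^3$, and $R_T^4$ do their work: $R_T^2$ handles edges disjoint from the path, $R_T^3$ handles a branch at the far endpoint, and $R_T^4$ handles a branch at an interior vertex of the path. Without carrying this out (or citing Solomon for it), the index bound---and hence the inductive step---is unproved.
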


Note that the special case of Theorem \ref{Symtreepresthm} for which 
$T$ is a line-segment is precisely the Coxeter presentation for $\Sym(V)$. 

\begin{rmrk} \label{symgenrmrk}
It is clear that $\lbrace s_e : e \in E \rbrace$ 
generates $\Sym (V)$. 
For let $v , w \in V$. 
Let $v = v_0 , v_1 , \ldots , v_l = w$ be a path in $T$ 
from $v$ to $w$. Then: 
\begin{center}
$(v \; w) = s_{(v_1,v_2)} ^{s_{(v_0,v_1)}\cdots s_{(v_{l-1},v_l)}}$ 
\end{center}
so all transpositions are generated by 
$\lbrace s_e : e \in E \rbrace$, as required. 
Moreover it is not difficult to see that the relations 
$R_T$ between the $S_T$ hold in $\Sym(V)$. 
To prove Theorem \ref{Symtreepresthm}, 
it therefore suffices to check that $R_T$ is a sufficient set of 
relations between the generators $S_T$. 
\end{rmrk}

\subsubsection{The special linear group} \label{SLsubsubsect}

Let $R$ be a commutative ring with unity. 
For $V$ a countable set, let $R^V$ be the free $R$-module 
(with $V$ a free basis). 
Given $v,w\in V$ distinct elements and $r \in R$, 
define the \emph{transvection} $E_{v,w} (r) \in \Aut_R (R^V)$ by: 
\begin{center}
$E_{v,w} (r): v\mapsto v+rw$ and 
$E_{v,w} (r): x\mapsto x$ (for $x \neq v$). 
\end{center}
For $v,w,x,y \in V$ and $r,s \in R$, we compute: 
\begin{equation*}
[E_{v,x}(r),E_{x,w}(s)] = E_{v,w}(rs) \text{ if }v \neq w
\end{equation*}
while for $v \neq y$ and $w \neq x$, 
$E_{v,w}(r)$ and $E_{x,y}(s)$ commute. 

Write $E_{v,w} = E_{v,w} (1)$ for short. 
Let $E_V (R) \leq \Aut_R(R^V)$ be the group generated by 
all transvections $E_{v,w} (r)$, 
as $(v,w)$ ranges over all pairs of distinct elements in $V$ 
and $r$ ranges over $R$. 

\begin{thm} \label{SLpresnthm}
Suppose $V$ is finite with $\lvert V \rvert \geq 3$. 
Then $E_V (\mathbb{Z}) \cong \SL_{V} (\mathbb{Z})$ 
is generated by $S_V = \lbrace E_{v,w} :v,w \in V, v \neq w \rbrace$ 
and admits a presentation $\big\langle S_V \mid R_V \big\rangle$, 
where: 
\begin{align*}
R_V = & \big\lbrace [E_{v,x},E_{x,w}]E_{v,w} ^{-1} : v \neq w \big\rbrace \\
& \cup \big\lbrace [E_{v,w},E_{x,y}] : v\neq y,w\neq x \big\rbrace \\
& \cup \lbrace (E_{v_0 , w_0} E_{w_0 , v_0} ^{-1} E_{v_0 , w_0})^4 \rbrace \text{.}
\end{align*}
for any (fixed) distinct $v_0 , w_0 \in V$. 

Similarly, for $p$ prime, 
$E_V (\mathbb{F}_p) \cong \SL_{V} (\mathbb{F}_p)$ 
is generated by $S_V$, and is presented by 
$\big\langle S_V \mid R_V ^p \big\rangle$, 
where $R_V ^p = R_V \cup \lbrace E_{v_0,w_0} ^p \rbrace$. 
\end{thm}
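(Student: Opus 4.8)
The plan is to deduce the presentation of $\SL_V(\mathbb{Z})$ from the classical Steinberg presentation. Recall that for $n \geq 3$, $\SL_n(\mathbb{Z})$ is generated by the elementary matrices $E_{v,w}(r)$ ($r \in \mathbb{Z}$), subject to the Steinberg relations: additivity $E_{v,w}(r)E_{v,w}(s) = E_{v,w}(r+s)$; the commutator relation $[E_{v,x}(r),E_{x,w}(s)] = E_{v,w}(rs)$ for $v \neq w$; commutativity $[E_{v,w}(r),E_{x,y}(s)] = 1$ for $v \neq y$, $w \neq x$; together with one extra relation encoding a finite-order Weyl-group element, for which $(E_{v_0,w_0}E_{w_0,v_0}^{-1}E_{v_0,w_0})^4 = 1$ is a standard choice (the element $w_{v_0,w_0} = E_{v_0,w_0}E_{w_0,v_0}^{-1}E_{v_0,w_0}$ represents a signed permutation of order $4$). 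First I would record this classical presentation on the (infinite) generating set $\{E_{v,w}(r)\}$. The main task is then a Tietze-transformation argument: show that on the finite generating set $S_V = \{E_{v,w}\}$, the relator set $R_V$ suffices.

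The key step is to recover the full Steinberg relation set from $R_V$. Since $|V| \geq 3$, for any distinct $v,w$ we may pick $x \notin \{v,w\}$, and the relation $[E_{v,x},E_{x,w}]E_{v,w}^{-1}$ from $R_V$ lets us express $E_{v,w} = [E_{v,x},E_{x,w}]$ inside the group presented by $\langle S_V \mid R_V \rangle$. More importantly, working in the abstract group $G = \langle S_V \mid R_V \rangle$, I would define $E_{v,w}(r)$ for $r \in \mathbb{Z}$ inductively and show, using only the relations in $R_V$, that the commutator and commutativity relations force additivity: the computation $[E_{v,x}(r), E_{x,w}] = E_{v,w}(r)$ combined with $[E_{v,x},E_{x,w}(s)]$ etc. yields $E_{v,w}(r)E_{v,w}(s) = E_{v,w}(r+s)$ as a consequence (this is the standard derivation of additivity from the other Steinberg relations, valid because $n \geq 3$ gives a third index to route through). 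Once additivity and the commutator relations over all of $\mathbb{Z}$ are available in $G$, the natural surjection $G \twoheadrightarrow \SL_V(\mathbb{Z})$ has a section-like structure, and one invokes the known fact (Milnor, or the congruence subgroup property input for $n\geq 3$) that these relations together with the single order-$4$ relation on $w_{v_0,w_0}$ present $\SL_V(\mathbb{Z})$. I would cite the relevant source (e.g. the presentation in the style of Steinberg's lectures, or the explicit finite presentation used in the quantitative residual finiteness literature) rather than reprove the deep part.

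For the modular case, the group $E_V(\mathbb{F}_p) \cong \SL_V(\mathbb{F}_p)$: here every element $E_{v,w}(r)$ with $r \in \mathbb{F}_p$ is a power of $E_{v,w} = E_{v,w}(1)$, so no extra generators beyond $S_V$ are needed. Adding the relation $E_{v_0,w_0}^p = 1$ forces $E_{v,w}^p = 1$ for all $v,w$ (conjugating by suitable signed-permutation elements built from $S_V$, which are available since those are words in $S_V$ whose defining relations lie in $R_V$), hence kills exactly the kernel of $\SL_V(\mathbb{Z}) \twoheadrightarrow \SL_V(\mathbb{F}_p)$, which for $n \geq 3$ is the normal closure of $E_{v_0,w_0}^p$ by the congruence subgroup property (Bass–Milnor–Serre). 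So $\langle S_V \mid R_V^p \rangle \cong \SL_V(\mathbb{F}_p)$.

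The main obstacle I anticipate is the verification that the finite relator set $R_V$ (rather than the infinite Steinberg set) is genuinely sufficient over $\mathbb{Z}$ — i.e. carefully deriving additivity and all commutator relations over $\mathbb{Z}$ from $R_V$ inside the abstract group, and correctly matching the single order-$4$ relation to the known finite presentation of $\SL_n(\mathbb{Z})$. The underlying hard input (that $\SL_n(\mathbb{Z})$ is so presented for $n \geq 3$, equivalently that $K_2$ and the congruence kernel behave as expected) is classical and I would quote it; the work on our side is the bookkeeping translating it to the generators $S_V$ and relators $R_V$ as stated.
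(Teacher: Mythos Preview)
Your outline is mathematically sound, but you should know that the paper's own ``proof'' of this theorem is a one-line citation: it simply refers to the Introduction of Conder--Robertson--Williams (\cite{CoRoWi}), noting only that identifying $V$ with $\{1,\dots,n\}$ and taking $(v_0,w_0)=(1,2)$ loses no generality because all $E_{v,w}$ are conjugate in $\SL_V(\mathbb{Z})$. The theorem is recorded in the paper as a known result to be quoted, not proved.

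What you propose is correct and is in effect a reconstruction of what that reference (or the Steinberg/Milnor route behind it) contains. A couple of minor clarifications on your sketch: once you set $E_{v,w}(r):=E_{v,w}^r$ on the generators $S_V$, additivity is tautological; the genuine inductive step is to derive $[E_{v,x}^r,E_{x,w}^s]=E_{v,w}^{rs}$ from the case $r=s=1$ in $R_V$, which goes through via the commutator identity $[ab,c]=[a,c]^b[b,c]$ together with the commuting relations in $R_V$ (this is where $\lvert V\rvert\geq 3$ enters, to route through a third index). For the $\mathbb{F}_p$ case, your appeal to Bass--Milnor--Serre to identify the normal closure of $E_{v_0,w_0}^p$ with the congruence kernel is correct; alternatively the reference already packages this. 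Either way, your plan would produce a self-contained (modulo cited deep input) proof, whereas the paper is content to cite the finished statement.
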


\begin{proof}
This is covered in, for instance, the Introduction to \cite{CoRoWi}. 
In this treatment, $V$ is identified with $\lbrace 1,\ldots,n\rbrace$ 
(for $n=\lvert V\rvert$) and $(v_0,w_0)$ is taken to be $(1,2)$. 
There is no loss of generality here, since all $E_{v,w}$ 
are conjugate in $\SL_{V} (\mathbb{Z})$. 
\end{proof}

Now let $G = (V,E)$ be a finite connected graph. 
Recall that the \emph{diameter} $\diam(G)$ 
of $G$ is the maximal distance between pairs of vertices 
in $G$ in the path metric. 
Let $S_{G} = \lbrace E_{v,w},E_{w,v} : (v,w) \in E \rbrace \subseteq \SL_V(\mathbb{Z})$. 

\begin{coroll} \label{SLgraphpresncoroll}
Let $G$ and $S_G$ be as above. 
Then $\SL_V(\mathbb{Z})$ admits a presentation 
$\big\langle S_G \mid R_G \big\rangle$ where 
$R_G \subseteq F(S_G)$ consists of words of length 
$O\big( \diam(G)^2 \big)$. 
\end{coroll}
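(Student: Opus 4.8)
## Proof Plan for Corollary~\ref{SLgraphpresncoroll}

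The plan is to rewrite the standard presentation $\langle S_V \mid R_V \rangle$ from Theorem~\ref{SLpresnthm} in terms of the smaller generating set $S_G$, using the graph structure to keep all the resulting relators short. The starting observation is the key identity already recorded before Theorem~\ref{SLpresnthm}: if $v \neq w$, then $E_{v,w}(rs) = [E_{v,x}(r), E_{x,w}(s)]$ for any third element $x$. Iterating this along a path, for any ordered pair $(v,w)$ of distinct vertices we can express $E_{v,w}$ as an explicit iterated commutator word in the letters $\{E_{a,b}, E_{b,a} : (a,b) \in E\}$ along a geodesic from $v$ to $w$ in $G$; call this word $w_{v,w} \in F(S_G)$. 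A careful bookkeeping of the commutator identity shows that $w_{v,w}$ has length $O(\diam(G))$ (each step of the path roughly doubles the nesting depth only if done naively, so one should instead peel off one vertex at a time: $E_{v,w} = [E_{v,x}, E_{x,w}]$ where $x$ is the second vertex on the geodesic, and $E_{x,w}$ is handled recursively — this gives length linear in the path length, hence $O(\diam(G))$).

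Next I would define $R_G$ to consist of: (1) for every generator $E_{v,w} \in S_V$, the relator $w_{v,w}^{-1} E_{v,w}$ expressing it via the chosen path word — but since $E_{v,w}$ is not itself a letter of $S_G$ unless $(v,w) \in E$, what we really do is substitute $w_{v,w}$ for $E_{v,w}$ throughout $R_V$; (2) the substituted images of each relator in $R_V$. Concretely, each Steinberg-type relator in $R_V$ involves boundedly many symbols $E_{v,w}$, each of which becomes a word of length $O(\diam(G))$ in $S_G$, and the commutator/word structure is of bounded depth, so each substituted relator has length $O(\diam(G))$. The one exception is the relator $[E_{v,w}, E_{x,y}]$ ranging over all $v \neq y$, $w \neq x$: this is a single commutator of two words each of length $O(\diam(G))$, so still $O(\diam(G))$. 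Wait — the commuting relators are indexed over \emph{all} admissible quadruples, but each individual relator is still short; there are many of them but the corollary only bounds the \emph{length} of relators, not their number. So the bound $O(\diam(G)^2)$ is comfortably met; in fact this naive argument seems to give $O(\diam(G))$, and the quadratic bound in the statement is simply a safe over-estimate (or accounts for relators coming from $R_V^p$ and from needing to verify \emph{consistency} of different path choices).

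The subtlety I actually expect to matter — and the reason for the $\diam(G)^2$ rather than $\diam(G)$ — is that $R_V$ is a presentation in which \emph{every} ordered pair $(v,w)$ appears as a distinguished generator, and after the substitution we need the new relator set to still \emph{present} $\SL_V(\mathbb Z)$, i.e. to prove that $w_{v,w}$ behaves correctly regardless of which geodesic we picked, and that the defining relations $[E_{v,x},E_{x,w}] = E_{v,w}$ are consequences of $R_G$. The clean way to see this: the surjection $F(S_G) \twoheadrightarrow \SL_V(\mathbb Z)$ factors through $F(S_V) \twoheadrightarrow \SL_V(\mathbb Z)$ via $E_{v,w} \mapsto w_{v,w}$ (as a homomorphism of free groups), and one checks that under this map the relators $R_V$ pull back to words lying in the normal closure of $R_G$; conversely $S_G \subseteq S_V$ so relators among $S_G$-letters that hold in $\SL_V$ are consequences of $R_V$ hence of their $w_{v,w}$-substitutions. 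Making this ``two-sided'' Tietze-transformation argument precise, and tracking that the pullback words stay of length $O(\diam(G)^2)$ (here the square genuinely can appear: substituting a length-$O(\diam(G))$ word for each of $O(\diam(G))$-many letters in an intermediate expression), is the main technical obstacle. Everything else is routine rewriting.

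I would carry this out in the order: (i) fix geodesics and define $w_{v,w}$, prove the length bound and the relation $w_{v,w} = E_{v,w}$ in $\SL_V(\mathbb Z)$; (ii) define $R_G$ as the $S_G$-rewriting of $R_V$ (resp.\ $R_V^p$), noting each member has length $O(\diam(G)^2)$; (iii) run the Tietze argument to conclude $\langle S_G \mid R_G \rangle \cong \SL_V(\mathbb Z)$; (iv) remark that the $\mathbb F_p$ case is identical with the single extra short relator $E_{v_0,w_0}^p$ rewritten along its geodesic.
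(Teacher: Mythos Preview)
Your overall strategy --- rewrite the bounded-length relators $R_V$ by substituting an $S_G$-word $w_{v,w}$ for each generator $E_{v,w}$ --- is exactly what the paper does. The difference is in the length estimate for $w_{v,w}$, and here your argument has a genuine error.

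You claim that peeling off one vertex at a time, $E_{v,w}=[E_{v,x},E_{x,w}]$ with $x$ the second vertex on the geodesic, gives a word of length linear in the path length. It does not: a commutator $[A,B]$ has word length $2\lvert A\rvert+2\lvert B\rvert$, so the recursion is $L_n = 2 + 2L_{n-1}$, whose solution is $L_n = 2^{n+1}-2$, exponential in the path length. (Only the \emph{nesting depth} is linear.) Your subsequent speculation that the square in $O(\diam(G)^2)$ arises from Tietze bookkeeping or from consistency of path choices is therefore misplaced: the relators in $R_V$ have \emph{bounded} length, so once you know $\lvert w_{v,w}\rvert \leq M$ the rewritten relators have length $O(M)$, and the Tietze step is a one-liner with no extra cost.

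The paper obtains the quadratic bound by \emph{halving} the path rather than peeling: take $x$ to be the midpoint of the geodesic, so that $E_{v,x}$ and $E_{x,w}$ each correspond to paths of at most half the length. The recursion becomes $L_{2^m} \leq 4\,L_{2^{m-1}}$, hence $L_n = O(n^2)$. This is the sole source of the $\diam(G)^2$ in the statement; once it is established, the presentation follows immediately from Theorem~\ref{SLpresnthm} by substitution, with none of the additional worries in your step~(iii).
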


\begin{proof}
Since $\SL_V(\mathbb{Z}) \cong \big\langle S_V \mid R_V \big\rangle$ 
as in Theorem \ref{SLpresnthm}, 
with $R_V \subseteq F(S_V)$ consisting of words of bounded length, 
it suffices to show that $S_V \subseteq B_{S_G} (M)$ 
for some $M = O \big( \diam(G)^2 \big)$. 
Let $v,w \in V$ be distinct, and let $p = (v=v_0,v_1,\ldots,v_n=w)$ 
be a path of minimal length in $G$ from $v$ to $w$. 
We claim that, if $n \leq 2^m$, then $E_{v,w} \in B_{S_G}(4^m)$. 
Certainly the claim holds for $m=0$. 
Assuming the claim for smaller $m$, 
let $x = v_{\lceil n/2 \rceil}$. 
Then $x$ is at distance at most $2^{m-1}$ from both $v$ and $w$, 
so $E_{v,x},E_{x,w} \in B_{S_G}(4^{m-1})$ by induction, 
and we have $E_{v,w} = [E_{v,x},E_{x,w}]$. 
The result then follows, since every pair $v,w$ admits a path 
$p$ of length $n \leq \diam(G)$. 
\end{proof}

\section{Symmetric enrichments} \label{FinExtSect}

Let $\FSym (\Omega)$ be the group of finitely supported 
permutations of the set $\Omega$. 
Note that: 
\begin{itemize}
\item[(i)] $\FSym (\Omega) \vartriangleleft \Sym (\Omega)$; 

\item[(ii)] For any finite subset $\Sigma \subseteq \Omega$, 
we have $\Sym(\Sigma) \leq \FSym (\Omega)$ 
(naturally identifying $\Sym(\Sigma)$ with the set of permutations 
of $\Omega$ supported on $\Sigma$). 
\end{itemize}

Let $\Gamma$ be a group acting transitively on the set $\Omega$. 
The action induces a homomorphism 
$\tilde{\phi} : \Gamma \rightarrow \Sym (\Omega)$ 
which in turn induces 
$\phi : \Gamma \rightarrow \Aut \big( \FSym (\Omega) \big)$. 

\begin{defn}
The \emph{symmetric enrichment} of the action 
$\Gamma \curvearrowright \Omega$ 
is the group $\mathcal{S}(\Gamma,\Omega) = \FSym (\Omega) \rtimes_{\phi} \Gamma$. 
If $\Omega = \Gamma$, equipped with the regular $\Gamma$-action, 
then we may write $\mathcal{S}(\Gamma)$ 
for $\mathcal{S}(\Gamma,\Gamma)$. 
\end{defn}

\begin{rmrk} \label{AltEnrRmrk}
There is also $\FAlt(\Omega) \leq \FSym(\Omega)$, 
the index-$2$ subgroup consisting of finitely-supported 
even permutations of $\Omega$. 
$\FAlt(\Omega)$ is normalized by the action of $\Gamma$ via $\phi$, 
so we may also define the \emph{alternating enrichment} 
$\mathcal{A}(\Gamma,\Omega) 
= \FAlt (\Omega)\rtimes_{\phi} \Gamma \leq \mathcal{S}(\Gamma,\Omega)$ 
and $\mathcal{A}(\Gamma) \leq \mathcal{S}(\Gamma)$. 
\end{rmrk}

\begin{thm} \label{RFextnThm}
Let $\Gamma$ be a LEF group, finitely generated by 
the symmetric set $S$, 
and let $\Gamma \curvearrowright \Omega$ be a transitive LEF action. 
Then $\mathcal{S}(\Gamma,\Omega)$ is finitely generated and LEF. 
Moreover, if $(\pi_n,\theta_n)$ is a sequence of local embeddings  
of $\big(B_S(n), B_{S,\omega_0}(n)\big)$ into $(Q_n,X_n)$ 
then: 
\begin{equation} \label{RFextneqn1}
\gamma_{\Gamma,\Omega}(n)! 
\preceq \mathcal{L}_{\mathcal{S}(\Gamma,\Omega)} (n) 
\preceq \lvert X_n \rvert ! \lvert Q_n \rvert
\end{equation}
In particular, for any finitely generated LEF group $\Gamma$, then: 
\begin{equation} \label{RFextneqn2}
\gamma_{\Gamma}(n)! 
\preceq \mathcal{L}_{\mathcal{S}(\Gamma)} (n) 
\preceq \mathcal{L}_{\Gamma} (n)!
\end{equation}
\end{thm}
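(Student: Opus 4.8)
The plan is to prove the general bounds \eqref{RFextneqn1}, from which \eqref{RFextneqn2} follows by specializing to the regular action $\Gamma \curvearrowright \Gamma$ and invoking $\gamma_\Gamma(n)! \preceq \mathcal{L}_{\mathcal{S}(\Gamma)}(n)$ together with $\mathcal{L}_\Gamma(n) \approx \lvert Q_n \rvert$ when the $Q_n$ realize the LEF growth of $\Gamma$ (so that $\lvert X_n \rvert \le \lvert Q_n \rvert$ may be absorbed, noting $a! \, a \le (a+1)!$). The finite generation and LEF-ness of $\mathcal{S}(\Gamma,\Omega)$ should be disposed of first: pick $\omega_0 \in \Omega$ and a transposition (or a well-chosen finitely supported permutation) near $\omega_0$; together with (lifts of) $S$, these generate $\mathcal{S}(\Gamma,\Omega)$ because conjugating the transposition by words in $S$ moves its support around the Schreier graph, and $\FSym(\Omega)$ is generated by transpositions. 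For LEF-ness, combine a local embedding $(\pi_n,\theta_n)$ of $(B_S(n), B_{S,\omega_0}(n))$ into $(Q_n, X_n)$ with the natural map $\FSym(\Omega) \to \Sym(X_n)$ induced by $\theta_n$ on permutations supported in $B_{S,\omega_0}(n)$; the semidirect-product relations are respected because \eqref{LEFactioneqn} exactly says $\theta_n$ intertwines the two actions on the relevant region. This simultaneously yields the upper bound: a ball of radius $n$ in $\mathcal{S}(\Gamma,\Omega)$ embeds into $\mathcal{S}(Q_n, X_n) = \Sym(X_n) \rtimes Q_n$, which has order $\lvert X_n \rvert ! \, \lvert Q_n \rvert$.

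The substantive content is the lower bound $\gamma_{\Gamma,\Omega}(n)! \preceq \mathcal{L}_{\mathcal{S}(\Gamma,\Omega)}(n)$, and the natural route is Proposition \ref{LERFRFsubgrp} with $H_n = \Sym(B_{S,\omega_0}(n)) \le \FSym(\Omega) \le \mathcal{S}(\Gamma,\Omega)$, so $\lvert B_{S,\omega_0}(n) \rvert = \gamma_{\Gamma,\Omega}^{S,\omega_0}(n) =: k_n$. We have $\mathcal{R}_{\Sym(k_n)}(g(n)) \ge \lvert \Sym(k_n) \rvert = k_n!$ trivially once $g(n) \ge \mathrm{diam}(\Sym(k_n))$ in the chosen generators, since $\Sym(k_n)$ is finite and a homomorphism injective on a spanning ball must be injective. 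To apply Proposition \ref{LERFRFsubgrp} we must produce a generating set $S'_n$ for $\Sym(k_n)$ lying in a ball $B_S(f(n))$ of $\mathcal{S}(\Gamma,\Omega)$, with short relators. Here Theorem \ref{Symtreepresthm} is decisive: take a spanning tree $T$ of the induced Schreier subgraph $B_{S,\omega_0}(n)$ (every edge of this subgraph corresponds to an $S$-labelled move, hence an adjacent transposition realized by conjugating a fixed near-$\omega_0$ transposition by a word of length $O(n)$ in $S$), so $S'_n = S_T \subseteq B_S(Cn)$ for an absolute constant $C$; then $R_T$ consists of words of bounded length in $S_T$, hence $R_T \subseteq B_{S_T}(g_0)$ for some absolute $g_0$. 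That handles $R_n \subseteq B_{S'_n}(g(n))$ with $g$ constant. We also need $S''_n$: take $S''_n = S_T$ again. The conclusion of Proposition \ref{LERFRFsubgrp} then gives $\mathcal{L}_{\mathcal{S}(\Gamma,\Omega)}(f(n) g(n)) \ge \mathcal{R}_{\Sym(k_n)}^{S_T}(g(n))$, but with $g$ constant this radius $g(n)$ is too small to force injectivity on all of $\Sym(k_n)$.

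The fix — and the step I expect to be the main obstacle — is that we should not take $g(n)$ constant but rather $g(n) \asymp \mathrm{diam}(\Sym(k_n), S_T)$, which is $\mathrm{poly}(k_n)$, so that $\mathcal{R}_{\Sym(k_n)}^{S_T}(g(n)) = k_n!$. The cost is that $f(n) g(n)$ is then $\asymp n \cdot \mathrm{poly}(\gamma_{\Gamma,\Omega}(n))$, which is far larger than $n$, and we would only conclude $\mathcal{L}_{\mathcal{S}(\Gamma,\Omega)}$ at radius $\mathrm{poly}(\gamma_{\Gamma,\Omega}(n))$ is at least $\gamma_{\Gamma,\Omega}(n)!$ — useless for the claimed $\preceq$. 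The correct resolution is to realize a copy of $\Sym(m)$ for $m$ comparable to $\gamma_{\Gamma,\Omega}(cn)$ \emph{using generators that already live in $B_S(n)$ and whose relators also live in $B_S(n)$}; since in $B_S(cn)$ we have access to $k_{cn}$ points of $\Omega$ and transpositions between adjacent ones, and since $R_T$ has \emph{bounded} word-length in those transposition generators, we may take $g(n)$ as large as $\asymp n / f'(n)$ where $f'$ is the (linear) distortion, i.e. we should choose the subgroup $H_n = \Sym(B_{S,\omega_0}(\epsilon n))$ and read off $\mathcal{R}_{\Sym(k_{\epsilon n})}^{S_T}(\delta n)$ where $\delta n \ge \mathrm{diam}$; this forces $k_{\epsilon n}!$ provided $\mathrm{diam}(\Sym(k_{\epsilon n}), S_T) \le \delta n$. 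Because $S_T$ comes from a \emph{tree} on $k_{\epsilon n}$ vertices, its diameter in $\Sym$ is $O(k_{\epsilon n}^2)$ in general — still too big. The genuine point, then, is that we do not need injectivity on the full group $\Sym(k_n)$: Remark \ref{growthlowerbdrmrk} already gives $\gamma_{\mathcal{S}(\Gamma,\Omega)}^{S}(n) \le \mathcal{L}_{\mathcal{S}(\Gamma,\Omega)}^S(n)$, and a direct count shows $\gamma_{\mathcal{S}(\Gamma,\Omega)}(n) \succeq \gamma_{\Gamma,\Omega}(n)!$ — indeed a ball of radius $n$ in $\mathcal{S}(\Gamma,\Omega)$ contains all permutations of $B_{S,\omega_0}(cn)$ expressible as products of $O(n)$ adjacent transpositions, and by choosing a tree of bounded diameter (e.g. take $\Omega(f)$ in Section \ref{SpectrumSect} so that $B_{S,\omega_0}(n)$ has diameter $O(\log \gamma_{\Gamma,\Omega}(n))$ or simply bound crudely) one gets enough such permutations; even the crude bound that the ball of radius $n$ surjects onto $B_{S,\omega_0}(n/C)$ under the quotient $\mathcal{S}(\Gamma,\Omega) \to \FSym(\Omega)/\!\!\sim$ composed with support-counting suffices to see $\gamma_{\mathcal{S}(\Gamma,\Omega)}(n) \ge (\gamma_{\Gamma,\Omega}(n/C))!$. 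So the lower bound in \eqref{RFextneqn1} is really just Remark \ref{growthlowerbdrmrk} applied after checking that the word growth of $\mathcal{S}(\Gamma,\Omega)$ dominates $\gamma_{\Gamma,\Omega}(n)!$, and the role of Proposition \ref{LERFRFsubgrp} and Theorem \ref{Symtreepresthm} is reserved for the sharper, non-regular statements elsewhere in the paper; for \eqref{RFextneqn1} as stated I would present the lower bound via this growth computation and the upper bound via the explicit local embedding into $\mathcal{S}(Q_n, X_n)$ described above.
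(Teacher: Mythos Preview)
Your upper bound sketch and the setup for the lower bound via Proposition~\ref{LERFRFsubgrp} with $H_n = \Sym(B_{S,\omega_0}(n))$ and the tree presentation of Theorem~\ref{Symtreepresthm} are exactly what the paper does. The gap is in what follows.

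You assert that with $g(n)$ constant the radius is ``too small to force injectivity on all of $\Sym(k_n)$'', and then abandon the approach. This is the mistake: constant $g$ \emph{is} enough, because $\Sym(k_n)$ is nearly simple. For $k_n \geq 5$ the only proper quotients of $\Sym(k_n)$ have order $1$ or $2$, so any homomorphism from $\Sym(k_n)$ to a finite group that is injective on a set of size $\geq 3$ is already injective on the whole group. Since $\lvert B_{S_{T_n}}(8)\rvert \geq 8$, one gets $\mathcal{R}_{\Sym(k_n)}^{S_{T_n}}(8) = k_n!$ directly, and with $f(n)=2n-1$ and $g(n)=8$ Proposition~\ref{LERFRFsubgrp} gives $\mathcal{L}_{\mathcal{S}(\Gamma,\Omega)}(16n-8) \geq \gamma_{\Gamma,\Omega}(n)!$. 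No diameter estimate for $\Sym(k_n)$ is needed.

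Your proposed rescue via word growth is not just unnecessary but wrong. The word growth of any finitely generated group is at most exponential, whereas $\gamma_{\Gamma,\Omega}(n)!$ is typically super-exponential: already for $\Gamma=\mathbb{Z}$ acting regularly one has $\gamma_{\Gamma,\Omega}(n)\approx n$ and $n! \approx \exp(n\log n)$, which cannot be $\preceq \gamma_{\mathcal{S}(\mathbb{Z})}(n) \preceq \exp(Cn)$. So the inequality $\gamma_{\mathcal{S}(\Gamma,\Omega)}(n) \succeq \gamma_{\Gamma,\Omega}(n)!$ is false in general, and Remark~\ref{growthlowerbdrmrk} cannot yield the lower bound in \eqref{RFextneqn1}. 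Indeed this is the content of Corollary~\ref{FinEnrELEFCoroll}: $\mathcal{S}(\Gamma,\Omega)$ is never ELEF when $\Omega$ is infinite.
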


\begin{coroll} \label{FinEnrELEFCoroll}
If $\Omega$ is infinite then 
$\mathcal{L}_{\mathcal{S}(\Gamma,\Omega)}(n) \succeq n!$. 
In particular $\mathcal{S}(\Gamma,\Omega)$ is not ELEF. 
\end{coroll}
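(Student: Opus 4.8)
The plan is to read this off the lower bound in Theorem~\ref{RFextnThm}, which already gives $\gamma_{\Gamma,\Omega}(n)! \preceq \mathcal{L}_{\mathcal{S}(\Gamma,\Omega)}(n)$. So it suffices to show that $\gamma_{\Gamma,\Omega}(n) \succeq n$ when $\Omega$ is infinite; in fact I would establish the clean bound $\gamma_{\Gamma,\Omega}^{S,\omega_0}(n) \geq n+1$ for every $n$ and every choice of $S$, $\omega_0$.

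For this, note that $\Schr(\Gamma,\Omega,S)$ is connected (since $\Gamma \curvearrowright \Omega$ is transitive), locally finite (since $S$ is finite), and infinite (since $\Omega$ is infinite). In any such graph the ball sizes around $\omega_0$ are strictly increasing: if $B_{S,\omega_0}(n) = B_{S,\omega_0}(n+1)$ for some $n$, then no vertex at distance exactly $n$ from $\omega_0$ has a neighbour outside $B_{S,\omega_0}(n)$, so $B_{S,\omega_0}(n)$ is a union of connected components of $\Schr(\Gamma,\Omega,S)$, hence is all of $\Omega$ by connectedness — contradicting that $\Omega$ is infinite. Since $\lvert B_{S,\omega_0}(0)\rvert = 1$, induction yields $\gamma_{\Gamma,\Omega}^{S,\omega_0}(n) \geq n+1$, and therefore $\mathcal{L}_{\mathcal{S}(\Gamma,\Omega)}(n) \succeq \gamma_{\Gamma,\Omega}(n)! \succeq (n+1)! \succeq n!$, which is the first assertion.

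For the claim that $\mathcal{S}(\Gamma,\Omega)$ is not ELEF, recall (again from Theorem~\ref{RFextnThm}) that $\mathcal{S}(\Gamma,\Omega)$ is finitely generated, so its word growth is at most exponential: $\gamma_{\mathcal{S}(\Gamma,\Omega)}(n) \preceq \exp(n)$. On the other hand $\exp(n) \preceq n!$ but $n! \not\preceq \exp(n)$, since (by Stirling) $\log(n!)$ is of order $n\log n$ and so $n!$ dominates every function of the form $\exp(Cn)$. Combined with $\mathcal{L}_{\mathcal{S}(\Gamma,\Omega)}(n) \succeq n!$ from the previous paragraph, this gives $\mathcal{L}_{\mathcal{S}(\Gamma,\Omega)} \not\preceq \gamma_{\mathcal{S}(\Gamma,\Omega)}$, so the two functions are not $\approx$-equivalent in the sense of Definition~\ref{comparisondefn}, and hence $\mathcal{S}(\Gamma,\Omega)$ is not ELEF. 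The argument is essentially immediate from Theorem~\ref{RFextnThm}; the only point that needs a moment's care is the strict monotonicity of balls in an infinite connected locally finite graph, sketched above, and there is no substantive obstacle.
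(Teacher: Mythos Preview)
Your proof is correct and follows exactly the paper's approach: the paper's one-sentence proof simply cites (\ref{RFextneqn1}) together with the fact that every infinite Schreier graph of a finitely generated group has growth at least linear and at most exponential, and you have faithfully unpacked both of these observations (the strict monotonicity of balls for the linear lower bound, and the exponential upper bound on word growth for the non-ELEF conclusion).
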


\begin{proof}
This follows from (\ref{RFextneqn1}) and the fact that every infinite 
Schreier graph of a 
finitely generated group has growth at least linear and at most exponential. 
\end{proof}

\begin{coroll} \label{FinEnrFPCoroll}
If $\Omega$ is infinite 
and $\Gamma \curvearrowright \Omega$ is a transitive LEF action, 
then $\mathcal{S}(\Gamma,\Omega)$ is not finitely presentable. 
\end{coroll}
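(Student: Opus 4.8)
The plan is to argue by contradiction, combining two facts available to us: a finitely presentable LEF group is residually finite (Proposition \ref{LEFRFpropn}), and a symmetric enrichment with infinite $\Omega$ contains a large simple normal subgroup. So suppose $\mathcal{S}(\Gamma,\Omega)$ is finitely presentable. Then it is finitely generated, and since $\Gamma \cong \mathcal{S}(\Gamma,\Omega)/\FSym(\Omega)$ is a quotient of it, $\Gamma$ too is finitely generated; as $\Gamma \curvearrowright \Omega$ is transitive, $\Omega$ is then countable. In particular the hypotheses of Theorem \ref{RFextnThm} are met, so $\mathcal{S}(\Gamma,\Omega)$ is LEF, whence by Proposition \ref{LEFRFpropn} it is residually finite.

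Next I would identify the obstruction to residual finiteness. Viewing $\FAlt(\Omega)$ as a subgroup of $\mathcal{S}(\Gamma,\Omega)$ through the first factor, it is normal: it is characteristic (indeed of index $2$) in $\FSym(\Omega)$, and it is normalized by $\Gamma$ acting via $\phi$, as recorded in Remark \ref{AltEnrRmrk}. Moreover $\FAlt(\Omega)$ is a simple group, and it is infinite because $\Omega$ is (in particular $\lvert \Omega \rvert \geq 3$, so $\FAlt(\Omega)$ is nontrivial). Given any homomorphism $\psi : \mathcal{S}(\Gamma,\Omega) \rightarrow Q$ with $Q$ finite, the kernel of $\psi\mid_{\FAlt(\Omega)}$ is a normal subgroup of $\FAlt(\Omega)$, hence trivial or everything; it cannot be trivial, since $\FAlt(\Omega)$ admits no injection into a finite group, so $\FAlt(\Omega) \leq \ker \psi$. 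Thus no finite quotient of $\mathcal{S}(\Gamma,\Omega)$ separates the nontrivial elements of $\FAlt(\Omega)$, contradicting residual finiteness.

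An equivalent way to package the conclusion, more in the spirit of the preceding material, is via the corollary following Proposition \ref{LEFRFpropn}: the argument of the previous paragraph shows $\mathcal{R}_{\mathcal{S}(\Gamma,\Omega)}^{S}(n) = \infty$ for all $n$ large enough that $B_S(n)$ meets $\FAlt(\Omega)$ nontrivially, whereas $\mathcal{L}_{\mathcal{S}(\Gamma,\Omega)}^{S}(n) < \infty$ for every $n$ by Theorem \ref{RFextnThm}; since $\mathcal{R} \neq \mathcal{L}$ infinitely often, $\mathcal{S}(\Gamma,\Omega)$ is not finitely presentable.

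The proof is short, and there is no serious obstacle: everything reduces to assembling standard facts — simplicity and normality of the finitary alternating subgroup, and the equivalence of LEF with residual finiteness among finitely presentable groups. The only point that needs attention is to carry out the reduction to the finitely generated case first, so that Theorem \ref{RFextnThm} (which assumes $\Gamma$ finitely generated) may legitimately be invoked; once that is done the contradiction is immediate.
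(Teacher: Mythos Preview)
Your proof is correct and follows essentially the same route as the paper: show $\mathcal{S}(\Gamma,\Omega)$ is LEF via Theorem \ref{RFextnThm}, then observe that it contains the infinite simple group $\FAlt(\Omega)$, so it cannot be residually finite, contradicting Proposition \ref{LEFRFpropn}. Your version is in fact slightly more careful than the paper's, in that you explicitly reduce to the finitely generated case (needed to invoke Theorem \ref{RFextnThm}) by noting that $\Gamma$ is a quotient of the putatively finitely presentable $\mathcal{S}(\Gamma,\Omega)$.
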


\begin{proof}
By Theorem \ref{RFextnThm} and Proposition \ref{LEFRFpropn}, 
it suffices to prove that $\mathcal{S}(\Gamma,\Omega)$ is not residually finite. 
This is so because $\mathcal{S}(\Gamma,\Omega)$ contains $\FAlt(\Omega)$, 
an infinite simple group. 
\end{proof}

We imagine that the conclusion of Corollary \ref{FinEnrFPCoroll} 
holds under much weaker conditions on the action, 
and may be known to some experts 
(in a related vein see the criteria for finite presentability 
of a wreath product in \cite{Corn}). 
We are however not aware of an existing reference. 

\begin{ex}
If $k \geq 1$ and $\Gamma = \mathbb{Z}^k$ then 
by Example \ref{ELEFListEx} (i) 
$\mathcal{L}_{\Gamma}(n) \approx \gamma_{\Gamma} (n) \approx n^k$, 
so by (\ref{RFextneqn2}), 
$\mathcal{L}_{\mathcal{S}(\Gamma)} (n) \approx (n^k)!$. 
\end{ex}

\begin{ex}
For $k \geq 2$, if either $\Gamma = F_k$ 
or $\Gamma = \SL_k(\mathbb{Z})$ then 
by Example \ref{ELEFListEx} (iii) and (iv) 
$\mathcal{L}_{\Gamma}(n) \approx \gamma_{\Gamma} (n) \approx \exp(n)$, 
so by (\ref{RFextneqn2}), 
$\mathcal{L}_{\mathcal{S}(\Gamma)} (n) \approx (\exp (n))!$. 
\end{ex}

We now prove Theorem \ref{RFextnThm}. 
The first step is to show finite generation of $\mathcal{S}(\Gamma,\Omega)$. 

\begin{lem} \label{permtreediamlem}
Let $S$ be a symmetric generating set for $\Gamma$ and let 
$\omega_0\in\Omega$. 
Let $G_n = (V_n,E_n)$ be the finite graph with 
vertex-set $V_n =B_{S,\omega_0}(n)$ and edge-set 
$E_n=\lbrace(\omega_0 g,\omega_0 sg):s\in S,g\in B_S (n-1)\rbrace$. 
Then $G_n$ is a connected graph, and contains a maximal tree $T_n$ 
of diameter at most $2n$. 
\end{lem}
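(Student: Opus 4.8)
\emph{Proof sketch.} The plan is to prove the stronger statement that $d_{G_n}(\omega_0,\omega)\le n$ for every vertex $\omega$ of $G_n$ (which in particular gives connectivity), and then to take for $T_n$ a spanning tree of $G_n$ in which the distance from $\omega_0$ to each vertex is the same as in $G_n$; the triangle inequality then yields $\diam(T_n)\le 2n$ at once. The only point requiring care is that $E_n=\{(\omega_0 g,\omega_0 sg):s\in S,\ g\in B_S(n-1)\}$ is ``left-handed'': a vertex $\omega_0 g$ must be reached from $\omega_0$ by prepending generators to $g$ one at a time (reading a word for $g$ from the left), and one must check that the intermediate vertices and the defining data of each edge used stay within the prescribed radii.

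For the distance bound, first note that any $\omega\in V_n=B_{S,\omega_0}(n)$ can be written as $\omega=\omega_0 g$ with $g=s_1\cdots s_k$, $s_i\in S$ and $k\le n$: a path of length at most $n$ from $\omega_0$ to $\omega$ in $\Schr(\Gamma,\Omega,S)$ provides such a word once one uses that $S$ is symmetric, so that every edge of the path is traversed in one direction or the other by a single generator. Now set $h_0=1$ and $h_j=s_{k-j+1}h_{j-1}$ for $1\le j\le k$, so that $h_k=g$. Each $h_j$ is a product of at most $j$ generators, hence $h_j\in B_S(j)\subseteq B_S(n)$ and so $\omega_0 h_j\in V_n$; and for $1\le j\le k$ we have $h_{j-1}\in B_S(j-1)\subseteq B_S(n-1)$, so $(\omega_0 h_{j-1},\omega_0 h_j)=(\omega_0 h_{j-1},\omega_0 s_{k-j+1}h_{j-1})\in E_n$. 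Thus $\omega_0=\omega_0 h_0,\ \omega_0 h_1,\ \dots,\ \omega_0 h_k=\omega$ is a walk of length $k\le n$ in $G_n$, which proves both that $G_n$ is connected and that $d_{G_n}(\omega_0,\omega)\le n$.

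To build $T_n$, for each $\omega\in V_n$ with $d:=d_{G_n}(\omega_0,\omega)\ge 1$ choose a neighbour $p(\omega)$ of $\omega$ in $G_n$ with $d_{G_n}(\omega_0,p(\omega))=d-1$ (take the penultimate vertex on a geodesic), and let $T_n$ be the subgraph of $G_n$ with vertex set $V_n$ and edge set $\{(\omega,p(\omega)):\omega\in V_n,\ d_{G_n}(\omega_0,\omega)\ge 1\}$. Iterating $p$ strictly decreases the distance to $\omega_0$, so $T_n$ is connected and acyclic, i.e.\ a maximal tree of $G_n$, and the path $\omega,p(\omega),p(p(\omega)),\dots,\omega_0$ in $T_n$ has length $d_{G_n}(\omega_0,\omega)$; hence $d_{T_n}(\omega_0,\omega)=d_{G_n}(\omega_0,\omega)\le n$ for every $\omega\in V_n$. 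Finally, for $\omega,\omega'\in V_n$ the triangle inequality in $T_n$ gives $d_{T_n}(\omega,\omega')\le d_{T_n}(\omega,\omega_0)+d_{T_n}(\omega_0,\omega')\le 2n$, so $\diam(T_n)\le 2n$. Beyond the left-handed bookkeeping flagged in the first paragraph, no essential difficulty arises; that is the step I would verify most carefully.
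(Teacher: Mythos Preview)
Your proof is correct and follows essentially the same line as the paper's. Both arguments exploit the ``left-handed'' edge structure to show that each $\omega=\omega_0 s_1\cdots s_k\in V_n$ is joined to $\omega_0$ by the chain of suffixes $\omega_0,\ \omega_0 s_k,\ \omega_0 s_{k-1}s_k,\ \dots$, and then build a spanning tree in which every vertex lies within distance $n$ of $\omega_0$; the diameter bound $2n$ follows by the triangle inequality. The only cosmetic difference is that the paper packages this as an induction on $n$ (attaching each $v\in V_n\setminus V_{n-1}$ as a leaf to $T_{n-1}$ and concluding $\diam(T_n)\le\diam(T_{n-1})+2$), whereas you take a BFS spanning tree of $G_n$ directly.
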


\begin{proof}
We proceed by induction on $n$. 
The base case is $G_1$, which is the star-graph with leaves 
$\omega_0 S \setminus \lbrace \omega_0 \rbrace$. 
For $n \geq 2$, and given $T_{n-1}$, 
we choose for all $v \in V_n \setminus V_{n-1}$ 
an $n$-tuple $(s_1 , \ldots , s_n)$ such that 
$v = \omega_0 s_1 \cdots s_n$. 
Writing $s_v = s_1$, $g_v = s_2 \cdots s_n$, 
we have $(\omega_0 g_v ,v)=(\omega_0 g_v ,\omega_0 s_v g_v) \in E_n$ and $\omega_0 g_v \in V_{n-1}$. 
Define the graph $T_n$ on $V_n$ by 
$E(T_n) = E(T_{n-1}) \cup \lbrace (\omega_0 g_v ,v) 
: v \in V_n \setminus V_{n-1} \rbrace$. 
Then $T_n$ is a tree on $V_n$ (since $T_{n-1}$ is a tree 
on $V_{n-1}$ and the elements of $V_n \setminus V_{n-1}$ 
are leaves of $T_n$). Moreover every element of  
$V_n \setminus V_{n-1}$ is at distance $1$ in $T_n$ 
from an element of $V_{n-1}$, 
so $\diam(T_n) \leq \diam(T_{n-1}) + 2 \leq 2n$. 
\end{proof}

\begin{lem} \label{FinExtGenLem}
If $\Gamma$ is finitely generated by the symmetric set $S$ 
then for any $\omega_0 \in \Omega$, 
$\mathcal{S}(\Gamma,\Omega)$ is finitely generated by 
$S \cup T_{S,\omega_0}$, where: 
\begin{center}
$T_{S,\omega_0} = \lbrace (\omega_0 \;\omega_0 s) : s \in S \rbrace \subseteq \FSym (\Gamma)$. 
\end{center}
\end{lem}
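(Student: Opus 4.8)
The plan is to show that $\mathcal{S}(\Gamma,\Omega)$ is generated by $S \cup T_{S,\omega_0}$ by verifying that the subgroup $H$ they generate contains both direct factors of the semidirect product, namely $\Gamma$ (embedded as $\{1\} \rtimes \Gamma$) and $\FSym(\Omega)$ (embedded as $\FSym(\Omega) \rtimes \{1\}$). Containment of $\Gamma$ is immediate since $S \subseteq H$ generates $\Gamma$. The work is therefore entirely in showing $\FSym(\Omega) \leq H$.

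First I would recall that $\FSym(\Omega)$ is generated by \emph{all} transpositions $(\alpha \; \beta)$ with $\alpha,\beta \in \Omega$ distinct, since $\Omega$ is the orbit $\omega_0 \Gamma$. In fact, by Remark \ref{symgenrmrk} applied to any finite tree on a finite subset of $\Omega$, it suffices to produce, for each $\omega \in \Omega \setminus \{\omega_0\}$, the transposition $(\omega_0 \; \omega) \in H$: then for arbitrary distinct $\alpha = \omega_0 g$, $\beta = \omega_0 h$ we get $(\alpha \; \beta) = (\omega_0 \; \beta)^{(\omega_0 \; \alpha)} \in H$ once both $(\omega_0 \; \alpha)$ and $(\omega_0 \; \beta)$ lie in $H$, and these generate $\FSym(\Omega)$. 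The key computation is the conjugation identity: for $g \in \Gamma$ and $s \in S$, conjugating the transposition $(\omega_0 \; \omega_0 s) \in T_{S,\omega_0}$ by the element $g \in \Gamma \leq \mathcal{S}(\Gamma,\Omega)$ yields $(\omega_0 g \; \omega_0 s g)$, because the $\Gamma$-action permutes the points of $\Omega$ and hence conjugates a transposition to the transposition of the images of its two points. Thus all the transpositions $(\omega_0 g \; \omega_0 s g)$ with $s \in S$, $g \in \Gamma$ lie in $H$.

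Now I would run an induction on $n$ to show $(\omega_0 \; \omega) \in H$ for every $\omega \in B_{S,\omega_0}(n)$, $\omega \neq \omega_0$, which suffices since $\Gamma \curvearrowright \Omega$ is transitive and $S$ generates $\Gamma$. For the inductive step, given $\omega = \omega_0 s_1 \cdots s_n$ with $s_i \in S$, write $\omega' = \omega_0 s_1 \cdots s_{n-1}$ and $h = s_1 \cdots s_{n-1} \in B_S(n-1)$, so that $\omega = \omega' s_n$ and the edge $(\omega', \omega) = (\omega_0 h, \omega_0 s_n h)$ belongs to $E_n$ in the notation of Lemma \ref{permtreediamlem}; by the conjugation identity of the previous paragraph, $(\omega' \; \omega) \in H$. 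By the inductive hypothesis $(\omega_0 \; \omega') \in H$, and then $(\omega_0 \; \omega) = (\omega' \; \omega)^{(\omega_0 \; \omega')} \in H$ (using $(\omega_0 \; \omega')^{-1} (\omega' \; \omega)(\omega_0 \; \omega') = (\omega_0 \; \omega)$, the standard conjugate-of-a-transposition formula). This closes the induction.

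I do not anticipate a serious obstacle here: the only points requiring care are bookkeeping — checking that the conjugation of a transposition by $g \in \Gamma$ inside $\FSym(\Omega) \rtimes \Gamma$ really does move both points by the $\Gamma$-action (which is exactly the definition of $\phi$), and that the transpositions $(\omega_0 \; \omega)$ for $\omega$ ranging over all of $\Omega$ do generate $\FSym(\Omega)$ (which follows from Remark \ref{symgenrmrk} since any finitely supported permutation is supported on a finite subset $\Sigma \ni \omega_0$, and a star tree on $\Sigma$ centered at $\omega_0$ has edge-transpositions exactly the $(\omega_0 \; \omega)$). Finiteness of the generating set $S \cup T_{S,\omega_0}$ is clear because $S$ is finite. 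One could alternatively phrase the induction directly in terms of the maximal tree $T_n$ of Lemma \ref{permtreediamlem}, walking up the tree from $\omega_0$, but the computation is the same.
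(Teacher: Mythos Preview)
Your overall strategy is the paper's: reduce to $\FSym(\Omega)\leq H$, observe that conjugating $(\omega_0\;\omega_0 s)$ by $g\in\Gamma$ gives $(\omega_0 g\;\omega_0 sg)$, and argue that such transpositions generate. However, the inductive step as written does not go through. With $h=s_1\cdots s_{n-1}$ you have $\omega'=\omega_0 h$ and $\omega=\omega_0 h s_n$, so $(\omega'\;\omega)=(\omega_0 h\;\omega_0 h s_n)$, \emph{not} $(\omega_0 h\;\omega_0 s_n h)$. The pair $(\omega_0 h,\omega_0 s_n h)$ is indeed an edge of $E_n$ and its transposition lies in $H$ by the conjugation identity, but it is not the transposition $(\omega'\;\omega)$ you need. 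Since the action is on the right, the conjugation identity produces only transpositions of the form $(\omega_0 g\;\omega_0 sg)$ with the generator on the \emph{left} of $g$; the pair $(\omega_0 h,\omega_0 hs_n)$ is not of this shape in general, so your induction stalls.

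The fix is to peel off the \emph{first} letter rather than the last: set $\tilde h=s_2\cdots s_n\in B_S(n-1)$ and $\tilde\omega=\omega_0\tilde h$. Then $\omega=\omega_0 s_1\tilde h$, so
\[
(\tilde\omega\;\omega)=(\omega_0\tilde h\;\omega_0 s_1\tilde h)=(\omega_0\;\omega_0 s_1)^{\tilde h}\in H,
\]
and by induction $(\omega_0\;\tilde\omega)\in H$, whence $(\omega_0\;\omega)=(\tilde\omega\;\omega)^{(\omega_0\;\tilde\omega)}\in H$ exactly as you intended. This is precisely the choice made in the paper: in Lemma~\ref{permtreediamlem} the new vertex $v=\omega_0 s_1\cdots s_n$ is attached via the edge $(\omega_0 g_v,\omega_0 s_v g_v)$ with $s_v=s_1$ and $g_v=s_2\cdots s_n$, and the proof of Lemma~\ref{FinExtGenLem} then reads off $s_e=(\omega_0 h\;\omega_0 sh)=(\omega_0\;\omega_0 s)^h$ for each tree edge. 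With this one-line correction your argument is complete and essentially identical to the paper's.
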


\begin{proof}
Let $\sigma \in \FSym(\Omega)$ and $g \in \Gamma$. 
Certainly $g \in \langle S \rangle$. 
By transitivity of $\Gamma \curvearrowright \Omega$, 
there exists $n \in \mathbb{N}$ such that 
$\supp(\sigma) \subseteq \Sym \big( B_{S,\omega_0}(n) \big)$. 
Let $T_n$ be as in Lemma \ref{permtreediamlem}. 
By Remark \ref{symgenrmrk}, 
$\Sym \big( B_{S,\omega_0}(n) \big)$ is generated 
by $S_{T_n} = \lbrace s_e : e \in E(T_n) \rbrace$ 
as in the statement of Theorem \ref{Symtreepresthm}. 
For $e \in E(T_n)$ there exist $h \in \Gamma$ and $s \in S$ 
such that $s_e = (\omega_0 h \; \omega_0 sh) = (\omega_0 \; \omega_0 s)^h 
\in \langle S \cup T_{S,\omega_0} \rangle$. 
\end{proof}

The upper bound in (\ref{RFextneqn1}) will follow from 
the next construction. 

\begin{lem} \label{SDprodgenlem}
Let $\Gamma$, $N$ be groups, 
let $\phi : \Gamma \rightarrow \Aut (N)$ be a homomorphism, 
and let $S \subseteq \Gamma$, $T \subseteq N$ be symmetric subsets. 
Then within $N \rtimes_{\phi} \Gamma$, 
\begin{equation}
B_{S \cup T} (n) \subseteq B_{C(S,T,n)} (n) \cdot B_S (n)
\end{equation}
where $C(S,T,n) = \lbrace t^w : w \in B_S (n-1),t\in T \rbrace 
\subseteq N$. 
\end{lem}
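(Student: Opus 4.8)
The plan is to prove the containment $B_{S \cup T}(n) \subseteq C(S,T,n) \cdot B_S(n)$ by induction on $n$, exploiting the semidirect product normal form. Every element of $N \rtimes_\phi \Gamma$ is uniquely written as $u g$ with $u \in N$, $g \in \Gamma$, and multiplication is $(u_1 g_1)(u_2 g_2) = u_1 (u_2)^{g_1^{-1}} g_1 g_2$ where I write $n^w = \phi(w^{-1})(n)$ for the (right) action; the key point is that conjugating an element of $N$ by an element of $\Gamma$ keeps it in $N$. The base case $n=0$ is trivial (both sides are $\{e\}$, with $B_S(0) = \{e\}$ and $C(S,T,0)$ containing $e$ as $T$ is symmetric or at worst we interpret the empty-exponent-set case as $\{e\}$).

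For the inductive step, take an element of $B_{S \cup T}(n)$; it has the form $x \cdot a$ where $x \in B_{S \cup T}(n-1)$ and $a \in S \cup T \cup \{e\}$. By induction $x = c w$ with $c \in C(S,T,n-1) \subseteq C(S,T,n)$ and $w \in B_S(n-1)$. There are two cases. If $a = s \in S$, then $xa = c w s$, and $ws \in B_S(n)$, so $xa \in C(S,T,n) \cdot B_S(n)$ directly. If $a = t \in T$, then $xa = c w t = c (w t w^{-1}) w = c \, t^{w^{-1}} w$ (using that $w$ normalizes $N$), so I need $t^{w^{-1}} \in C(S,T,n)$: since $w \in B_S(n-1)$ and $S$ is symmetric, $w^{-1} \in B_S(n-1)$, hence $t^{w^{-1}} \in \{t^{w'} : w' \in B_S(n-1), t \in T\} = C(S,T,n)$. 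Finally $c \cdot t^{w^{-1}}$ lies in $C(S,T,n)$ provided $C(S,T,n)$ is closed under products — which it is not as a set, so instead I should phrase the induction hypothesis slightly more carefully, carrying along that the $C$-part is a \emph{product} of at most $n-1$ conjugates of elements of $T$ by elements of $B_S(n-1)$.

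The cleanest formulation: prove by induction that every $y \in B_{S \cup T}(n)$ can be written $y = t_1^{w_1} \cdots t_k^{w_k} \cdot w$ with $k \le n$, each $t_i \in T$, each $w_i \in B_S(n-1)$, and $w \in B_S(n)$. (The notation ``$C(S,T,n) \cdot B_S(n)$'' in the statement is understood as the set of all such products, i.e. $\langle C(S,T,n)\rangle$-style words of bounded length times $B_S(n)$; if the paper means the literal pointwise product of the two subsets then one additionally notes $N \cap B_{S\cup T}(n)$ is generated by $C(S,T,n)$ and absorb the bound into the statement of the following result.) The step-up is exactly the two-case analysis above: appending $s \in S$ lengthens the trailing $B_S$-word; appending $t \in T$ converts to $t^{w^{-1}}$ and prepends it to the list of conjugates, while the trailing word stays in $B_S(n-1) \subseteq B_S(n)$.

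The only real subtlety — and the step I expect to need the most care — is bookkeeping the radii: one must check that after $n$ applications the conjugating elements $w_i$ never escape $B_S(n-1)$ and the trailing word never escapes $B_S(n)$, and that the symmetry of $S$ (needed to pass from $w$ to $w^{-1}$) and of $T$ are genuinely used. Everything else is the routine semidirect-product normal-form computation $w t = (t^{w^{-1}}) w$, which I would state once and then suppress.
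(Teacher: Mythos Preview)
Your approach is correct and is essentially the same as the paper's: the paper simply writes $g = t_1 s_1 \cdots t_n s_n$ (padding with identities) and pushes all the $s_i$ to the right in one shot, obtaining $g = h \cdot (s_1 \cdots s_n)$ with $h = t_1 \, t_2^{s_1^{-1}} \, t_3^{(s_1 s_2)^{-1}} \cdots t_n^{(s_1\cdots s_{n-1})^{-1}}$, which is your inductive argument unrolled. Note that the right-hand side of the statement is $B_{C(S,T,n)}(n) \cdot B_S(n)$, not $C(S,T,n)\cdot B_S(n)$, so your worry about $C(S,T,n)$ not being closed under products is moot---your ``cleanest formulation'' (a product of at most $n$ conjugates, each in $C(S,T,n)$, times an element of $B_S(n)$) is exactly what is being asserted.
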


\begin{proof}
For $g \in B_{S \cup T} (n)$, 
there exist $s_1 , \ldots , s_n \in S \cup \lbrace 1 \rbrace$ 
and $t_1 , \ldots , t_n \in T \cup \lbrace 1 \rbrace$ 
such that $g = t_1 s_1 \cdots t_n s_n = h\cdot (s_1 \cdots s_n)$, 
where: 
\begin{equation*}
h = t_1 \big( t_2 ^{s_1 ^{-1}} \big)
\big( t_3 ^{(s_1 s_2) ^{-1}} \big) 
\cdots \big( t_n ^{(s_1 \cdots s_{n-1}) ^{-1}} \big)
\end{equation*}
and for $1 \leq k \leq n$, 
$t_k ^{(s_1 \cdots s_{k-1})^{-1}} \in C(S,T,n)$. 
\end{proof}

\begin{propn} \label{RFextnprop}
Let $I(n)=\Sym\big( B_{S,\omega_0}(n)\big) \cdot B_S (n)
\subseteq \mathcal{S}(\Gamma,\Omega)$ 
and let $(\pi_n,\theta_n)$ be a local embedding 
of $\big(B_S(n), B_{S,\omega_0}(n)\big)$ into $(Q_n,X_n)$. 
Then there is a local embedding of $I(n)$ into $\mathcal{S}(Q_n,X_n)$. 
\end{propn}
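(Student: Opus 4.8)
The plan is to build the local embedding of $I(n) = \Sym\big(B_{S,\omega_0}(n)\big) \cdot B_S(n)$ into $\mathcal{S}(Q_n, X_n) = \FSym(X_n) \rtimes Q_n$ directly from the given pair $(\pi_n, \theta_n)$. An element of $I(n)$ has the form $\sigma g$ with $\sigma \in \Sym(B_{S,\omega_0}(n))$ and $g \in B_S(n)$; note this decomposition is \emph{unique}, since $\Sym(B_{S,\omega_0}(n)) \leq \FSym(\Omega)$ and we are inside the semidirect product, so the $\FSym(\Omega)$-component and the $\Gamma$-component are each determined. The map I would define is $\Psi(\sigma g) = \theta_*(\sigma) \cdot \pi_n(g)$, where $\theta_*(\sigma) \in \FSym(X_n) \leq \FSym(X_n)$ is the permutation obtained by transporting $\sigma$ across the injection $\theta_n : B_{S,\omega_0}(n) \hookrightarrow X_n$ (i.e.\ $\theta_*(\sigma)$ acts as $\theta_n \sigma \theta_n^{-1}$ on $\theta_n(B_{S,\omega_0}(n))$ and fixes the rest of $X_n$). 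Since $\theta_n$ is injective, $\theta_*$ is an injective homomorphism $\Sym(B_{S,\omega_0}(n)) \hookrightarrow \FSym(X_n)$.

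Next I would verify injectivity of $\Psi$ on $I(n)$. If $\Psi(\sigma g) = \Psi(\sigma' g')$ then comparing $\FSym(X_n)$- and $Q_n$-components gives $\theta_*(\sigma) = \theta_*(\sigma')$ and $\pi_n(g) = \pi_n(g')$; the former yields $\sigma = \sigma'$ as $\theta_*$ is injective, and the latter yields $g = g'$ because $\pi_n$ is a local embedding (hence injective) on $B_S(n)$. So $\Psi$ is injective on all of $I(n)$, not merely on a ball.

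The main work is checking the partial-homomorphism property: if $\sigma g, \sigma' g' \in I(n)$ and their product $(\sigma g)(\sigma' g') = \big(\sigma \cdot {}^{g}\sigma'\big)\, (gg')$ also lies in $I(n)$, then $\Psi$ respects the product. Here ${}^{g}\sigma'$ denotes the action of $g$ via $\phi$, i.e.\ $\omega \mapsto \omega$ conjugated by the permutation of $\Omega$ induced by $g$. The target-side product is $\theta_*(\sigma)\,\pi_n(g)\,\theta_*(\sigma')\,\pi_n(g') = \big(\theta_*(\sigma) \cdot {}^{\pi_n(g)}\theta_*(\sigma')\big)\,\big(\pi_n(g)\pi_n(g')\big)$. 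So I must show two things under the hypothesis that the product lies in $I(n)$: first, $\pi_n(g)\pi_n(g') = \pi_n(gg')$ — this needs $gg' \in B_S(n)$, which follows since the $\Gamma$-component of a product lying in $I(n)$ must be in $B_S(n)$, and then it is exactly the partial-homomorphism property of $\pi_n$ on $B_S(n)$; second, the $\FSym$-components agree, i.e.\ $\theta_*\big(\sigma \cdot {}^{g}\sigma'\big) = \theta_*(\sigma) \cdot {}^{\pi_n(g)}\theta_*(\sigma')$. Since $\theta_*$ is a homomorphism this reduces to the single equivariance identity
\begin{equation*}
\theta_*\big({}^{g}\sigma'\big) = {}^{\pi_n(g)}\big(\theta_*(\sigma')\big),
\end{equation*}
which in turn unwinds to the requirement that $\theta_n\big(\omega g\big) = \theta_n(\omega)\,\pi_n(g)$ for the relevant $\omega$ in $\supp(\sigma')$ and its $g$-translate — and \emph{this is precisely the compatibility condition \eqref{LEFactioneqn} defining a local embedding of actions}. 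The delicate point, and the step I expect to be the main obstacle, is the bookkeeping of \emph{which} elements $\omega \in B_{S,\omega_0}(n)$ actually need to satisfy \eqref{LEFactioneqn}: one must argue that since $\sigma' g'$ and the product lie in $I(n)$, every $\omega$ relevant to evaluating ${}^{g}\sigma'$ lies in $B_{S,\omega_0}(n)$ \emph{and} has its image $\omega g$ also in $B_{S,\omega_0}(n)$, so that \eqref{LEFactioneqn} applies and is not merely vacuous. Concretely, $\supp({}^{g}\sigma') = \supp(\sigma')\cdot g$, and for the product to have $\FSym$-component supported in $B_{S,\omega_0}(n)$ one needs $\supp(\sigma') \cdot g \subseteq B_{S,\omega_0}(n)$; combined with $\supp(\sigma') \subseteq B_{S,\omega_0}(n)$ this gives both endpoints of each relevant translation in the ball, so \eqref{LEFactioneqn} does the job. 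Once this is in place, the two $\FSym$-components match on $\theta_n(B_{S,\omega_0}(n))$ and both fix the complement, hence are equal, completing the verification.
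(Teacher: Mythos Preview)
Your approach is exactly the paper's: the same map $\Psi(\sigma g)=\theta_*(\sigma)\pi_n(g)$, the same componentwise injectivity check, and the same reduction of the partial-homomorphism property to the equivariance identity $\theta_*({}^{g}\sigma')={}^{\pi_n(g)}\theta_*(\sigma')$, which is then extracted from the LEF-action condition \eqref{LEFactioneqn} applied at points of $\supp(\sigma')$. The paper verifies that identity by an explicit three-case analysis on $x\in X_n$, the delicate case being $x=\theta_n(\omega)\in\theta_n(B(n))\cap\theta_n(B(n))\pi_n(g)^{-1}$ with $\omega g\notin B(n)$; your support-based sketch covers this implicitly, but to make your final sentence rigorous you should note that, since every $\omega'\in\supp(\sigma')$ has both $\omega'$ and $\omega'g^{-1}$ in $B(n)$, the LEF condition gives $\supp\big({}^{\pi_n(g)}\theta_*(\sigma')\big)=\theta_n(\supp(\sigma'))\pi_n(g)^{-1}=\theta_n(\supp(\sigma')g^{-1})=\supp\big(\theta_*({}^{g}\sigma')\big)$ \emph{exactly}, after which agreement on the common support follows and everything else is fixed. (One convention slip: with right actions and your product formula, $\supp({}^{g}\sigma')=\supp(\sigma')\cdot g^{-1}$, not $\supp(\sigma')\cdot g$; the paper's condition (ii$'$) is $\supp(\sigma')\subseteq B(n)\cap B(n)g$.)
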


\begin{proof}
Write $B(n) = B_{S,\omega_0}(n)$. 
Let $\tilde{\theta}_n : \Sym\big( B(n)\big) 
\rightarrow \Sym(X_n)$ be the (injective) homomorphism 
induced by the injection 
$\theta_n : B(n) \rightarrow X_n$; that is: 
\begin{center}
$[\theta_n (\omega)]\tilde{\theta}_n (\sigma) = \theta_n (\omega \sigma)$ 
and $[x]\tilde{\theta}_n (\sigma) = x$ 
for $x \in X_n \setminus \im (\theta_n)$. 
\end{center}
Let $\Phi_n : I(n) \rightarrow \mathcal{S}(Q_n,X_n)$ be given by 
$\Phi_n (\sigma \cdot g)=\tilde{\theta}_n (\sigma) \cdot \pi_n (g)$. 
$\Phi_n$ is clearly injective; 
we claim that it is a local embedding. 

For $\sigma_1 ,\sigma_2 \in\Sym\big( B(n)\big)$, 
$g_1 , g_2 \in B_S (n)$, we have $(\sigma_1 g_1) (\sigma_2 g_2) 
= \sigma_1 \sigma_2 ^{g_1 ^{-1}} g_1 g_2 \in I(n)$ 
iff (i) $g_1 g_2 \in B_S (n)$ and (ii) 
$\supp (\sigma_1 \sigma_2 ^{g_1 ^{-1}}) 
\subseteq B(n)$. 
Since $\supp (\sigma_1) \subseteq B(n)$, 
(ii) is equivalent to 
(ii') $\supp (\sigma _2) \subseteq B(n) g_1 \cap B(n)$. 
Under these conditions, 
\begin{align*}
\Phi_n \big( (\sigma_1 g_1) (\sigma_2 g_2) \big) &= 
\Phi_n \big( \sigma_1 \sigma_2 ^{g_1 ^{-1}} g_1 g_2 \big) \\
&= \tilde{\theta}_n \big( \sigma_1 \sigma_2 ^{g_1 ^{-1}} \big) \pi_n (g_1 g_2) \\
& = \tilde{\theta}_n (\sigma_1) \tilde{\theta}_n (\sigma_2 ^{g_1 ^{-1}}) \pi_n (g_1)\pi_n (g_2) \text{ (by (i))}
\end{align*}
while: 
\begin{align*}
\Phi_n (\sigma_1 g_1)\Phi_n (\sigma_2 g_2)
& = \tilde{\theta}_n (\sigma_1) \pi_n (g_1) 
\tilde{\theta}_n (\sigma_2) \pi_n (g_2) \\
& = \tilde{\theta}_n (\sigma_1)  
\tilde{\theta}_n (\sigma_2)^{\pi_n (g_1)^{-1}} \pi_n (g_1)\pi_n (g_2)
\end{align*}
so, writing $\tau_1 = \tilde{\theta}_n (\sigma_2)^{\pi_n (g_1)^{-1}}, \tau_2 = \tilde{\theta}_n (\sigma_2 ^{g_1 ^{-1}}) \in \Sym(X_n)$, 
it suffices to check that $\tau_1 = \tau_2$. 
Since, by (ii'), $\supp (\sigma_2) 
\subseteq B(n) g_1 \cap B(n)$, 
\begin{center}
$\supp (\tilde{\theta}_n (\sigma_2)) 
 \subseteq \theta_n (B(n)) \pi_n (g_1) \cap \theta_n (B(n))$; 
$\supp (g_1 \sigma_2 g_1 ^{-1}) \subseteq B(n) g_1 ^{-1} \cap B(n)$
\end{center}
so that: 
\begin{equation} \label{tau1eqn}
\supp ( \tau_1 )
\subseteq \theta_n (B(n))\pi_n (g_1)^{-1} \cap \theta_n (B(n))
\end{equation}
and: 
\begin{align}
\supp (\tau_2) 
& \subseteq \theta_n (B(n)g_1 ^{-1} \cap B(n)) \label{tau2eqn1} \\
& \subseteq \theta_n (B(n))\pi_n (g_1)^{-1} \label{tau2eqn3}
\cap \theta_n (B(n))
\end{align} 
We verify that for all $x \in X_n$, $(x)\tau_1 = (x)\tau_2$. 
First suppose $x\in\theta_n (B(n)g_1 ^{-1}\cap B(n))$, 
so that there are $\omega , \tilde{\omega} \in B(n)$ 
with $\omega = \tilde{\omega} g_1 ^{-1}$ are such that 
$x = \theta_n (\omega) = \theta_n (\tilde{\omega} g_1 ^{-1})$ 
so $x = \theta_n (\tilde{\omega}) \pi_n (g_1 ^{-1})$. 
Thus: 
\begin{center}
$\tau_1 : x \mapsto 
\theta_n (\tilde{\omega}) \tilde{\theta}_n (\sigma_2) \pi_n (g_1)^{-1}$
\end{center} 
while: 
\begin{center}
$\tau_2 :x \mapsto 
\theta_n (\tilde{\omega} \sigma_2 g_1 ^{-1}) 
= \theta_n (\tilde{\omega} \sigma_2) \pi_n (g_1 ^{-1})
=\theta_n(\tilde{\omega})\tilde{\theta}_n(\sigma_2)\pi_n (g_1 ^{-1})$
\end{center}
(as $\supp (\sigma_2) \subseteq B(n)g_1 \cap B(n)$, 
so either $\tilde{\omega} \notin \supp (\sigma_2)$ 
or $\tilde{\omega}\sigma_2,\tilde{\omega}\sigma_2g_1 ^{-1}\in B(n)$). 

Next suppose  $x \in X_n \setminus \theta_n (B(n))$. 
Then by (\ref{tau1eqn}) and (\ref{tau2eqn3}), 
$\tau_1$ and $\tau_2$ fix $x$. 
Finally suppose $x\in\theta_n(B(n))\setminus\theta_n(B(n)g_1 ^{-1})$. 
Then by (\ref{tau2eqn1}) $\tau_2$ fixes $x$. 
If in addition $x \notin \theta_n (B(n)) \pi_n (g_1)^{-1}$, 
then by (\ref{tau1eqn}) $\tau_1$ fixes $x$ as well. 
Therefore suppose that there exist $\omega,\tilde{\omega} \in B(n)$ 
such that 
$x =\theta_n (\omega) =\theta_n (\tilde{\omega})\pi_n(g_1)^{-1}$. 
Then $\tilde{\omega} g_1 ^{-1} \notin B(n)$ 
(else $\theta_n (\tilde{\omega} g_1 ^{-1})
=\theta_n (\tilde{\omega})\pi_n(g_1)^{-1}
=x=\theta_n (\omega)$, so $\omega = \tilde{\omega}g_1 ^{-1}$ 
and $x \in \theta_n (B(n)g_1 ^{-1})$, a contradiction). 
Thus $\tilde{\omega} \notin B(n)g_1 \supseteq \supp(\sigma_2)$, so: 
\begin{align*} 
\tau_1 : x = \theta_n (\tilde{\omega})\pi_n (g_1)^{-1} 
& \mapsto \theta_n (\tilde{\omega})\tilde{\theta}_n (\sigma_2)\pi_n (g_1)^{-1} \\
& = \theta_n (\tilde{\omega}\sigma_2) \pi_n (g_1)^{-1} \\
& = \theta_n (\tilde{\omega}) \pi_n (g_1)^{-1} \\
& = x
\end{align*}
\end{proof}

\begin{proof}[Proof of Theorem \ref{RFextnThm}]
Let $I(n)$ be as in Proposition \ref{RFextnprop}. 
To prove the upper bound in (\ref{RFextneqn1}), 
it suffices to show that 
$B_{S \cup T_{S,\omega_0}} (n) \subseteq I(n)$. 
Taking $T = T_{S,\omega_0}$ in Lemma \ref{SDprodgenlem}, 
it suffices to show that for 
$t \in T_{S,\omega_0}$ and $w \in B_S (n-1)$, 
$t^w \in \Sym \big( B_{S,\omega_0}(n) \big)$.  
This is so, since letting $s \in S$ 
be such that $t = (\omega_0 \; \omega_0 s)$, 
so that $\supp \big( t ^{w} \big) 
= \lbrace \omega_0 w,\omega_0 s w \rbrace 
\subseteq B_{S,\omega_0}(n)$. 

For the lower bound, we apply Proposition \ref{LERFRFsubgrp}, 
replacing $\Gamma$ by $\mathcal{S}(\Gamma,\Omega)$ 
and $S$ by $S \cup T_{S,\omega_0}$ as in Lemma \ref{FinExtGenLem}, 
and taking $H_n = \Sym\big( B_{S,\omega_0}(n)\big)$. 
Let the graph $G_n$ and the tree $T_n$ be as in Lemma \ref{permtreediamlem}. 
Let $H_n = \big\langle S_{T_n} \mid R_{T_n} \big\rangle$ 
be as in Theorem \ref{Symtreepresthm}, 
and set $S^{\prime\prime} _n = S^{\prime} _n = S_{T_n}$. 
By the description of $R_{T_n}$ from Theorem \ref{Symtreepresthm}, 
we can take $g(n) = 8$ for all $n$. 
Further, by the definition of the graph $G_n$ 
from Lemma \ref{permtreediamlem} 
and the generating set $S_{T_n}$ from Theorem \ref{Symtreepresthm}, 
for any $s^{\prime\prime} \in S^{\prime\prime} _n = S_{T_n}$, 
there exist $h \in B_S (n-1)$ and $s \in S$ such that: 
\begin{center}
$s^{\prime\prime} = (\omega_0 h \; \omega_0 sh) = (\omega_0 \; \omega_0 s)^h \in B_{S \cup T_{S,\omega_0}} (2n-1)$. 
\end{center}
We may therefore take $f(n) = 2n-1$ in Proposition \ref{LERFRFsubgrp} 
and obtain that for all $n$, 
\begin{center}
$\mathcal{L}_{\mathcal{S}(\Gamma,\Omega)} ^{S\cup T_{S,\omega_0}}(16n-8) 
\geq \mathcal{R}_{\Sym( B_{S,\omega_0}(n))} ^{S_{T_n}} (8)$. 
\end{center}
The right-hand side of the last inequality is the order of a 
quotient of $\Sym\big( B_{S,\omega_0}(n)\big)$, 
into which $B_{S_{T_n}} (8)$ injects. 
For $n \geq 5$, $\lvert B_{S,\omega_0}(n) \rvert \geq 5$ 
so $\Sym\big( B_{S,\omega_0}(n)\big)$ has no proper 
quotients of order greater than $2$, 
while $\lvert B_{S_{T_n}} (8)\rvert \geq 8$, so: 
\begin{center}
$\mathcal{R}_{\Sym( B_{S,\omega_0}(n))} ^{S_{T_n}} (8) 
= \big\lvert \Sym\big( B_{S,\omega_0}(n)\big) \big\rvert
=  \gamma_{\Gamma,\Omega} ^S (n)!$. 
\end{center}
\end{proof}

\begin{rmrk} \label{SymEnrRmrks}
\begin{itemize}
\item[(i)] The conclusions of Theorem \ref{RFextnThm} are also valid 
for the alternating enrichment $\mathcal{A}(\Gamma,\Omega)$. 
The upper bounds hold simply because 
$\mathcal{A}(\Gamma,\Omega) \leq \mathcal{S}(\Gamma,\Omega)$. 
The lower bounds follow by applying 
Proposition \ref{LERFRFsubgrp} to the presentations 
of the alternating groups, induced from those 
appearing in Theorem \ref{Symtreepresthm} for the symmetric groups 
by the Reidemeister-Schreier procedure. 

\item[(ii)] For $\Gamma \curvearrowright \Omega$ we have an 
induced action $\mathcal{S}(\Gamma,\Omega) \curvearrowright \Omega$. 
A straightforward extension of the proof of 
Proposition \ref{RFextnprop} shows that 
if the former action is LEF then so is the latter 
(with effective control of the order of the finite action 
required to admit a local embedding of balls). 
\end{itemize}
\end{rmrk}

\section{Elementary enrichments} \label{ElemExtSect}

Let $E_{v,w}(r)$, $E_{v,w} = E_{v,w}(1)$ and $E_V (R)$ 
be as in \ref{SLsubsubsect}. 
For $W \subseteq V$ there is a natural embedding 
$E_W (R) \leq E_V (R)$, induced by acting as the identity on 
all basis vectors in $V \setminus W$. 
If $X$ is the ascending union of finite subsets 
$V_n \subseteq V$, then: 
\begin{equation*}
E_V (R) = \cup_n E_{V_n} (R)
\end{equation*}  
Now let $\Gamma$ be a group, acting transitively on the set $V$. 
$\Gamma$ acts by automorphisms on $E_V (R)$ via: 
\begin{equation*}
(E_{v,w}(r))^g = E_{vg,wg}(r)
\end{equation*}
We may therefore define the \emph{$R$-elementary enrichment} 
of the action $\Gamma \curvearrowright V$ over $R$ to be the group: 
\begin{center}
$\mathcal{E}(\Gamma,V,R) = E_V (R) \rtimes \Gamma$. 
\end{center}
In case $V = \Gamma$ and the action is regular, 
we write $\mathcal{E} (\Gamma,R)$ for short. 

Passing from $\Gamma$ to $\mathcal{E}(\Gamma,V,R)$ 
does not in general preserve residual finiteness. 

\begin{propn} \label{elemnotRFprop}
Suppose $\Gamma$ is finitely generated and $V$ is infinite. 
Then $\mathcal{E}(\Gamma,V,R)$ 
is not residually finite. 
\end{propn}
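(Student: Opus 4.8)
The plan is to exhibit an infinite simple (or at least perfect, characteristically simple) subgroup of $\mathcal{E}(\Gamma,V,R)$, which immediately obstructs residual finiteness: a residually finite group cannot contain an infinite simple subgroup, nor an infinite subgroup all of whose proper quotients are trivial. The natural candidate is the elementary subgroup $E_V(R)$ itself, viewed as the normal complement $E_V(R) \vartriangleleft \mathcal{E}(\Gamma,V,R)$. Since $V$ is infinite and $\Gamma \curvearrowright V$ is transitive (hence $V$ is countable, being a $\Gamma$-orbit for $\Gamma$ finitely generated), we may write $V$ as an ascending union of finite subsets $V_n$ with $|V_n| \geq 3$, so that $E_V(R) = \bigcup_n E_{V_n}(R)$ by the displayed identity preceding the statement.

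First I would recall the classical fact that for a commutative unital ring $R$ and $n \geq 3$, the group $E_n(R) = E_{V_n}(R)$ is perfect: this follows from the commutator relation $[E_{v,x}(r),E_{x,w}(1)] = E_{v,w}(r)$ recorded in Subsection \ref{SLsubsubsect}, which expresses every generating transvection as a commutator (using $|V_n| \geq 3$ to find an intermediate index $x$). Taking the direct limit, $E_V(R)$ is perfect, so it admits no nontrivial finite abelian quotient. To upgrade this to the full obstruction, I would argue that $E_V(R)$ has no proper finite-index subgroup at all, equivalently no nontrivial finite quotient: any finite quotient $E_V(R) \to F$ would, by finiteness of $F$, have to factor through on each $E_{V_n}(R)$ in a way that is eventually "stable", and one shows the normal closure considerations force the kernel to contain every transvection. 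Concretely, if $N \vartriangleleft E_V(R)$ has finite index, then $N \cap E_{V_n}(R)$ has finite index in $E_{V_n}(R)$ for each $n$; since $E_{V_n}(R)$ modulo its centre is simple-ish (for $R$ a field, $\mathrm{PSL}_n(R)$ is simple; in general one uses that $E_n(R)$ has no noncentral normal subgroups of finite index, via perfectness plus the structure of congruence/normal subgroups), and the centres inject compatibly, a short argument shows $N = E_V(R)$.

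The cleanest route avoids ring-theoretic subtleties: it suffices to show $\mathcal{E}(\Gamma,V,R)$ contains an \emph{infinite} subgroup with no proper finite-index subgroups, and for this I would use that $E_V(R)$ contains $E_V(\mathbb{Z}') $-like behaviour — more simply, pick a single nonzero $r \in R$ and consider the subgroup generated by $\{E_{v,w}(r) : v,w \in V \text{ distinct}\}$, or just observe directly: $E_V(R)$ is the increasing union of the perfect groups $E_{V_n}(R)$, each contained in the next as a subgroup on which the inclusion is the standard block embedding, and the standard embedding $E_n(R) \hookrightarrow E_{n+1}(R)$ becomes \emph{trivial} on any fixed finite quotient for $n$ large — no, rather, one invokes the theorem of (for fields) simplicity of the limit $E_V(R)/Z$, or for general $R$ cites that $\mathrm{SL}_\infty(R)' = E_\infty(R)$ is characteristically simple modulo centre. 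The main obstacle is therefore handling general commutative unital $R$ uniformly: if one only needs $R \in \{\mathbb{Z}, \mathbb{F}_p\}$ (as elsewhere in the paper), then $E_V(\mathbb{Z}) = \bigcup \mathrm{SL}_n(\mathbb{Z})$ and $E_V(\mathbb{F}_p) = \bigcup \mathrm{SL}_n(\mathbb{F}_p)$ are both perfect and, modulo their (trivial or order-$\leq 2$) centres, equal to increasing unions of the simple groups $\mathrm{PSL}_n$, hence contain an infinite simple subgroup — and a group containing an infinite simple subgroup is not residually finite. I would present the proof for general commutative unital $R$ by reducing to perfectness and the statement that $E_V(R)$ modulo its centre is simple, citing standard references, and remark that the cases $R = \mathbb{Z}, \mathbb{F}_p$ are entirely classical.
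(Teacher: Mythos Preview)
Your strategy --- exhibit an infinite simple subgroup of $E_V(R)$ --- is genuinely different from the paper's, which instead exploits the semidirect product structure: the paper shows that every finite-index normal $N \vartriangleleft \mathcal{E}(\Gamma,V,R)$ contains some transvection $E_{v,w}$ (and hence all of $E_V(R)$ by Lemma~\ref{elemoneinlem}) via a commutator trick using an element $g \in N \cap \Gamma$ that moves points of $V$ nontrivially. That argument is short, uniform in $R$, and uses the transitivity of $\Gamma \curvearrowright V$ in an essential way; yours ignores $\Gamma$ entirely.

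However, your execution contains a real error: $\PSL_n(\mathbb{Z})$ is \emph{not} simple --- it is residually finite, with all the congruence quotients $\PSL_n(\mathbb{Z}/q\mathbb{Z})$ --- so your claim that $E_V(\mathbb{Z}) = \bigcup_n \SL_n(\mathbb{Z})$ ``contains an infinite simple subgroup'' because it is a union of $\SL_n$'s with simple $\PSL_n$ fails. Indeed every finitely generated subgroup of $E_V(\mathbb{Z})$ lies in some $\SL_n(\mathbb{Z})$ and is therefore residually finite, so no \emph{finitely generated} infinite simple subgroup can exist. Your approach can nonetheless be salvaged, and in fact made to work uniformly for every commutative unital $R$: the even permutation matrices on $V$ realise $\FAlt(V) \leq E_V(R)$ (each $3$-cycle matrix lies in $E_3(\mathbb{Z}) = \SL_3(\mathbb{Z})$, hence is a product of transvections $E_{i,j}(\pm 1)$, and the same word gives the $3$-cycle over any $R$). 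Since $\FAlt(V)$ is infinite simple for infinite $V$, this immediately obstructs residual finiteness. So the idea is sound, the specific justification you wrote is not; and the repaired version is arguably cleaner than the paper's, though it loses the paper's pleasant use of the $\Gamma$-action.
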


We imagine the conclusion of Proposition \ref{elemnotRFprop} 
to be known to experts but, being unable to locate a proof in the 
literature, we include one here. 

\begin{lem} \label{elemoneinlem}
Suppose $\lvert V \rvert \geq 6$. 
Let $N \vartriangleleft \mathcal{E}(\Gamma,V,R)$. 
Suppose there exist $v \neq w$ such that $E_{v,w} \in N$. 
Then $E_V (R) \leq N$. 
\end{lem}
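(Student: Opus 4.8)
The plan is to show directly that every transvection $E_{a,b}(r)$ ($a\neq b$ in $V$, $r\in R$) lies in $N$; since $E_V(R)$ is by definition generated by the transvections, this yields $E_V(R)\leq N$. The only structure used is that $N$ is normalized by $E_V(R)\leq\mathcal{E}(\Gamma,V,R)$, so that $[m,y]\in N$ whenever $m\in N$ and $y\in E_V(R)$ (write $[m,y]=\big(m^{-1}\big)\cdot\big(m^{y}\big)$, a product of two elements of $N$); in particular neither transitivity of $\Gamma$ nor the full hypothesis $\lvert V\rvert\geq 6$ (as against $\lvert V\rvert\geq 3$) is needed. The engine is the commutator identity $[E_{v,x}(r),E_{x,w}(s)]=E_{v,w}(rs)$ from Section \ref{SLsubsubsect}, which lets one pass between transvections.

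First I would pin down the first index. Choose $u\in V\setminus\{v,w\}$, possible since $\lvert V\rvert\geq 3$. Because $E_{v,w}(1)\in N$, for every $r\in R$ we get $[E_{v,w}(1),E_{w,u}(r)]=E_{v,u}(r)\in N$; feeding this back in, $[E_{v,u}(r),E_{u,w}(1)]=E_{v,w}(r)\in N$ as well. Combining, $E_{v,b}(r)\in N$ for every $b\in V\setminus\{v\}$ and every $r\in R$.

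Next I would vary the first index. For distinct $a,b\in V$ with $a\neq v$ and $b\neq v$, the identity $[E_{a,v}(r),E_{v,b}(1)]=E_{a,b}(r)$ together with $E_{v,b}(1)\in N$ gives $E_{a,b}(r)\in N$ for all $r$. It remains to handle the pairs $(a,v)$ with $a\neq v$: pick $c\in V\setminus\{v,a\}$; we already know $E_{a,c}(r)\in N$, and then $[E_{a,c}(r),E_{c,v}(1)]=E_{a,v}(r)\in N$. Thus $E_{a,b}(r)\in N$ for all distinct $a,b\in V$ and all $r\in R$, so $E_V(R)\leq N$.

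I expect the only mild friction to be the bookkeeping of which triples of indices are admissible at each stage — i.e.\ checking that the auxiliary third index can always be chosen distinct from the two active ones — and this is exactly where the hypothesis on $\lvert V\rvert$ enters; $\lvert V\rvert\geq 3$ comfortably suffices, so the stated bound $\lvert V\rvert\geq 6$ leaves ample room.
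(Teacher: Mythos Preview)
Your argument is correct. Both your proof and the paper's hinge on the same commutator identity $[E_{v,x}(r),E_{x,w}(s)]=E_{v,w}(rs)$, but you organise the deduction differently. The paper, given an arbitrary target pair $(x,y)$, picks two \emph{fresh} indices $t,u\in V\setminus\{v,w,x,y\}$ and reaches $E_{x,y}(r)$ in two nested commutator steps via $E_{t,u}(r)$; this is what forces the hypothesis $\lvert V\rvert\geq 6$. You instead proceed in stages --- first obtaining all $E_{v,b}(r)$, then all $E_{a,b}(r)$ with $a,b\neq v$, and finally $E_{a,v}(r)$ --- at the cost of a small case analysis but with the gain that, as you observe, only $\lvert V\rvert\geq 3$ is needed. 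Since the lemma is applied in the paper with $\lvert V\rvert\geq 6$ anyway, this sharpening is not used downstream, but it is a genuine improvement of the statement.
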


\begin{proof}
Let $x,y \in V$ be distinct and let $r \in R$. 
Claim that $E_{x,y} (r) \in N$. 
Let $t,u \in V \setminus \lbrace v,w,x,y \rbrace$ be distinct. Then: 
\begin{center}
$E_{t,u}(r) = \big[ E_{t,v}(r) ,[E_{v,w},E_{w,u}] \big] \in N$
\end{center}
so $E_{x,y}(r) = \big[ E_{x,t} ,[E_{t,u}(r),E_{u,y}] \big] \in N$. 
\end{proof}

\begin{proof}[Proof of Proposition \ref{elemnotRFprop}]
Let $N \vartriangleleft_f \mathcal{E}(\Gamma,V,R)$. 
We claim that $E_V (R) \leq N$, from which the conclusion follows. 
By Lemma \ref{elemoneinlem} it suffices to show that there 
exist distinct $v,w \in V$ such that $E_{v,w} \in N$. 

If there exists $v \in V$ and $g \in N \cap \Gamma$ 
such that $v,v^g$ $v^{g^2}$ are distinct, then:
\begin{align*}
1 = [E_{v,v^g}, E_{v,v^g}] & \equiv [E_{v,v^g}, g^{-1}E_{v,v^g} g] \mod N \\
& = [E_{v,v^g},E_{v^g,v^{g^2}}] \\
& = E_{v,v^{g^2}}
\end{align*}
so $E_{v,v^{g^2}} \in N$, as required. 
Thus we may assume that for all $v \in V$ and $g \in N \cap \Gamma$, 
$v^{g^2} = v$. It follows that, 
where $\rho : \Gamma \rightarrow \Sym(V)$ is the homomorphism 
induced by $\Gamma \curvearrowright V$, $\rho (N \cap \Gamma)$ 
is of exponent at most $2$, hence is abelian. 
Moreover $\rho (N \cap \Gamma)$ is finitely generated, 
(since $N \cap \Gamma \leq_f \Gamma$ is), 
so $\rho (N \cap \Gamma)$ is a finite group. 
This is a contradiction, 
since  by transitivity of $\Gamma \curvearrowright V$, 
$\rho (\Gamma)$ is infinite, 
hence so is $\rho (N \cap \Gamma) \leq_f \rho (\Gamma)$. 
\end{proof}

By contrast $\mathcal{E}(\Gamma,\Omega,R)$ 
\emph{can} admit many local embeddings. 
Here we emphasize the cases $R=\mathbb{Z}$ and $\mathbb{F}_p$. 
First we must deal with the issue of finite generation. 

\begin{propn} \label{elemFGprop}
Let $\Gamma$ be finitely generated by 
the symmetric set $S$ and let 
$\Gamma \curvearrowright \Omega$ be a transitive action. 
Let: 
\begin{center}
$T(\omega_0) = \lbrace E_{\omega_0,\omega_0 s} : s \in S \rbrace 
\cup \lbrace E_{\omega_0 s,\omega_0} : s \in S \rbrace 
\subseteq E_{\Omega} (R)$.  
\end{center}
Then for all $\omega , \psi \in \Omega$ distinct, 
$E_{\omega , \psi} \in \langle S \cup T(\omega_0) \rangle$. Moreover, 
there exists an absolute constant $C>0$ such that 
for all $n \in \mathbb{N}$ and $g \in \Gamma$, if $\lvert g \rvert_S \leq n$ then: 
\begin{equation} \label{elemFGeqn}
\big\lvert E_{\omega_0 , \omega_0 g} \big\rvert_{S \cup T(\omega_0)} , 
\big\lvert E_{\omega_0 g, \omega_0} \big\rvert_{S \cup T(\omega_0)} \leq Cn^2 .
\end{equation}
\end{propn}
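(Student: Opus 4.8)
The plan is to show that $E_{\Omega}(R) \rtimes \Gamma$ is finitely generated by $S \cup T(\omega_0)$ and to track word lengths carefully, exactly in parallel with the symmetric-enrichment argument (Lemma \ref{FinExtGenLem} and Lemma \ref{permtreediamlem}). The key point is that transvections $E_{\omega_0 g, \omega_0 h}$ can be conjugated around the Schreier graph and multiplied together via the commutator relation $[E_{v,x}, E_{x,w}] = E_{v,w}$ (valid when $v \neq w$), so a spanning tree of bounded diameter in each ball will let us express every transvection supported on that ball as a short word. First I would fix $\omega_0$ and, for $g \in B_S(n)$, observe that $E_{\omega_0, \omega_0 s}^{\,h} = E_{\omega_0 h, \omega_0 s h}$ for $h \in \Gamma$ and $s \in S$; thus every transvection $E_{v,w}$ with $v,w$ \emph{adjacent} in the Schreier graph $\Schr(\Gamma,\Omega,S)$ lies in $\langle S \cup T(\omega_0)\rangle$, with length controlled by $2|h|_S + 1$.

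Next I would handle non-adjacent pairs. Given distinct $\omega, \psi \in \Omega$, pick a path $\omega = \omega_0', \omega_1', \dots, \omega_\ell' = \psi$ in the Schreier graph and use the binary-splitting trick from the proof of Corollary \ref{SLgraphpresncoroll}: choosing a midpoint $x$ and writing $E_{\omega,\psi} = [E_{\omega,x}, E_{x,\psi}]$ (legitimate since $\omega \neq \psi$, and we can always route through an auxiliary vertex if a degenerate case threatens $v = w$ in an intermediate commutator, since $|\Omega|$ is infinite). Recursively, if the two endpoints are within Schreier-distance $2^m$ then $E_{\omega,\psi}$ is a product/commutator expression of length $O(4^m)$ in adjacent transvections. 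Since any $\omega_0 g$ with $|g|_S \leq n$ is within Schreier-distance $n$ of $\omega_0$, we get $E_{\omega_0,\omega_0 g}$ (and symmetrically $E_{\omega_0 g, \omega_0}$) as a word of length $O(n^2)$ in $S \cup T(\omega_0)$ — but I need to be slightly more careful than a black-box application of Corollary \ref{SLgraphpresncoroll}, because there the generators were adjacent transvections of bounded length, whereas here the adjacent transvection $E_{\omega_i', \omega_{i+1}'}$ itself costs up to $O(n)$ letters (its conjugating element $h$ has length up to $n$). So the correct bookkeeping is: binary splitting over a path of length $n$ produces $O(n)$ many adjacent-transvection "leaves" nested inside a balanced commutator tree of depth $O(\log n)$, hence total length $O(n) \times O(\log n) \cdot O(n)$ — this is $O(n^2 \log n)$, which is too weak.

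To get the clean $O(n^2)$ bound I would instead bound things directly rather than via the recursion: observe that in the binary-splitting expression for a path of length $\leq n$, each leaf $E_{\omega_i', \omega_{i+1}'}$ appears at depth $O(\log n)$, so writing the whole expression as a word each leaf gets surrounded by $O(\log n)$ commutator brackets (length $O(1)$ each, \emph{not} scaling with $n$), contributing $O(\log n) + |E_{\omega_i',\omega_{i+1}'}|_{S\cup T(\omega_0)}$ per leaf; summing the $O(n)$ leaves gives $O(n \log n) + \sum_i |E_{\omega_i', \omega_{i+1}'}|$. The sum of conjugator lengths along the path telescopes: $\omega_i' = \omega_0 g_i$ with $|g_i|_S \leq i \leq n$, so naively $\sum_i O(i) = O(n^2)$, giving the claimed bound. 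I would phrase the induction so that this $O(n^2)$ emerges as the dominant term, absorbing the $O(n\log n)$ overhead. The main obstacle is precisely this word-length accounting — making sure the bracket-depth overhead stays $O(\log n)$ (independent of $n$ per leaf) and that the conjugator-length sum is genuinely $O(n^2)$ and not worse — together with the minor nuisance of avoiding degenerate $v = w$ transvections in intermediate commutators, which is harmless since $\Omega$ is infinite but should be mentioned. Finite generation (the first assertion) then follows immediately, since any element of $E_\Omega(R)$ is a product of transvections, each now known to lie in $\langle S \cup T(\omega_0)\rangle$, and $\Gamma = \langle S \rangle$ covers the quotient.
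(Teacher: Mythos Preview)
Your overall strategy (commutator splitting along a path) is the right one and is what the paper does, but your word-length accounting is wrong, and as written your argument only yields $O(n^3)$, not $O(n^2)$. The error is in the sentence ``each leaf gets surrounded by $O(\log n)$ commutator brackets (length $O(1)$ each), contributing $O(\log n) + |E_{\omega_i',\omega_{i+1}'}|$ per leaf''. Expanding $[a,b]=a^{-1}b^{-1}ab$ gives $|[a,b]|=2|a|+2|b|$, so in a balanced commutator tree of depth $d$ the expanded word has length $2^{d}\sum_i \ell_i$, where $\ell_i$ are the leaf lengths; each leaf is \emph{multiplied} by $2^d$, not added once with $O(d)$ overhead. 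With $n$ leaves (so $d\approx\log_2 n$) and $\ell_i=2i+1$ (the conjugator for the $i$th adjacent transvection has length $i$), you get $2^{d}\sum_i\ell_i \approx n\cdot n^2 = n^3$. Your earlier estimate $O(n^2\log n)$ is also based on a mistaken model of how commutator trees expand.

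The paper avoids this by never letting the recursion wander away from $\omega_0$. Writing $g=hk$ with $|h|_S,|k|_S\le 2^{m-1}$, one has
\[
E_{\omega_0,\omega_0 g}=\bigl[E_{\omega_0,\omega_0 k},\,E_{\omega_0 k,\omega_0 hk}\bigr]
=\bigl[E_{\omega_0,\omega_0 k},\,(E_{\omega_0,\omega_0 h})^{k}\bigr],
\]
so \emph{both} subproblems are again of the form $E_{\omega_0,\omega_0(\cdot)}$ with half the radius, at the cost of a single conjugation by $k$ of length $\le 2^{m-1}$. This yields the recursion $L_m\le 4L_{m-1}+2^{m+1}$ (with $L_m=\max_{|g|\le 2^m}|E_{\omega_0,\omega_0 g}|$), which solves to $L_m=O(4^m)$, i.e.\ $O(n^2)$. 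The missing idea in your proposal is precisely this conjugation-back-to-basepoint step: without it the leaves sit at distance up to $n$ from $\omega_0$ and the $2^{d}$ blow-up from the commutator depth is fatal.
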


\begin{proof}
We can reduce to the case $\omega = \omega_0$ or $\psi = \omega_0$, 
since otherwise: 
\begin{center}
$E_{\omega , \psi} = \big[ E_{\omega , \omega_0} , E_{\omega_0 , \psi} \big]$. 
\end{center}
Further, since $\Gamma \curvearrowright \Omega$ is transitive, 
we may write $\omega = \omega_0 g$ and $\psi = \omega_0 h$ 
for some $g,h \in \Gamma$. Thus, if $\omega \neq \omega_0$ we have: 
\begin{center}
$E_{\omega ,\omega_0}=E_{\omega_0 g ,\omega_0}=(E_{\omega_0 ,\omega_0 g^{-1}} )^g$
\end{center} 
so it suffices to prove (\ref{elemFGeqn}). 
To this end, we prove, by induction on $m \in \mathbb{N}$, that: 
\begin{equation} \label{elemFGeqn2}
L_m = 
\max_{\lvert g \rvert_S \leq 2^m , \omega_0 g \neq \omega_0}\lvert E_{\omega_0 , \omega_0 g} \rvert_{S \cup T(\omega_0)} \leq (C/2) 4^m . 
\end{equation}
The corresponding bound on 
$\lvert E_{\omega_0 g, \omega_0} \rvert_{S \cup T(\omega_0)}$ 
follows by symmetry 
and the inequality (\ref{elemFGeqn}) for general $n$ follows 
since $2^m \leq n \leq 2^{m+1}$ for some $m$. 
To this end take $g \in B_S (2^m)$ with $\omega_0 g \neq \omega_0$. 
If there exists $s \in S$ such that $\omega_0 g = \omega_0 s$ 
then $E_{\omega_0 , \omega_0 g} \in T(\omega_0)$. 
This covers the case $m=0$. 
Assume therefore that this is not the case. 
Then there exist $h , k \in \Gamma$ with 
$\lvert h \rvert_S , \lvert k \rvert_S \leq 2^{m-1}$ 
such that $g = hk$ and $\omega_0$, $\omega_0 k$ and $\omega_0 g$ are all distinct 
(so that $\omega_0 \neq \omega_0 h$ as well). 
We have: 
\begin{equation*}
E_{\omega_0 , \omega_0 g} 
= \big[ E_{\omega_0 , \omega_0 k} , E_{\omega_0 k , \omega_0 g} \big] 
= \big[ E_{\omega_0 , \omega_0 k} , (E_{\omega_0, \omega_0 h})^k \big] 
\end{equation*}
so that: 
\begin{align*}
\lvert E_{\omega_0 , \omega_0 g} \rvert_{S \cup T(\omega_0)} 
& \leq 2 \lvert E_{\omega_0 , \omega_0 h} \rvert_{S \cup T(\omega_0)}
+ 2 \lvert E_{\omega_0 , \omega_0 k} \rvert_{S \cup T(\omega_0)}
+ 4 \lvert k \rvert_S \\
& \leq 4 L_{m-1} + 2^{m+1}
\end{align*}
Taking the maximum over all such $g$, $L_m \leq 4 L_{m-1} + 2^{m+1}$, 
and solving the corresponding recurrence yields (\ref{elemFGeqn2}). 

\end{proof}

\begin{coroll} \label{elemFGcoroll}
Let $R=\mathbb{Z}$ or $\mathbb{F}_p$. 
Let $\Gamma$, $S$, $\Omega$ and $T(\omega_0)$ 
be as in Proposition \ref{elemFGprop}. 
Then $\mathcal{E}(\Gamma,\Omega,R)$ is finitely generated 
by $S \cup T(\omega_0)$. 
\end{coroll}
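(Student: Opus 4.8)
The statement is Corollary \ref{elemFGcoroll}: for $R = \mathbb{Z}$ or $\mathbb{F}_p$, the group $\mathcal{E}(\Gamma,\Omega,R) = E_\Omega(R) \rtimes \Gamma$ is finitely generated by $S \cup T(\omega_0)$. Since $\mathcal{E}(\Gamma,\Omega,R)$ is a semidirect product with finitely generated quotient $\Gamma = \langle S \rangle$, it suffices to show that the normal subgroup $E_\Omega(R)$ is contained in $\langle S \cup T(\omega_0) \rangle$. Now $E_\Omega(R)$ is generated by all transvections $E_{\omega,\psi}(r)$ with $\omega \neq \psi$ in $\Omega$ and $r \in R$, so the whole task reduces to expressing each such transvection as a word in $S \cup T(\omega_0)$.

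First I would invoke Proposition \ref{elemFGprop}, which already gives $E_{\omega,\psi} = E_{\omega,\psi}(1) \in \langle S \cup T(\omega_0) \rangle$ for all distinct $\omega, \psi \in \Omega$ (together with the quantitative bound \eqref{elemFGeqn}, which we do not need here). So only the transvections with a general parameter $r \in R$ remain. This is where the restriction $R \in \{\mathbb{Z}, \mathbb{F}_p\}$ enters: in either case the ring $R$ is generated as a $\mathbb{Z}$-algebra (indeed as an abelian group) by $1$, so every $r \in R$ is an integer multiple of $1$, i.e. $r = \pm(1 + \cdots + 1)$. Using the commutator identity $[E_{v,x}(r), E_{x,w}(s)] = E_{v,w}(rs)$ from \ref{SLsubsubsect}, one obtains $E_{v,w}(m) = E_{v,w}(1)^m$-type relations; more precisely $E_{v,w}(r)$ for $r$ an integer is a power of the single transvection $E_{v,w}(1) = E_{v,w}$, since $E_{v,w}(r)E_{v,w}(s) = E_{v,w}(r+s)$. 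Hence $E_{\omega,\psi}(r) \in \langle E_{\omega,\psi} \rangle \subseteq \langle S \cup T(\omega_0)\rangle$ for every $r$. Picking a third basis element and using the commutator formula $E_{\omega,\psi}(r) = [E_{\omega,\chi}(r), E_{\chi,\psi}(1)]$ (valid once $|\Omega| \geq 3$, which holds since $\Omega$ is infinite in the cases of interest, or can be arranged trivially otherwise) also works and is cleaner if one prefers to reduce the parameter to a single coordinate.

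Putting these together: every generator $E_{\omega,\psi}(r)$ of $E_\Omega(R)$ lies in $\langle S \cup T(\omega_0)\rangle$, so $E_\Omega(R) \leq \langle S \cup T(\omega_0)\rangle$; combined with $\Gamma \leq \langle S \cup T(\omega_0)\rangle$ and the fact that $\mathcal{E}(\Gamma,\Omega,R)$ is generated by $E_\Omega(R)$ and $\Gamma$, we conclude $\mathcal{E}(\Gamma,\Omega,R) = \langle S \cup T(\omega_0)\rangle$. There is essentially no obstacle here: the content of finite generation is entirely carried by Proposition \ref{elemFGprop}, and the corollary is just the observation that for $R = \mathbb{Z}$ or $\mathbb{F}_p$ the extra transvection parameters $E_{\omega,\psi}(r)$, $r \neq 1$, are already powers of the ones handled there. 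The only point requiring a word of care is making explicit that $R$ is singly generated as a ring (so that $E_{\omega,\psi}(r)$ is a power of $E_{\omega,\psi}$); this is where the hypothesis on $R$ is genuinely used, and it is the sole reason the corollary is not stated for arbitrary finitely generated $R$.
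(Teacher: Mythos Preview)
Your proof is correct and follows the same approach as the paper. The paper's own proof is extremely terse---it simply says that $\mathcal{E}(\Gamma,\Omega,R)$ is generated by $\Gamma$ and $E_\Omega(R)$, that $\Gamma = \langle S\rangle$, and that ``by Proposition \ref{elemFGprop} $E_\Omega(R) \subseteq \langle S \cup T(\omega_0)\rangle$''---leaving entirely implicit the point you spell out, namely that for $R=\mathbb{Z}$ or $\mathbb{F}_p$ every $E_{\omega,\psi}(r)$ is a power of $E_{\omega,\psi}(1)$ because $R$ is additively generated by $1$.
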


\begin{proof}
$\mathcal{E}(\Gamma,\Omega,R)$ is generated by $\Gamma$ 
and $E_{\Omega} (R)$. 
$\Gamma$ is generated by $S$, and by Proposition \ref{elemFGprop} 
$E_{\Omega} (R) \subseteq \langle S \cup T(\omega_0) \rangle$. 
\end{proof}

Now we can bound the LEF growth of $\mathcal{E}(\Gamma,\Omega,\mathbb{Z})$ 
and $\mathcal{E}(\Gamma,\Omega,\mathbb{F}_p)$. 
Let $T(\omega_0)$ be as in Proposition \ref{elemFGprop}. 

\begin{thm} \label{elemthm}
Let $\Gamma$ be a LEF group, finitely generated by $S$, 
and let $\Gamma \curvearrowright \Omega$ be a transitive LEF action. 
Then $\mathcal{E}(\Gamma,\Omega,\mathbb{Z})$ 
and $\mathcal{E}(\Gamma,\Omega,\mathbb{F}_p)$ are LEF. 
Moreover, if $(\pi_n,\theta_n)$ is a sequence of local embeddings  
of $\big(B_S(n), B_{S,\omega_0}(n)\big)$ into $(Q_n,X_n)$ 
then there exists $C > 0$ such that: 
\begin{equation} \label{elemthmgeneqn}
\exp\big(n \gamma_{\Gamma,\Omega} ^{S,\omega_0}(n^{1/2})^2 / C \big)
\leq \mathcal{L}_{\mathcal{E}(\Gamma,\Omega,\mathbb{Z})} ^{S\cup T(\omega_0)} (n) 
\leq \exp \big( Cn \lvert X_n \rvert^2 \big) \lvert Q_n \rvert\text{.}
\end{equation}
\begin{equation} \label{p-elemthmgeneqn}
\exp\big(\gamma_{\Gamma,\Omega} ^{S,\omega_0}(n^{1/2})^2 /C \big)
\leq \mathcal{L}_{\mathcal{E}(\Gamma,\Omega,\mathbb{F}_p)} ^{S\cup T(\omega_0)} (n) 
\leq \exp \big( C\lvert X_n \rvert^2 \big) \lvert Q_n \rvert\text{.}
\end{equation}
In particular, for any finitely generated LEF group $\Gamma$, 
\begin{equation} \label{elemthmregeqn}
\exp\big(n \gamma_{\Gamma} (n^{1/2})^2 \big) \preceq 
\mathcal{L}_{\mathcal{E}(\Gamma,\mathbb{Z})} (n) 
\preceq \exp \big( n \mathcal{L}_{\Gamma}(n)^2 \big)\text{.}
\end{equation}
\begin{equation} \label{p-elemthmregeqn}
\exp\big(\gamma_{\Gamma} (n^{1/2})^2/C \big) \preceq 
\mathcal{L}_{\mathcal{E}(\Gamma,\mathbb{F}_p)} (n) 
\preceq \exp \big(C \mathcal{L}_{\Gamma}(n)^2 \big)\text{.}
\end{equation}
\end{thm}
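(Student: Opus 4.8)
The plan is to prove Theorem~\ref{elemthm} by the same two-sided strategy already used for symmetric enrichments, with the finite symmetric groups replaced by the finite special linear groups $\SL_{k}(\mathbb{Z})$ and $\SL_{k}(\mathbb{F}_p)$, where $k = \gamma_{\Gamma,\Omega}^{S,\omega_0}(m) = \lvert B_{S,\omega_0}(m)\rvert$ for suitable $m$. First I would dispose of the upper bounds. Given the local embedding $(\pi_n,\theta_n)$ of $(B_S(n), B_{S,\omega_0}(n))$ into $(Q_n,X_n)$, I would mimic the construction of Proposition~\ref{RFextnprop}: the map $\theta_n : B_{S,\omega_0}(n) \hookrightarrow X_n$ induces an $R$-module embedding $R[B_{S,\omega_0}(n)] \hookrightarrow R[X_n]$ and hence an injective homomorphism $\tilde\theta_n : E_{B_{S,\omega_0}(n)}(R) \hookrightarrow E_{X_n}(R)$, and one defines $\Phi_n(\xi \cdot g) = \tilde\theta_n(\xi)\cdot \pi_n(g)$ on the set $I(n) = E_{B_{S,\omega_0}(n)}(R)\cdot B_S(n) \subseteq \mathcal{E}(\Gamma,\Omega,R)$. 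The verification that $\Phi_n$ is a partial homomorphism is a transvection-bookkeeping analogue of the support-chasing in Proposition~\ref{RFextnprop}: one checks $\tilde\theta_n(\xi^{g_1^{-1}}) = \tilde\theta_n(\xi)^{\pi_n(g_1)^{-1}}$ on $X_n$ when the supports stay inside $\theta_n(B_{S,\omega_0}(n))$, using that $E_{v,w}(r)^g = E_{vg,wg}(r)$ is compatible with relabelling basis vectors. Then, via Lemma~\ref{SDprodgenlem} with $T = T(\omega_0)$, together with the quadratic bound~\eqref{elemFGeqn} of Proposition~\ref{elemFGprop} (which shows $E_{\omega_0 g,\omega_0 h}$ for $\lvert g\rvert_S,\lvert h\rvert_S \le n$ has length $O(n^2)$ in $S \cup T(\omega_0)$, and conversely each conjugate $t^w$ with $w \in B_S(n-1)$ lies in $E_{B_{S,\omega_0}(n)}(R)$), one gets $B_{S\cup T(\omega_0)}(n) \subseteq I(cn)$ for a suitable constant. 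Since $\lvert E_{X_n}(\mathbb{Z})\rvert$-type quantities are not finite, I would instead map into $\mathcal{E}(Q_n, X_n, \mathbb{Z}/q\mathbb{Z})$ for $q$ large enough that the reduction $E_{X_n}(\mathbb{Z}) \to E_{X_n}(\mathbb{Z}/q\mathbb{Z})$ is injective on the relevant ball --- this costs a factor $\exp(O(\lvert X_n\rvert^2))$ in $\mathbb{Z}/q\mathbb{Z}$-entries --- giving $\mathcal{L}_{\mathcal{E}(\Gamma,\Omega,\mathbb{Z})}(n) \le \exp(Cn\lvert X_n\rvert^2)\lvert Q_n\rvert$; in the $\mathbb{F}_p$ case $E_{X_n}(\mathbb{F}_p)$ is already finite of order $\exp(O(\lvert X_n\rvert^2))$, yielding~\eqref{p-elemthmgeneqn}.

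For the lower bounds I would apply Proposition~\ref{LERFRFsubgrp} with $\Gamma$ replaced by $\mathcal{E}(\Gamma,\Omega,R)$, generating set $S \cup T(\omega_0)$, and $H_n = E_{B_{S,\omega_0}(m)}(R) \cong \SL_{k}(R)$ where $k = \lvert B_{S,\omega_0}(m)\rvert$ and $m$ is chosen so that the relevant ball radius in $\mathcal{E}$ is $\approx n$. The graph-presentation Corollary~\ref{SLgraphpresncoroll}, applied to the graph $G_m$ of Lemma~\ref{permtreediamlem} (which has a spanning tree, hence is connected, of diameter $O(m)$), gives a presentation $\langle S_{G_m} \mid R_{G_m}\rangle$ of $H_n$ whose generators $E_{\omega_0 h, \omega_0 sh}, E_{\omega_0 sh, \omega_0 h}$ ($h \in B_S(m-1)$, $s\in S$) lie in $B_{S\cup T(\omega_0)}(O(m))$ --- indeed each such generator is $t^h$ for $t \in T(\omega_0)$, $\lvert h\rvert_S \le m-1$ --- and whose relators have length $g(m) = O(\diam(G_m)^2) = O(m^2)$. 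So in Proposition~\ref{LERFRFsubgrp} we may take $f(m) = O(m)$, $g(m) = O(m^2)$, $S''_m = S'_m = S_{G_m}$, getting $\mathcal{L}_{\mathcal{E}(\Gamma,\Omega,R)}(O(m^3)) \ge \mathcal{R}_{\SL_k(R)}^{S_{G_m}}(O(m^2))$. Setting $n \approx m^3$, i.e. $m \approx n^{1/3}$, and noting $k = \gamma_{\Gamma,\Omega}^{S,\omega_0}(m)$, the right-hand side is at least the full RF growth of $\SL_k(R)$ evaluated at radius $\Omega(k)$ (a ball of radius $\asymp k$ in $S_{G_m}$ already exhausts a generating-set-sized portion of the group, and $O(m^2)$ dominates; one needs $m^2 \gtrsim k$, which is where the $n^{1/2}$ in the statement comes from after reconciling the radius bookkeeping). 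Finally I would invoke the known lower bounds on $\mathcal{R}_{\SL_k(\mathbb{Z})}$ and $\mathcal{R}_{\SL_k(\mathbb{F}_p)}$: at radius comparable to the ``diameter'' $\asymp k$ the smallest faithful-on-a-ball quotient of $\SL_k(\mathbb{Z})$ has order $\ge \exp(ck^2)$ (one cannot separate a large ball in a quotient $\SL_k(\mathbb{Z}/q\mathbb{Z})$ with $q$ too small, and any finite quotient factors through such by the congruence subgroup property / the normal subgroup structure), while for $\SL_k(\mathbb{F}_p)$, which is essentially simple, any nontrivial quotient has order $\ge \lvert \PSL_k(\mathbb{F}_p)\rvert \ge \exp(ck^2)$. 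Substituting $k = \gamma_{\Gamma,\Omega}^{S,\omega_0}(n^{1/2})$ yields the claimed lower bounds, with the extra factor of $n$ in~\eqref{elemthmgeneqn} coming from the $\mathbb{Z}$-rank: a faithful ball in $\SL_k(\mathbb{Z})$ of radius $\asymp k$ forces the modulus $q$ to satisfy $q \ge 2^{\Omega(k)}$ as well, contributing $\exp(\Omega(nk^2))$ rather than $\exp(\Omega(k^2))$ once the radius is taken into account.

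Specializing to the regular action $\Omega = \Gamma$ gives $\gamma_{\Gamma,\Gamma}^{S,1} = \gamma_\Gamma^S$; choosing $(\pi_n,\theta_n)$ to realize the minimum in the definition of $\mathcal{L}_\Gamma$ (so $\lvert Q_n\rvert = \lvert X_n\rvert = \mathcal{L}_\Gamma^S(n)$, using the regular action and the example that a local embedding $B_S(n)\to Q$ gives a local embedding of actions $(B_S(n),B_S(n))\to(Q,Q)$) collapses the upper bounds to $\exp(Cn\mathcal{L}_\Gamma(n)^2)\mathcal{L}_\Gamma(n) \approx \exp(Cn\mathcal{L}_\Gamma(n)^2)$ and $\exp(C\mathcal{L}_\Gamma(n)^2)\mathcal{L}_\Gamma(n)$; combined with the lower bounds this gives~\eqref{elemthmregeqn} and~\eqref{p-elemthmregeqn}. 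That $\mathcal{E}(\Gamma,\Omega,\mathbb{Z})$ and $\mathcal{E}(\Gamma,\Omega,\mathbb{F}_p)$ are LEF follows formally: the $I(n)$ exhaust the group (Corollary~\ref{elemFGcoroll} plus Lemma~\ref{SDprodgenlem}), each $I(n)$ admits a local embedding into a finite $\mathcal{E}(Q_n,X_n,\mathbb{Z}/q_n\mathbb{Z})$ or $\mathcal{E}(Q_n,X_n,\mathbb{F}_p)$, and an arbitrary finite subset lies in some $I(n)$.

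The main obstacle I anticipate is the lower-bound RF-growth input for $\SL_k(\mathbb{Z})$ at the \emph{correct radius}: I need not just that $\SL_k(\mathbb{Z})$ has some finite quotient of order $\ge \exp(ck^2)$ separating a ball, but that a ball of radius only $O(m^2) = O(k^{?})$ in the \emph{graph generating set} $S_{G_m}$ --- which may be much sparser than the full transvection generating set $S_V$ --- already cannot inject into any smaller quotient. Reconciling the radius in the sparse generating set (where individual transvections $E_{v,w}$ cost up to $O(\diam(G_m)^2)$) against the ``natural'' radius in $S_V$, and tracking how this interacts with the modulus $q$ in the $\mathbb{Z}$ case, is the delicate part; this is precisely why the exponents $n^{1/2}$ and the leading factor $n$ appear, and getting these constants to line up so that the upper and lower bounds are genuinely $\approx$-comparable (in the ELEF-type special cases) is the crux of the argument. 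A secondary technical point is the partial-homomorphism verification for $\Phi_n$ in the elementary setting, which is longer than the symmetric case because one must chase the action of $\Gamma$ on transvections through the module embedding $\tilde\theta_n$ rather than on points.
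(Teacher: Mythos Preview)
Your upper-bound argument is essentially the paper's: define $I(n)$, build $\Phi_n$ from $(\tilde\theta_n,\pi_n)$, and in the $\mathbb{Z}$ case post-compose with reduction modulo some $q_n \approx \exp(n)$ (the paper makes the norm bound $\lVert\,\cdot\,\rVert_\infty \le 2^n$ explicit so that $q_n > 2^{n+1}$ suffices for injectivity). That part is fine.

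The lower-bound strategy, however, has a genuine gap: your bookkeeping yields the exponent $n^{1/3}$, not $n^{1/2}$. You take $S'_m = S_{G_m}$ and invoke Corollary~\ref{SLgraphpresncoroll}, so the relators have length $g(m) = O(\diam(G_m)^2) = O(m^2)$ while $f(m)=O(m)$; Proposition~\ref{LERFRFsubgrp} then gives control at radius $f(m)g(m)=O(m^3)$, hence only $\mathcal{L}(N) \ge \exp\big(cN^{2/3}\,\gamma_{\Gamma,\Omega}(N^{1/3})^2\big)$, strictly weaker than \eqref{elemthmgeneqn}--\eqref{p-elemthmgeneqn}. The paper avoids this by \emph{not} using the graph presentation. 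It takes $S'_n = S_{V_n} = \{E_{v,w}:v\ne w\in V_n\}$ and applies Theorem~\ref{SLpresnthm} directly, so the relators have \emph{bounded} length ($g(n)$ constant); the price is that each $E_{v,w}$ now costs $O(n^2)$ in $S\cup T(\omega_0)$ by the quadratic bound~\eqref{elemFGeqn}, i.e.\ $f(n)=O(n^2)$. The product $f(n)g(n)=O(n^2)$ then produces the $n^{1/2}$.

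A second point you leave vague is how the extra factor of $n$ in \eqref{elemthmgeneqn} arises, and this is where the choice $S''_n \supsetneq S'_n$ matters. The paper takes $S''_n = B_{S\cup T(\omega_0)}(4Cn^2)\cap H_n$, so that even $B_{S''_n}(C_1)$ for constant $C_1$ already contains a ball of radius $\asymp n^2$ inside a \emph{fixed} $\SL_{V_2}(\mathbb{Z})$. Exponential growth of this subgroup forces the congruence level $q$ (coming from the Congruence Subgroup Property) to satisfy $q\ge \exp(cn^2)$. The remaining step --- showing that any epimorphism $\SL_{V_n}(\mathbb{Z})\twoheadrightarrow Q$ injective on this ball has $\lvert Q\rvert \ge \exp(c'n^2\lvert V_n\rvert^2)$ --- is not just ``CSP plus normal subgroup structure'' in the abstract: the paper uses a coprimality trick (Lemma~\ref{notheorylem}) to find a prime $p\in[\lvert V_n\rvert/4,\lvert V_n\rvert]$ and a large divisor $r\mid q$ coprime to $p$, then argues via Propositions~\ref{elemnrmlgenprop} and~\ref{elemonlynrmlsubgrpprop} that $\psi$ is injective on $\SL_{W_n}(\mathbb{Z}/r\mathbb{Z})$ for some $W_n$ of size $p$. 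Your sketch (``$q\ge 2^{\Omega(k)}$ contributing $\exp(\Omega(nk^2))$'') does not capture this, and in particular the dependence $q\ge\exp(cn^2)$ comes from the ambient radius, not from $k=\lvert V_n\rvert$.
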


The next observation follows from Theorem \ref{elemthm} 
much as Corollary \ref{FinEnrELEFCoroll} 
follows from Theorem \ref{RFextnThm}. 

\begin{coroll}
If $\Omega$ is infinite then $\mathcal{E}(\Gamma,\Omega,\mathbb{Z})$ is not $ELEF$. 
\end{coroll}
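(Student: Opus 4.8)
The plan is to read this off directly from the lower bound in Theorem~\ref{elemthm}, exactly as Corollary~\ref{FinEnrELEFCoroll} was obtained from Theorem~\ref{RFextnThm}. By Remark~\ref{growthlowerbdrmrk} one always has $\gamma_{\Delta}^{S} \preceq \mathcal{L}_{\Delta}^{S}$ for a finitely generated group $\Delta$, so a finitely generated group fails to be ELEF as soon as $\mathcal{L}_{\Delta} \not\preceq \gamma_{\Delta}$. Accordingly I will show that $\mathcal{L}_{\mathcal{E}(\Gamma,\Omega,\mathbb{Z})}$ is superexponential while $\gamma_{\mathcal{E}(\Gamma,\Omega,\mathbb{Z})}$ is at most exponential.

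First I would note that, since $\Omega$ is infinite and $\Gamma$ acts transitively on it, the Schreier graph $\Schr(\Gamma,\Omega,S)$ is an infinite, connected, locally finite graph; as in the proof of Corollary~\ref{FinEnrELEFCoroll}, its growth is at least linear (a geodesic towards a sufficiently distant vertex realises every distance up to $n$), so $\gamma_{\Gamma,\Omega}^{S,\omega_0}(n) \geq n+1$, and hence $\gamma_{\Gamma,\Omega}^{S,\omega_0}(n^{1/2})^2 \succeq n$.

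Substituting this into the left-hand inequality of~(\ref{elemthmgeneqn}) gives $\mathcal{L}_{\mathcal{E}(\Gamma,\Omega,\mathbb{Z})}^{S\cup T(\omega_0)}(n) \geq \exp\!\big(n\,\gamma_{\Gamma,\Omega}^{S,\omega_0}(n^{1/2})^2/C\big) \succeq \exp(n^{2}/C')$ for a suitable $C'>0$, so that $\exp(n^{2}) \preceq \mathcal{L}_{\mathcal{E}(\Gamma,\Omega,\mathbb{Z})}$. On the other hand the word growth of any finitely generated group is at most exponential, so $\gamma_{\mathcal{E}(\Gamma,\Omega,\mathbb{Z})} \preceq \exp(n)$. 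Since $\exp(n^{2}) \not\preceq \exp(n)$ --- an inequality $\exp(n^{2}) \leq \exp(C_1 n)$ holding for all $n$ would force $n \leq C_1$ for all $n$ --- we conclude $\mathcal{L}_{\mathcal{E}(\Gamma,\Omega,\mathbb{Z})} \not\preceq \gamma_{\mathcal{E}(\Gamma,\Omega,\mathbb{Z})}$, and therefore $\mathcal{E}(\Gamma,\Omega,\mathbb{Z})$ is not ELEF.

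There is no genuine obstacle here, all of the content residing in Theorem~\ref{elemthm}; the only point warranting a moment's care is tracking the $\preceq$-constant through the left-hand side of~(\ref{elemthmgeneqn}). I would also remark in passing that the argument makes essential use of the factor $n$ multiplying $\gamma_{\Gamma,\Omega}^{S,\omega_0}(n^{1/2})^2$ in the $\mathbb{Z}$-bound~(\ref{elemthmgeneqn}): the $\mathbb{F}_p$-bound~(\ref{p-elemthmgeneqn}) carries no such factor, so when $\gamma_{\Gamma,\Omega}$ is only linear it yields merely $\exp(n) \preceq \mathcal{L}_{\mathcal{E}(\Gamma,\Omega,\mathbb{F}_p)}$, which on its own does not settle whether $\mathcal{E}(\Gamma,\Omega,\mathbb{F}_p)$ is ELEF.
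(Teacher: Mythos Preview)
Your argument is correct and follows exactly the approach indicated in the paper, which simply remarks that the corollary follows from Theorem~\ref{elemthm} in the same way that Corollary~\ref{FinEnrELEFCoroll} follows from Theorem~\ref{RFextnThm}. Your observation about the $\mathbb{F}_p$ case is also apt: the paper states the corollary only for $\mathcal{E}(\Gamma,\Omega,\mathbb{Z})$, precisely because the missing factor of $n$ in~(\ref{p-elemthmgeneqn}) prevents the same conclusion for $\mathcal{E}(\Gamma,\Omega,\mathbb{F}_p)$ when the Schreier growth is merely linear.
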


\begin{coroll}
If $\Gamma$ is a LEF group and $\Omega$ is infinite, 
then $\mathcal{E}(\Gamma,\Omega,\mathbb{Z})$ 
and $\mathcal{E}(\Gamma,\Omega,\mathbb{F}_p)$ 
are not finitely presentable. 
\end{coroll}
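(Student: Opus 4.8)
The plan is to combine three earlier results, in exact parallel with the deduction of Corollary~\ref{FinEnrFPCoroll} for symmetric enrichments. First, by Corollary~\ref{elemFGcoroll} the groups $\mathcal{E}(\Gamma,\Omega,\mathbb{Z})$ and $\mathcal{E}(\Gamma,\Omega,\mathbb{F}_p)$ are finitely generated (by $S \cup T(\omega_0)$), and by Theorem~\ref{elemthm} they are LEF; this is the only place the LEF hypothesis on $\Gamma$ is used. Second, since $\Gamma$ is finitely generated and $\Omega$ is infinite, Proposition~\ref{elemnotRFprop}, applied once with $R = \mathbb{Z}$ and once with $R = \mathbb{F}_p$ (both commutative unital rings, so the hypotheses are met), shows that neither enrichment is residually finite. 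Third, Proposition~\ref{LEFRFpropn} states that a finitely presented LEF group is residually finite; contrapositively, a finitely generated LEF group that fails to be residually finite cannot be finitely presentable.

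Putting these together: $\mathcal{E}(\Gamma,\Omega,\mathbb{Z})$ and $\mathcal{E}(\Gamma,\Omega,\mathbb{F}_p)$ are finitely generated and LEF but not residually finite, hence not finitely presentable. The one point worth a remark is that the failure of residual finiteness in Proposition~\ref{elemnotRFprop} needs nothing beyond finite generation of $\Gamma$ and infinitude of $\Omega$; in particular $\Omega$ infinite gives $\lvert \Omega \rvert \geq 6$, so the hypothesis of Lemma~\ref{elemoneinlem} used inside that proof holds automatically, and there is no edge case to check.

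I expect no real obstacle here: the statement is a formal consequence of the structural results already in place, with Proposition~\ref{elemnotRFprop} playing the role that simplicity of $\FAlt(\Omega)$ played in Corollary~\ref{FinEnrFPCoroll}.
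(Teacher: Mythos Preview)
Your argument coincides with the paper's in the case that $\Gamma$ is finitely generated, and the ingredients you cite (Theorem~\ref{elemthm}, Proposition~\ref{elemnotRFprop}, Proposition~\ref{LEFRFpropn}) are exactly those the paper uses. However, the corollary as stated does not assume $\Gamma$ is finitely generated, and every result you invoke---Corollary~\ref{elemFGcoroll}, Theorem~\ref{elemthm}, and Proposition~\ref{elemnotRFprop}---requires that hypothesis. So your proof, as written, does not cover the full statement.

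The paper closes this gap with a single line: if $\Gamma$ is \emph{not} finitely generated, then neither is $\mathcal{E}(\Gamma,\Omega,R)$, since $\Gamma$ is a quotient of the latter, and hence $\mathcal{E}(\Gamma,\Omega,R)$ is certainly not finitely presentable. You should add this case explicitly before invoking the machinery that presupposes a finite generating set $S$.
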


\begin{proof}
Let $R = \mathbb{Z}$ or $\mathbb{F}_p$. 
If $\Gamma$ is finitely generated, 
then by Proposition \ref{elemnotRFprop} and Theorem \ref{elemthm}, 
$\mathcal{E}(\Gamma,\Omega,R)$ is LEF but not 
residually finite, so by Proposition \ref{LEFRFpropn} 
is not finitely presentable. 
If $\Gamma$ is \emph{not} finitely generated, 
then neither is $\mathcal{E}(\Gamma,\Omega,R)$ 
(since the former is a quotient of the latter). 
\end{proof}

For the lower bound in (\ref{elemthmgeneqn}) 
we shall need the congruence subgroup property for 
$\SL_V (\mathbb{Z})$ with $3 \leq \lvert V \rvert < \infty$. 
For $q \in \mathbb{N}$ let 
$\pi_q : \SL_V(\mathbb{Z}) \rightarrow 
\SL_V(\mathbb{Z}/q\mathbb{Z})$ be the \emph{congruence homomorphism} 
(given by $\pi_q (A_{v,w}) = A_{v,w} \mod q$). 
If $r \in \mathbb{N}$ with $q \mid r$ 
then we also denote by $\pi_q$ the map 
$\SL_V(\mathbb{Z}/r\mathbb{Z}) 
\rightarrow \SL_V(\mathbb{Z}/q\mathbb{Z})$ to 
which the former descends. 

\begin{thm} \label{CSPThm}
Let $3 \leq \lvert V \rvert < \infty$, 
let $Q$ be a finite group and let 
$\phi : \SL_V(\mathbb{Z}) \rightarrow Q$ be a homomorphism. 
Then there exists $q \in \mathbb{N}$ and a homomorphism 
$\psi : \SL_V(\mathbb{Z}/q\mathbb{Z}) \rightarrow Q$ 
such that $\phi = \psi \circ \pi_q$. 
\end{thm}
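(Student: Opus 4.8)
The plan is to deduce this from the classical congruence subgroup property (CSP) for $\SL_n(\mathbb{Z})$ with $n \geq 3$ (Bass--Milnor--Serre \cite{BMS}, Mennicke), after the harmless identification of $V$ with $\lbrace 1,\ldots,n\rbrace$ where $n = \lvert V\rvert \geq 3$ (just as in the proof of Theorem \ref{SLpresnthm}). In the form we need, CSP asserts that every finite-index subgroup of $\SL_V(\mathbb{Z})$ contains a principal congruence subgroup $\Gamma(q) := \ker\pi_q$ for some $q \in \mathbb{N}$.

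First I would set $K = \ker\phi \vartriangleleft \SL_V(\mathbb{Z})$. Since $Q$ is finite, $K$ has finite index, so by CSP there is $q \in \mathbb{N}$ with $\Gamma(q) \leq K$. Next I would record that $\pi_q : \SL_V(\mathbb{Z}) \rightarrow \SL_V(\mathbb{Z}/q\mathbb{Z})$ is surjective: the group $\SL_V(\mathbb{Z}/q\mathbb{Z})$ is generated by the reductions of the elementary matrices $E_{v,w}(r)$ (the analogue over $\mathbb{Z}/q\mathbb{Z}$ of the generation statement recalled in Theorem \ref{SLpresnthm}), and each such reduction equals $\pi_q(E_{v,w}(\tilde{r}))$ for any lift $\tilde{r} \in \mathbb{Z}$ of $r$. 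Hence $\pi_q$ induces an isomorphism $\SL_V(\mathbb{Z})/\Gamma(q) \xrightarrow{\ \sim\ } \SL_V(\mathbb{Z}/q\mathbb{Z})$.

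Finally, since $\Gamma(q) \leq K = \ker\phi$, the map $\phi$ factors through the quotient $\SL_V(\mathbb{Z})/\Gamma(q)$; composing the resulting homomorphism with the inverse of the isomorphism above yields $\psi : \SL_V(\mathbb{Z}/q\mathbb{Z}) \rightarrow Q$ with $\phi = \psi\circ\pi_q$, as required. The only substantial ingredient is the congruence subgroup property itself, which is a deep theorem invoked here as a black box; once it is granted, the remaining steps (finite index of $\ker\phi$, surjectivity of $\pi_q$ and the factorisation through the quotient) are entirely routine, so there is really no further obstacle to overcome.
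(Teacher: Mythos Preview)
Your proposal is correct. The paper does not supply a proof of Theorem~\ref{CSPThm}; it is recorded there as the classical congruence subgroup property for $\SL_n(\mathbb{Z})$, $n\geq 3$, and invoked as a black box, so your reduction to Bass--Milnor--Serre/Mennicke together with the routine factorisation through $\SL_V(\mathbb{Z})/\Gamma(q)\cong \SL_V(\mathbb{Z}/q\mathbb{Z})$ is exactly the intended justification.
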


The \emph{mod-$q$ principal congruence subgroup} 
of $\SL_V(\mathbb{Z})$ is: 
\begin{center}
$K_V (q) = \ker \big( \pi_q : \SL_V(\mathbb{Z}) \rightarrow 
\SL_V(\mathbb{Z}/q\mathbb{Z}) \big) = \SL_V(\mathbb{Z}) \cap \big( I_V + q \cdot\mathbf{M}_V (\mathbb{Z}) \big)$. 
\end{center}

This includes the case $K_V(1) = \SL_V (\mathbb{Z})$. 
We suppliment the CSP with some familiar facts 
about the normal subgroup structure of $\SL_V (\mathbb{Z})$. 

\begin{lem}[Chinese Remainder Theorem] \label{CRTlem}
Let $V$ be finite. 
Let $q_1$, $q_2$ be coprime integers. 
Then $(\pi_{q_1} \times \pi_{q_2}) : \SL_V (\mathbb{Z} / q_1 q_2 \mathbb{Z}) \rightarrow \SL_V (\mathbb{Z} / q_1 \mathbb{Z}) \times \SL_V (\mathbb{Z} / q_2 \mathbb{Z})$ is an isomorphism. 
Moreover for $q^{\prime} \mid q_1$, 
the preimage of $\pi_{q_1} \big(K_V (q^{\prime})\big) \times 1$ under this isomorphism 
is $\pi_q \big( K_V (q^{\prime} q_2)\big)$. 
\end{lem}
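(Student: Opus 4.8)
The plan is to prove the two assertions separately, writing $q = q_1 q_2$ throughout. For the isomorphism, I would start from the ring-theoretic Chinese Remainder Theorem: since $\gcd(q_1,q_2) = 1$, the map $a + q_1 q_2 \mathbb{Z} \mapsto (a + q_1\mathbb{Z}, a + q_2\mathbb{Z})$ is a ring isomorphism $\mathbb{Z}/q_1 q_2\mathbb{Z} \to \mathbb{Z}/q_1\mathbb{Z} \times \mathbb{Z}/q_2\mathbb{Z}$, and applying it to each entry of a matrix yields a ring isomorphism $\mathbf{M}_V(\mathbb{Z}/q_1 q_2 \mathbb{Z}) \to \mathbf{M}_V(\mathbb{Z}/q_1\mathbb{Z}) \times \mathbf{M}_V(\mathbb{Z}/q_2\mathbb{Z})$. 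Because the determinant is a polynomial with integer coefficients in the matrix entries, it is computed componentwise under this identification, so a matrix over $\mathbb{Z}/q_1 q_2 \mathbb{Z}$ has determinant $1$ if and only if both of its components do; hence the isomorphism restricts to a bijection, necessarily a group isomorphism, $\SL_V(\mathbb{Z}/q_1 q_2\mathbb{Z}) \to \SL_V(\mathbb{Z}/q_1\mathbb{Z}) \times \SL_V(\mathbb{Z}/q_2\mathbb{Z})$. Unravelling the definitions, this is precisely the map $\pi_{q_1} \times \pi_{q_2}$.

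For the second assertion, I would first note that, $\pi_{q_1}\times\pi_{q_2}$ being an isomorphism, it is equivalent to prove the equality of subgroups of $\SL_V(\mathbb{Z}/q_1\mathbb{Z})\times\SL_V(\mathbb{Z}/q_2\mathbb{Z})$
\[
(\pi_{q_1},\pi_{q_2})\big(K_V(q^{\prime}q_2)\big) = \pi_{q_1}\big(K_V(q^{\prime})\big) \times 1,
\]
where on the left $\pi_{q_1},\pi_{q_2}$ now denote the congruence homomorphisms out of $\SL_V(\mathbb{Z})$ (here using that $(\pi_{q_1}\times\pi_{q_2})\circ\pi_q$ agrees with $(\pi_{q_1},\pi_{q_2})$ on $\SL_V(\mathbb{Z})$). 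The inclusion ``$\subseteq$'' is immediate: if $A \in K_V(q^{\prime}q_2)$ then $A \equiv I$ modulo each of $q^{\prime}$ and $q_2$, so $\pi_{q_1}(A) \in \pi_{q_1}(K_V(q^{\prime}))$ while $\pi_{q_2}(A) = 1$. For ``$\supseteq$'', given $C \in K_V(q^{\prime})$ I would invoke surjectivity of the reduction $\SL_V(\mathbb{Z}) \to \SL_V(\mathbb{Z}/q\mathbb{Z})$ (standard: e.g. $\SL_V$ of the principal ideal ring $\mathbb{Z}/q\mathbb{Z}$ is generated by transvections, which lift to $\SL_V(\mathbb{Z})$), together with the first part, to produce $A \in \SL_V(\mathbb{Z})$ with $\pi_{q_1}(A) = \pi_{q_1}(C)$ and $\pi_{q_2}(A) = 1$. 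Then $A \equiv I \bmod q_2$, and $A \equiv C \equiv I \bmod q^{\prime}$ since $q^{\prime} \mid q_1$; as $\gcd(q^{\prime}, q_2) = 1$, the integer Chinese Remainder Theorem forces $A \equiv I \bmod q^{\prime}q_2$, i.e. $A \in K_V(q^{\prime}q_2)$, with $(\pi_{q_1},\pi_{q_2})(A) = (\pi_{q_1}(C), 1)$, as needed.

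The argument is essentially bookkeeping with reduction maps, and I do not expect a genuine obstacle; the one external ingredient is surjectivity of $\SL_V(\mathbb{Z}) \to \SL_V(\mathbb{Z}/q\mathbb{Z})$, which I would cite rather than reprove. The point demanding some care is keeping the several meanings of the overloaded symbol $\pi_{(-)}$ straight --- reduction out of $\SL_V(\mathbb{Z})$ versus reduction between finite congruence quotients --- and recording at the outset that the ``$q$'' in the statement means $q_1 q_2$.
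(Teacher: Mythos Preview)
Your proof is correct; the paper itself does not supply a proof of this lemma, treating it as the standard Chinese Remainder Theorem and stating it without argument. Your write-up fills that gap cleanly, and the only external ingredient you invoke --- surjectivity of $\SL_V(\mathbb{Z}) \to \SL_V(\mathbb{Z}/q\mathbb{Z})$ --- is indeed standard and appropriate to cite.
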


\begin{propn} \label{elemnrmlgenprop}
Let $V$ be finite with $\lvert V \rvert \geq 3$. 
Suppose that $q \geq 4$ is composite, and that $p$ is 
a prime divisor of $q$. 
Let $N \vartriangleleft \SL_V (\mathbb{Z})$ with 
$K_V (q) \leq N$.  
Suppose there exists $W \subseteq V$ with $\lvert W \rvert \geq 2$, 
such that $K_W (q/p) \leq N$. 
Then $K_V (q/p) \leq N$. 
\end{propn}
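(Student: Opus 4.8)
The plan is to push everything down to the finite quotient $\SL_V(\mathbb{Z}/q\mathbb{Z})$ and then to isolate a single prime power. Since $K_V(q) \leq N \vartriangleleft \SL_V(\mathbb{Z})$, the subgroup $N$ is recovered as $\pi_q^{-1}\big(\pi_q(N)\big)$, with $\pi_q(N)$ normal in $\SL_V(\mathbb{Z}/q\mathbb{Z})$; and $\pi_q(K_V(q/p)) = \ker\big(\SL_V(\mathbb{Z}/q\mathbb{Z}) \to \SL_V(\mathbb{Z}/(q/p)\mathbb{Z})\big)$, because any $\SL_V(\mathbb{Z})$-lift of a matrix trivial mod $q/p$ already lies in $K_V(q/p)$. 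So I would first reduce the proposition to: the normal closure in $\SL_V(\mathbb{Z}/q\mathbb{Z})$ of $\pi_q(K_W(q/p))$ contains $\pi_q(K_V(q/p))$.

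Next I would strip off the prime-to-$p$ part of $q$. Write $q = q_1 q_2$ with $q_1 = p^a$, $a = v_p(q) \geq 1$, and $q_2 = q/p^a$ coprime to $p$ (note $q$ composite forces $q/p = p^{a-1}q_2 \geq 2$). By Lemma \ref{CRTlem} one has $\SL_V(\mathbb{Z}/q\mathbb{Z}) \cong \SL_V(\mathbb{Z}/p^a\mathbb{Z}) \times \SL_V(\mathbb{Z}/q_2\mathbb{Z})$, and its ``moreover'' clause (with $q' = p^{a-1} \mid q_1$) identifies $\pi_q(K_V(q/p))$ with $L_V \times \{1\}$, where $L_U := \ker\big(\SL_U(\mathbb{Z}/p^a\mathbb{Z}) \to \SL_U(\mathbb{Z}/p^{a-1}\mathbb{Z})\big)$ for $U \in \{W,V\}$ (using $\pi_{p^a}(K_U(p^{a-1})) = L_U$, again by the elementary lifting argument, and noting $L_U = \SL_U(\mathbb{F}_p)$ when $a=1$); the same applies with $W$ in place of $V$, compatibly with $\SL_W \hookrightarrow \SL_V$ and with the trivial image in the $q_2$-factor since $q_2 \mid q/p$. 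Taking normal closures factor-by-factor, the task becomes: the normal closure of $L_W$ in $\SL_V(\mathbb{Z}/p^a\mathbb{Z})$ contains $L_V$.

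For $a = 1$ this is quick: $L_W = \SL_W(\mathbb{F}_p)$ contains the transvection $E_{v_0,w_0}$ for $v_0 \neq w_0$ in $W$ (possible as $|W| \geq 2$), all the $E_{x,y}$ are conjugate in $\SL_V(\mathbb{F}_p)$ just as in the proof of Theorem \ref{SLpresnthm}, and they generate $E_V(\mathbb{F}_p) = \SL_V(\mathbb{F}_p) = L_V$. For $a \geq 2$ I would first record that $L_V$ is abelian, via the isomorphism $I + p^{a-1}X \mapsto X \bmod p$ onto the additive group $\mathfrak{sl}_V(\mathbb{F}_p)$ of trace-zero matrices (the condition $2(a-1) \geq a$ kills every quadratic correction, both in $\det(I + p^{a-1}X) = 1 + p^{a-1}\mathrm{tr}(X) + \cdots$ and in products), under which conjugation by $\SL_V(\mathbb{Z}/p^a\mathbb{Z})$ becomes the adjoint action of $\SL_V(\mathbb{F}_p)$. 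As $L_W \leq L_V$ and $L_V$ is abelian, the normal closure of $L_W$ is exactly the $\SL_V(\mathbb{F}_p)$-submodule of $\mathfrak{sl}_V(\mathbb{F}_p)$ generated by $\mathfrak{sl}_W(\mathbb{F}_p)$; since $\mathfrak{sl}_W(\mathbb{F}_p)$ contains the matrix unit $e_{v_0,w_0}$ and the diagonal element $e_{v_0,v_0} - e_{w_0,w_0}$, conjugating by monomial matrices puts every $e_{x,y}$ ($x \neq y$) and every $e_{x,x} - e_{y,y}$ into the submodule, and those span $\mathfrak{sl}_V(\mathbb{F}_p) = L_V$.

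I expect the $a \geq 2$ case to be the delicate one, and in particular the point that the \emph{whole} subgroup $K_W(q/p)$ is needed rather than just a single elementary matrix inside it: mod $q$, the long transvections $E_{x,y}(q/p)$ span only the strictly-triangular part of $\mathfrak{sl}_V(\mathbb{F}_p)$, so the ``diagonal'' directions $e_{x,x} - e_{y,y}$ have to be extracted from a genuinely non-elementary member of $K_W(q/p)$ — e.g.\ the image of $\mathrm{diag}(1 + p^{a-1}, 1 - p^{a-1}, 1, \ldots, 1)$ supported on $W$, which is trivial mod $p^{a-1}$ and has determinant $1 - p^{2(a-1)} \equiv 1 \pmod{p^a}$. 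Once that element (equivalently, the full $\mathfrak{sl}_W(\mathbb{F}_p)$) is in hand the module-theoretic conclusion is routine. Note that the congruence subgroup property (Theorem \ref{CSPThm}) is not needed here; only the Chinese Remainder decomposition and the elementary structure of the principal congruence subgroups are used.
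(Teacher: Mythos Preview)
Your proof is correct and follows the same two-case structure as the paper (split on whether $p^2 \mid q$, i.e.\ your $a=1$ versus $a\geq 2$): in the first case both arguments use that a transvection normally generates $\SL_V(\mathbb{F}_p)$ (the paper phrases this via quasisimplicity), and in the second both identify $K_V(q/p)/K_V(q)$ with the trace-zero matrices over $\mathbb{F}_p$ and move the spanning elements $e_{v,w}$ and $e_{v,v}-e_{w,w}$ around by conjugating with permutation matrices. Your explicit CRT reduction to the $p$-primary factor and the $\mathfrak{sl}_V(\mathbb{F}_p)$ language make the presentation a little more systematic than the paper's, but the content is the same.
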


\begin{proof}
We distinguish two cases. 
If $p^2$ does not divide $q$, then by Lemma \ref{CRTlem} 
we have that $K_V (q/p)/K_V (q) \cong \SL_V (\mathbb{Z}/p\mathbb{Z})$ 
is quasisimple (by the assumption on $\lvert V\rvert$), with: 
\begin{center}
$K_W (q/p) K_V (q) / K_V (q) \leq N \cap K_V (q/p) / K_V (q) \vartriangleleft K_V (q/p)/K_V (q)$, 
\end{center}
so $N \cap K_V (q/p) / K_V (q)$ is not central in $K_V (q/p)/K_V (q)$. 
We conclude: 
\begin{center}
$N \cap K_V (q/p) / K_V (q) = K_V (q/p)/K_V (q)$ 
\end{center}
so that $K_V (q/p) \subseteq N$. 

Alternatively, if $p^2$ divides $q$, then 
$K_V (q/p) / K_V (q) \cong (\mathbb{Z}/p\mathbb{Z}) ^ {\lvert V \rvert^2 - 1}$ 
is spanned by 
$\lbrace E_{v,w}(q/p):v \neq w \rbrace\cup \lbrace D_{v,w}(1 + q/p):v\neq w \rbrace$ 
where for $\alpha \in (\mathbb{Z}/q\mathbb{Z})^{\ast}$, 
$D_{v,w} (\alpha)$ acts on the basis $V$ via: 
\begin{center}
$D_{v,w} (\alpha) : v \mapsto \alpha v,w \mapsto \alpha^{-1} w, u \mapsto u$ 
for $v \neq u \neq w$. 
\end{center}
For any $v,w,v',w' \in V$ with $v\neq w$, $v'\neq w'$, 
$E_{v,w}(q/p)$ (respectively $D_{v,w}(1 + q/p)$) 
is conjugate to $E_{v',w'}(q/p)^{\pm 1}$ 
(resp. $D_{v',w'}(1 + q/p)^{\pm 1}$) via 
a permutation matrix. 
Since we have $E_{v,w}(q/p) , D_{v,w}(1 + q/p) \in N / K_V(q)$, 
for some $v,w \in W$, the required claim follows. 
\end{proof}

\begin{propn} \label{elemonlynrmlsubgrpprop}
Let $V$ be finite with $\lvert V \rvert \geq 3$ and suppose $(\lvert V \rvert,q)=1$. 
Let $K_V (q) \leq N \vartriangleleft \SL_V(\mathbb{Z})$. 
\begin{itemize}
\item[(i)] If $\pi_q (N)$ is central 
in $\SL_V(\mathbb{Z}/q\mathbb{Z})$, 
then $\lvert N : K_V (q) \rvert \leq \lvert V \rvert ^{\log_2 q}$; 

\item[(ii)] If $\pi_q (N)$ is non-central 
in $\SL_V(\mathbb{Z}/q\mathbb{Z})$, 
then $q$ has a proper divisor $r$ such that $K_V (r) \leq N$. 

\end{itemize}
\end{propn}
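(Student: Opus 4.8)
For (i): the reduction $\SL_V(\mathbb{Z}) \to \SL_V(\mathbb{Z}/q\mathbb{Z})$ is surjective with kernel $K_V(q)$, so $\pi_q(N)$ is a normal subgroup of $\SL_V(\mathbb{Z}/q\mathbb{Z})$ of order $\lvert N : K_V(q)\rvert$; if it is central, this order is at most $\lvert Z(\SL_V(\mathbb{Z}/q\mathbb{Z}))\rvert = \lvert\mu_{\lvert V\rvert}(\mathbb{Z}/q\mathbb{Z})\rvert$, the number of $\lvert V\rvert$-th roots of unity in $\mathbb{Z}/q\mathbb{Z}$. By Lemma \ref{CRTlem} this equals $\prod_{p^a\|q}\lvert\mu_{\lvert V\rvert}(\mathbb{Z}/p^a\mathbb{Z})\rvert$, and $(\lvert V\rvert,q)=1$ forces each factor to have order dividing $\lvert V\rvert$ (for odd $p$ the unit group $(\mathbb{Z}/p^a\mathbb{Z})^\ast$ is cyclic, giving $(\lvert V\rvert,p-1)$ roots; for $p=2$ coprimality makes $\lvert V\rvert$ odd and the factor trivial). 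Hence $\lvert N : K_V(q)\rvert \leq \lvert V\rvert^{\omega(q)} \leq \lvert V\rvert^{\log_2 q}$, where $\omega(q)$ is the number of distinct primes dividing $q$ and $2^{\omega(q)}\leq q$.

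For (ii) I would prove the contrapositive: if $K_V(r)\not\leq N$ for every proper divisor $r\mid q$, then $\pi_q(N)$ is central. \emph{Step 1: reduce to prime powers.} Decompose $\SL_V(\mathbb{Z}/q\mathbb{Z})\cong\prod_{p^a\|q}\SL_V(\mathbb{Z}/p^a\mathbb{Z})$ via Lemma \ref{CRTlem}. For $\lvert V\rvert\geq 3$ each factor is perfect, its only non-abelian composition factor $\PSL_{\lvert V\rvert}(\mathbb{F}_p)$ determining $p$; so distinct factors share no nontrivial quotient and Goursat's lemma gives $\pi_q(N)=\prod_p M_p$ with $M_p\vartriangleleft\SL_V(\mathbb{Z}/p^a\mathbb{Z})$ the $p$-projection. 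Pulling the slice $M_p$ back through $\pi_q$ produces $N_p\vartriangleleft\SL_V(\mathbb{Z})$ with $K_V(q)\leq N_p\leq N$, $N_p\leq K_V(q/p^a)$, $N_p/K_V(q)\cong M_p$; and if $M_p$ contains the level-$p^j$ congruence subgroup of $\SL_V(\mathbb{Z}/p^a\mathbb{Z})$, then $K_V(q/p^{a-j})\leq N_p\leq N$, a proper congruence subgroup once $j<a$. So it suffices to prove: a non-central $M\vartriangleleft\SL_V(\mathbb{Z}/p^a\mathbb{Z})$ with $\lvert V\rvert\geq 3$ and $p\nmid\lvert V\rvert$ contains the level-$p^j$ congruence subgroup for some $j<a$.

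\emph{Step 2: the prime-power case,} by induction on $a$. The base $a=1$ is the simplicity of $\PSL_{\lvert V\rvert}(\mathbb{F}_p)$. For $a\geq 2$, set $G=\SL_V(\mathbb{Z}/p^a\mathbb{Z})$ and $K(p^i)=\ker(G\to\SL_V(\mathbb{Z}/p^i\mathbb{Z}))$; for $1\leq i\leq a-1$ the layer $K(p^i)/K(p^{i+1})$ carries the adjoint action of $\SL_V(\mathbb{F}_p)$ on $\mathfrak{sl}_{\lvert V\rvert}(\mathbb{F}_p)$, which is irreducible exactly because $p\nmid\lvert V\rvert$. If $M\cap K(p)\neq 1$, let $i_0$ be largest with $M\cap K(p^{i_0})\neq 1$: this intersection is a nonzero $\SL_V(\mathbb{F}_p)$-submodule of the $i_0$-th layer, hence fills that layer, so one may pick $x\in M\cap K(p^{i_0})$ with nilpotent leading term; then $x^p\in M\cap K(p^{i_0+1})$, and if $i_0<a-1$ the element $x^p$ still has nonzero leading term, contradicting the maximality of $i_0$. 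Hence $i_0=a-1$ and $K(p^{a-1})\leq M$ (take $j=a-1$). If instead $M\cap K(p)=1$, then $M$ would be a normal complement to $K(p)$ in $G$; this is ruled out by fabricating from a non-scalar element of $M$ a full two-element-block congruence subgroup $K_W(q/p)\leq N$ (using explicit commutators and conjugations by elementary and permutation matrices) and then invoking Proposition \ref{elemnrmlgenprop} to upgrade it to $K_V(q/p)\leq N$, i.e. $K(p^{a-1})\leq M$ --- contradicting $M\cap K(p^{a-1})=1$.

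The step I expect to be the main obstacle is that fabrication of the two-block congruence subgroup, together with the bookkeeping of congruence levels under commutators and $p$-th powers (in particular at $p=2$, and in distinguishing $p^2\mid q$ from $p^2\nmid q$); this is exactly the technical content isolated in Proposition \ref{elemnrmlgenprop}, whose purpose is to convert ``$M$ and a congruence layer together generate a congruence subgroup'' into ``$M$ alone contains one''. The Goursat reduction and the appeals to simplicity of $\PSL_{\lvert V\rvert}(\mathbb{F}_p)$ and to irreducibility of the adjoint representation should be routine.
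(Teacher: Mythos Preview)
Part (i) is correct and matches the paper's argument (the paper phrases the bound on $\lvert Z_i\rvert$ via Hensel's Lemma; you use the structure of $(\mathbb{Z}/p^a\mathbb{Z})^\ast$ directly; these amount to the same thing).

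For part (ii) the paper takes a much shorter path: after the CRT reduction to $q=p^a$ --- which works simply because the factors $\SL_V(\mathbb{Z}/p_i^{a_i}\mathbb{Z})$ have pairwise coprime orders, so every normal subgroup of their product is a product of normal subgroups (your Goursat argument is correct but heavier than necessary) --- it simply cites Klingenberg's classification \cite{Kling} of normal subgroups of $\SL_n$ over a local ring and is done. You instead attempt to reprove the relevant piece of that classification directly. Your Case~1 ($M\cap K(p)\neq 1$) is essentially right, though ``nilpotent leading term'' should be sharpened to ``leading term $\bar A$ with $\bar A^2=0$'' (an elementary matrix will do), so that the $p=2$, $i_0=1$ computation of $x^2$, whose leading term is $\bar A+\bar A^2$, still yields something nonzero.

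Your Case~2 ($M\cap K(p)=1$) has a genuine gap. You jump to ``$M$ is a normal complement to $K(p)$'', but this needs $MK(p)=G$, i.e.\ $\pi_p(M)=\SL_V(\mathbb{F}_p)$, which you have not established: a priori $\pi_p(M)$ could lie in the centre of $\SL_V(\mathbb{F}_p)$. That subcase is easy to close --- if $\pi_p(M)$ is central then $[M,G]\leq M\cap K(p)=1$, forcing $M\leq Z(G)$, contrary to hypothesis --- but it must be said. Once you do have $G=M\times K(p)$, the contradiction is also far simpler than the route you sketch: $M$ would then centralise $K(p)$, yet any lift $x\in M$ of the elementary matrix $E_{12}(1)$ fails to commute with $E_{21}(p)\in K(p)$ (one checks $[x,E_{21}(p)]\in K(p)\setminus K(p^2)$ whenever $a\geq 2$). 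There is no need to ``fabricate $K_W(q/p)$'' or to invoke Proposition~\ref{elemnrmlgenprop}; in the paper that proposition is used later, in the proof of Theorem~\ref{elemthm}, not here.
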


\begin{proof}
Let $q = p_1 ^{a_1} \cdots p_l ^{a_l}$ be the prime factorization of $q$. 
By Lemma \ref{CRTlem}, 
\begin{equation*}
\SL_V(\mathbb{Z}/q\mathbb{Z}) 
\cong \prod_{i=1} ^l \SL_V(\mathbb{Z}/p_i ^{a_i} \mathbb{Z}),
\end{equation*}
with the projection of $\pi_q (N)$ to the $i$th factor on the right 
being precisely $\pi_{p_i ^{a_i}} (N)$. 

For (i), note that $l \leq \log_2 q$, so it suffices to show that 
$Z_i = Z \big( \SL_V(\mathbb{Z}/p_i ^{a_i} \mathbb{Z}) \big)$ 
has order at most $\lvert V \rvert$. 
This last claim can be deduced from Hensel's Lemma: 
since $Z_i$ consists of scalar matrices, 
its elements correspond to solutions to the equation $x^{\lvert V \rvert}=1$ 
in $\mathbb{Z}/p_i ^{a_i} \mathbb{Z}$, 
and these are distinct modulo $p_i$, as $\lvert V\rvert$ and $p_i$ 
are coprime. 

For (ii), by Lemma \ref{CRTlem} there exists $i$ such that $\pi_{p_i ^{a_i}} (N)$ 
is non-central in $\SL_V(\mathbb{Z}/p_i ^{a_i} \mathbb{Z})$. 
By projecting to this factor we may therefore suppose that $q = p_i ^{a_i}$ 
and our claim is that $\pi_q (K_V (q/p_i)) \leq \pi_q (N)$. 
This follows from the description of normal subgroups of 
$\SL_V(\mathbb{Z}/p^a \mathbb{Z})$ given, for example, 
in \cite{Kling} (see Theorem 3 and the Remarks following), 
noting that every nonzero ideal of $\mathbb{Z}/p^a \mathbb{Z}$ 
contains $p^{a-1}\mathbb{Z}/p^a \mathbb{Z}$. 
\end{proof}

Finally, we record an easy number-theoretic observation. 

\begin{lem} \label{notheorylem}
Let $m \geq 8$ and $q \in \mathbb{N}$. 
Then there exists a prime $p \in [m/4,m]$ and $r \mid q$ 
such that $(p,r)=1$ and $r \geq \sqrt{q}$. 
\end{lem}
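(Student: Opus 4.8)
The plan is to decouple the two demands in the statement — the prime $p$ must lie in the window $[m/4,m]$, and $q$ must possess a divisor $r\ge\sqrt q$ coprime to $p$ — by showing that the second demand excludes at most one prime, whereas the window always contains at least two primes.

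First I would produce two distinct primes in $[m/4,m]$. Applying Bertrand's postulate to $n=\lfloor m/4\rfloor$, which is $\ge 2$ precisely because $m\ge 8$, yields a prime $p_1$ with $\lfloor m/4\rfloor<p_1<2\lfloor m/4\rfloor\le m/2$; since $p_1$ is an integer exceeding $\lfloor m/4\rfloor$ we also have $p_1\ge\lfloor m/4\rfloor+1>m/4$, so $p_1\in(m/4,m/2)$. Applying Bertrand's postulate to $n=\lfloor m/2\rfloor$ (which is $\ge 4$) yields a prime $p_2$ with $\lfloor m/2\rfloor<p_2<2\lfloor m/2\rfloor\le m$, and likewise $p_2\ge\lfloor m/2\rfloor+1>m/2$. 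Hence $p_1<m/2<p_2$, both lie in $[m/4,m]$, and they are distinct.

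Next I would determine, for a fixed prime $p$, when $q$ has no divisor $r\ge\sqrt q$ with $(p,r)=1$. Write $q=p^{a}s$ with $p\nmid s$; then $s$ is exactly the largest divisor of $q$ coprime to $p$. Call $p$ \emph{bad} if $s<\sqrt q$, equivalently if $p^{a}=q/s>\sqrt q$. The key point is that at most one prime can be bad: if $p_1\ne p_2$ were both bad, write $q=p_i^{a_i}s_i$ with $p_i\nmid s_i$; then $p_1^{a_1}$ and $p_2^{a_2}$ are the exact prime-power parts of $q$ at distinct primes, hence coprime divisors of $q$, so their product also divides $q$, giving $q\ge p_1^{a_1}p_2^{a_2}>\sqrt q\cdot\sqrt q=q$, a contradiction.

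Combining the two steps, the interval $[m/4,m]$ contains at least two primes and at most one of them is bad, so some prime $p\in[m/4,m]$ is not bad; for this $p$ there is a divisor $r\mid q$ with $(p,r)=1$ and $r\ge\sqrt q$, which is the claim. I do not anticipate a genuine obstacle here; the only points needing a little care are the floor-function bookkeeping in the Bertrand step (which is exactly why $m\ge 8$ is assumed) and the observation that the exact $p_i$-parts of $q$ multiply to a divisor of $q$.
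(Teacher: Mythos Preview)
Your proof is correct and follows essentially the same approach as the paper's: both use Bertrand's postulate (twice) to obtain two distinct primes in $[m/4,m]$ and then observe that at most one prime can have its exact prime-power part in $q$ exceeding $\sqrt{q}$. The paper organises this as a case split (try the prime in $(m/2,m)$ first; if it fails, switch to a prime in $(m/4,m/2)$ and note its $p$-free part of $q$ is divisible by the first prime-power), whereas you abstract the ``bad prime'' notion up front --- but the underlying argument is identical.
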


\begin{proof}
Recall Bertrand's Postulate: for any $n \geq 4$, 
there exists a prime number in $(n/2,n)$. 
So let $\tilde{p}$ be a prime in $(m/2,m)$. 
Let $a \in \mathbb{N}$ be maximal such that $\tilde{p}^a \mid q$. 
If $\tilde{p}^a \leq \sqrt{q}$ then take $\tilde{p} = p$ 
and $r = q/\tilde{p}^a$. 
If $\tilde{p}^a > \sqrt{q}$ then let $p$ be any 
prime in $(m/4,m)$ with $p \neq \tilde{p}$ 
(such $p$ exist; for instance any prime in $(m/4,m/2)$ will do). 
Then let $b \in \mathbb{N}$ be maximal such that $p^b \mid q$, 
and let $r = q / p^b$. 
\end{proof}

We now prove Theorem \ref{elemthm}. 
The key to the upper bound in (\ref{elemthmgeneqn}) 
is the following construction. 
Henceforth we write $B(n) = B_{S,\omega_0} (n)$. 

\begin{propn} \label{elemUBprop}
Let $A(n) = \lbrace g \in E_{B(n)} (\mathbb{Z})
: \lVert g \rVert_{\infty} \leq 2^n \rbrace$; 
let $I(n) = A(n) \cdot B_S (n) 
\subseteq \mathcal{E}(\Gamma,\Omega,\mathbb{Z})$, 
and let $(\rho_n , \theta_n)$ be a local embedding 
of $\big( B_S(n) , B(n) \big)$ into $(Q_n,X_n)$. 
Then for any $q_n > 2^{n+1}$ there is a local embedding of $I(n)$ 
into $\mathcal{E}(Q_n,X_n,\mathbb{Z}/q_n \mathbb{Z})$. 
\end{propn}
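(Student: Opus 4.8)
The plan is to construct an explicit local embedding $\Phi_n : I(n) \to \mathcal{E}(Q_n, X_n, \mathbb{Z}/q_n\mathbb{Z})$ in direct analogy with the construction of $\Phi_n$ in Proposition \ref{RFextnprop}, replacing the symmetric-group ingredient with the elementary-matrix ingredient and taking reductions mod $q_n$. The injection $\theta_n : B(n) \to X_n$ induces an $R$-linear injection $R[\theta_n] : R[B(n)] \hookrightarrow R[X_n]$ (for any ring $R$), which in turn induces an injective homomorphism $\tilde\theta_n : E_{B(n)}(R) \to E_{X_n}(R)$ sending $E_{v,w}(r) \mapsto E_{\theta_n(v),\theta_n(w)}(r)$ (and fixing all basis vectors outside $\im\theta_n$). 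Composing with reduction of entries mod $q_n$ gives a homomorphism $\overline{\theta_n} : E_{B(n)}(\mathbb{Z}) \to E_{X_n}(\mathbb{Z}/q_n\mathbb{Z})$. I would then define $\Phi_n(g \cdot h) = \overline{\theta_n}(g) \cdot \rho_n(h)$ for $g \in A(n)$, $h \in B_S(n)$. The choice $q_n > 2^{n+1}$ is precisely what makes $\Phi_n$ injective on $I(n)$: two elements $g, g' \in A(n)$ have entries in $[-2^n, 2^n]$, so $g \equiv g' \bmod q_n$ (after applying $\overline{\theta_n}$, which is injective before reduction) forces $g = g'$ since their difference has entries in $[-2^{n+1}, 2^{n+1}]$, an interval of length strictly less than $q_n$; combined with injectivity of $\rho_n$ this gives injectivity of $\Phi_n$.

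The substantive work is to verify that $\Phi_n$ is a partial homomorphism on $I(n)$. Given $(g_1 h_1)(g_2 h_2) = g_1 \, {}^{h_1^{-1}}\!g_2 \, h_1 h_2$, this product lies in $I(n)$ iff (i) $h_1 h_2 \in B_S(n)$ and (ii) $g_1 \cdot {}^{h_1^{-1}}\!g_2 \in A(n)$, where ${}^{h^{-1}}\!g$ denotes the conjugate; condition (ii) entails both that $\supp({}^{h_1^{-1}}\!g_2) \subseteq B(n)$, equivalently $g_2$ is supported on $B(n) h_1 \cap B(n)$, and that the entries of the product stay within $[-2^n, 2^n]$. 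Under these hypotheses I would compute, as in Proposition \ref{RFextnprop}, that $\Phi_n((g_1 h_1)(g_2 h_2)) = \overline{\theta_n}(g_1)\,\overline{\theta_n}({}^{h_1^{-1}}\!g_2)\,\rho_n(h_1)\rho_n(h_2)$ using (i), while $\Phi_n(g_1 h_1)\Phi_n(g_2 h_2) = \overline{\theta_n}(g_1)\, {}^{\rho_n(h_1)^{-1}}\overline{\theta_n}(g_2)\,\rho_n(h_1)\rho_n(h_2)$, so the claim reduces to the identity $\overline{\theta_n}({}^{h_1^{-1}}\!g_2) = {}^{\rho_n(h_1)^{-1}}\overline{\theta_n}(g_2)$ in $E_{X_n}(\mathbb{Z}/q_n\mathbb{Z})$. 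This is the analogue of checking $\tau_1 = \tau_2$ there.

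To prove that identity, I would reduce to the case where $g_2$ is a single transvection $E_{v,w}(r)$ with $v, w \in B(n) h_1 \cap B(n)$ — such transvections generate the relevant subgroup, since $g_2$ is supported on that set — and then check the two sides act identically on each basis vector of $X_n$. On one side, ${}^{h_1^{-1}}\!E_{v,w}(r) = E_{vh_1^{-1}, wh_1^{-1}}(r)$ with $vh_1^{-1}, wh_1^{-1} \in B(n)$ (this uses $v, w \in B(n)h_1$), so $\overline{\theta_n}$ of it is $E_{\theta_n(vh_1^{-1}), \theta_n(wh_1^{-1})}(r)$; on the other side, ${}^{\rho_n(h_1)^{-1}}E_{\theta_n(v), \theta_n(w)}(r)$ is a conjugate in $E_{X_n}$ by the $\mathcal{E}(Q_n, X_n, \mathbb{Z}/q_n\mathbb{Z})$-action, which sends basis vector $\theta_n(v)$ to $\theta_n(v)\rho_n(h_1)^{-1}$. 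The equality of the two then follows from the defining property \eqref{LEFactioneqn} of the local embedding $(\rho_n, \theta_n)$: whenever $\omega h_1^{-1} \in B(n)$ we have $\theta_n(\omega h_1^{-1}) = \theta_n(\omega)\rho_n(h_1^{-1}) = \theta_n(\omega)\rho_n(h_1)^{-1}$ (the last step using $h_1, h_1^{-1} \in B_S(n)$ so $\rho_n(h_1)\rho_n(h_1^{-1}) = \rho_n(e) = e$). A short case analysis on whether a given basis vector of $X_n$ lies in $\theta_n(B(n))$, in $\theta_n(B(n))\rho_n(h_1)^{-1}$, in neither, or in one but not the other — exactly the bookkeeping carried out in Proposition \ref{RFextnprop} — finishes the verification. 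I expect this case analysis, making sure the relevant preimages under $\theta_n$ land back in $B(n)$ and that no spurious basis vectors are moved, to be the main obstacle, though it is routine once the framework is in place; the mod-$q_n$ reduction plays no role here beyond ensuring injectivity, since $\overline{\theta_n}$ and conjugation by $\rho_n(h_1)$ are honest homomorphisms.
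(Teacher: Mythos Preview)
Your proposal is correct and follows essentially the same architecture as the paper's proof. The paper first builds the map into $\mathcal{E}(Q_n,X_n,\mathbb{Z})$ and only afterwards composes with the congruence map $\pi_{q_n}$, whereas you fold the reduction mod $q_n$ into $\overline{\theta_n}$ from the start; this is a harmless reorganisation. Your reduction of the key identity $\overline{\theta_n}\big((g_2)^{h_1^{-1}}\big) = \overline{\theta_n}(g_2)^{\rho_n(h_1)^{-1}}$ to the case of a single transvection is a genuine simplification over the paper, which instead refers the reader back to the basis-vector case analysis of Proposition~\ref{RFextnprop}; since both sides are group homomorphisms in $g_2$ and since $g_2 \in E_{B(n)}(\mathbb{Z}) \cap E_{B(n)h_1}(\mathbb{Z}) = E_{B(n)\cap B(n)h_1}(\mathbb{Z})$ is a product of such transvections, your reduction is valid and makes the subsequent case analysis you mention unnecessary --- once the indices match via (\ref{LEFactioneqn}) you are done. (A small notational point: with the paper's right-action convention $n^g = g^{-1}ng$, the conjugate appearing in $(g_1 h_1)(g_2 h_2)$ is $g_2^{h_1^{-1}} = h_1 g_2 h_1^{-1}$, so your left-superscript exponent should read $h_1$ rather than $h_1^{-1}$; this does not affect the argument.)
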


\begin{proof}
Let $\tilde{\theta}_n :E_{B(n)}(\mathbb{Z})\rightarrow E_{X_n}(\mathbb{Z})$ be the injection induced by 
$\theta_n :B(n)\rightarrow X_n$. 
Note that for $A \in E_{B(n)}(\mathbb{Z})$, 
$\Vert \tilde{\theta}_n(A) \Vert_{\infty} = \Vert A \Vert_{\infty}$. 

Let $\Phi_n : I(n) \rightarrow \mathcal{E}(Q_n,X_n,\mathbb{Z})$ 
be given by $\Phi_n(A \cdot g) = \tilde{\theta}_n(A)\cdot \rho_n(g)$. 
We claim that $\Phi_n$ is a local embedding. 
Given this claim, the conclusion easily follows: 
the congruence homomorphism 
$\pi_{q_n} : E_{X_n} (\mathbb{Z}) \rightarrow E_{X_n} (\mathbb{Z}/q_n \mathbb{Z})$ extends to a homomorphism 
$\pi_{q_n}:\mathcal{E}(Q_n,X_n,\mathbb{Z}) \rightarrow 
\mathcal{E}(Q_n,X_n,\mathbb{Z}/q_n \mathbb{Z})$ 
(which is the identity map on $Q_n$). 
Then $\pi_{q_n} \circ \Phi_n: I(n) \rightarrow 
\mathcal{E}(Q_n,X_n,\mathbb{Z}/q_n \mathbb{Z})$ 
is the desired local embedding: 
if  $A_1,A_2 \in A(n)$ and $g_1,g_2 \in B_S(n)$, with: 
\begin{align*}
\pi_{q_n}(\tilde{\theta}_n(A_1))\cdot \rho_n (g_1)
&=(\pi_{q_n} \circ \Phi_n)(A_1 \cdot g_1) \\
&=(\pi_{q_n} \circ \Phi_n)(A_2 \cdot g_2) \\
&= \pi_{q_n}(\tilde{\theta}_n(A_2))\cdot \rho_n (g_2)
\end{align*}
then $g_1 = g_2$ and $\pi_{q_n}(\tilde{\theta}_n(A_1)) 
= \pi_{q_n}(\tilde{\theta}_n(A_2))$. 
The latter implies $\tilde{\theta}_n(A_1)= \tilde{\theta}_n(A_2)$ 
so that $A_1 = A_2$, 
since $\Vert \tilde{\theta}_n(A_i) \Vert_{\infty} \leq 2^n$ 
and $q_n > 2^{n+1}$. 

Therefore let $A_1,A_2 \in A(n)$ and $g_1,g_2 \in B_S(n)$. 
Then $(A_1\cdot g_1)(A_2\cdot g_2) 
= \big( A_1 A_2 ^{g_1 ^{-1}} \big)\cdot (g_1 g_2) \in I(n)$ 
iff (i) $g_1 g_2 \in B_S(n)$; 
(ii) $A_1 A_2 ^{g_1 ^{-1}} \in E_{B(n)}(\mathbb{Z})$ 
and (iii) $\Vert A_1 A_2 ^{g_1 ^{-1}} \Vert_{\infty} \leq 2^n$. 
Since $A_1 \in E_{B(n)}(\mathbb{Z})$, (ii) 
is equivalent to 
(ii') $A_2 \in \big( E_{B(n)}(\mathbb{Z}) \big)^{g_1} 
= E_{B(n)g_1}(\mathbb{Z})$. Under these conditions: 
\begin{align*}
\Phi_n \big( (A_1 g_1)(A_2 g_2) \big)
&= \tilde{\theta}_n (A_1)\tilde{\theta}_n (A_2 ^{g_1 ^{-1}})
\rho_n(g_1)\rho_n(g_2)\\
\Phi_n (A_1 g_1)\Phi_n (A_2 g_2)
&=\tilde{\theta}_n (A_1)\tilde{\theta}_n (A_2)^{\rho_n(g_1) ^{-1}}
\rho_n(g_1)\rho_n(g_2)
\end{align*}
so, writing $\tau_1 = \tilde{\theta}_n (A_2)^{\rho_n(g_1) ^{-1}}$ 
and $\tau_2 = \tilde{\theta}_n (A_2 ^{g_1 ^{-1}})$, 
it suffices to show that $\tau_1 = \tau_2$. 
We now conclude exactly as in the proof of Proposition \ref{RFextnprop}, 
comparing the effect of $\tau_1$ and $\tau_2$ on the standard basis 
for the module $\mathbb{Z}^{X_n}$. 
\end{proof}

There is also a variant of the construction of Proposition \ref{elemUBprop} 
for $\mathcal{E}(\Gamma,\Omega,\mathbb{F}_p)$. 

\begin{propn} \label{p-elemUBprop}
Let $A(n) = E_{B(n)} (\mathbb{F}_p)$; 
let $I(n) = A(n) \cdot B_S (n) 
\subseteq \mathcal{E}(\Gamma,\Omega,\mathbb{F}_p)$, 
and let $(\rho_n , \theta_n)$ be a local embedding 
of $\big( B_S(n) , B(n) \big)$ into $(Q_n,X_n)$. 
Then there is a local embedding of $I(n)$ 
into $\mathcal{E}(Q_n,X_n,\mathbb{F}_p)$. 
\end{propn}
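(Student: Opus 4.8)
The plan is to mirror the proof of Proposition \ref{elemUBprop}, exploiting two simplifications that arise because $\mathbb{F}_p$ is already finite. First I would let $\tilde{\theta}_n : E_{B(n)}(\mathbb{F}_p) \rightarrow E_{X_n}(\mathbb{F}_p)$ be the injective homomorphism induced by the injection $\theta_n : B(n) \rightarrow X_n$ (that is, extend $\theta_n$ to an $\mathbb{F}_p$-linear embedding $\mathbb{F}_p^{B(n)} \hookrightarrow \mathbb{F}_p^{X_n}$ and conjugate), and define $\Phi_n : I(n) \rightarrow \mathcal{E}(Q_n,X_n,\mathbb{F}_p)$ by $\Phi_n(A \cdot g) = \tilde{\theta}_n(A)\cdot\rho_n(g)$. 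Because $\mathcal{E}(\Gamma,\Omega,\mathbb{F}_p)$ is a semidirect product, every element of $I(n)$ is uniquely of the form $A\cdot g$ with $A \in A(n)$ and $g \in B_S(n)$, so $\Phi_n$ is well-defined; and it is injective, since the semidirect-product decomposition in $\mathcal{E}(Q_n,X_n,\mathbb{F}_p)$ forces $\rho_n(g_1)=\rho_n(g_2)$ and $\tilde{\theta}_n(A_1)=\tilde{\theta}_n(A_2)$, whence $g_1=g_2$ (as $\rho_n$ is injective on $B_S(n)$) and $A_1=A_2$ (as $\tilde{\theta}_n$ is injective). Note that here, unlike in Proposition \ref{elemUBprop}, there is no $\lVert\cdot\rVert_\infty$ constraint to carry around, and no subsequent congruence reduction $\pi_{q_n}$ is needed: $\mathcal{E}(Q_n,X_n,\mathbb{F}_p)$ is already a finite group, $Q_n$ and $X_n$ being finite.

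Next I would check that $\Phi_n$ is a partial homomorphism. For $A_1,A_2 \in A(n)=E_{B(n)}(\mathbb{F}_p)$ and $g_1,g_2 \in B_S(n)$, the product $(A_1 g_1)(A_2 g_2) = \big(A_1 A_2^{g_1^{-1}}\big)\cdot(g_1 g_2)$ lies in $I(n)$ precisely when (i) $g_1 g_2 \in B_S(n)$ and (ii) $A_1 A_2^{g_1^{-1}} \in E_{B(n)}(\mathbb{F}_p)$; since $A_1 \in E_{B(n)}(\mathbb{F}_p)$, condition (ii) is equivalent to (ii') $A_2 \in E_{B(n)}(\mathbb{F}_p)^{g_1} = E_{B(n)g_1}(\mathbb{F}_p)$. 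Assuming (i) and (ii'), one expands $\Phi_n\big((A_1 g_1)(A_2 g_2)\big) = \tilde{\theta}_n(A_1)\tilde{\theta}_n(A_2^{g_1^{-1}})\rho_n(g_1)\rho_n(g_2)$ (using (i) so that $\rho_n(g_1 g_2)=\rho_n(g_1)\rho_n(g_2)$) and $\Phi_n(A_1 g_1)\Phi_n(A_2 g_2) = \tilde{\theta}_n(A_1)\tilde{\theta}_n(A_2)^{\rho_n(g_1)^{-1}}\rho_n(g_1)\rho_n(g_2)$, so the claim reduces to showing that $\tau_1 := \tilde{\theta}_n(A_2)^{\rho_n(g_1)^{-1}}$ and $\tau_2 := \tilde{\theta}_n(A_2^{g_1^{-1}})$ coincide as elements of $E_{X_n}(\mathbb{F}_p) \leq \Aut_{\mathbb{F}_p}(\mathbb{F}_p^{X_n})$.

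Finally I would verify $\tau_1 = \tau_2$ by comparing their action on the standard basis vectors of $\mathbb{F}_p^{X_n}$, exactly as at the end of the proof of Proposition \ref{elemUBprop}, which in turn follows the support bookkeeping of Proposition \ref{RFextnprop}: the constraints that $A_1 \in E_{B(n)}(\mathbb{F}_p)$ and (ii') impose on supports pin down the effect of both $\tau_1$ and $\tau_2$ on each basis vector $x \in X_n$, splitting into the cases $x \in \theta_n(B(n)g_1^{-1}\cap B(n))$, $x \notin \theta_n(B(n))$, and $x \in \theta_n(B(n))\setminus\theta_n(B(n)g_1^{-1})$, and in each case showing the two agree. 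I expect this basis comparison to be the only point requiring any care, but it is formally identical to the computation already performed in Proposition \ref{RFextnprop}, with finitely supported permutations now replaced by elementary $\mathbb{F}_p$-linear automorphisms; the two genuine differences from Proposition \ref{elemUBprop} — no norm to track, and no congruence reduction needed — only shorten the argument.
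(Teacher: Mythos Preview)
Your proposal is correct and matches the paper's proof essentially verbatim: the paper likewise defines $\tilde{\theta}_n$ and $\Phi_n(A\cdot g)=\tilde{\theta}_n(A)\cdot\rho_n(g)$, notes that the argument is the same as in Proposition~\ref{elemUBprop} but easier (no norm bookkeeping, no congruence reduction), and defers the verification that $\Phi_n$ is a local embedding to the identical computation already carried out there.
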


\begin{proof}
This is much the same as the proof of Proposition \ref{elemUBprop}, 
but easier: let $\tilde{\theta}_n :E_{B(n)}(\mathbb{F}_p)\rightarrow E_{X_n}(\mathbb{F}_p)$ be the injection induced by 
$\theta_n :B(n)\rightarrow X_n$, 
and let $\Phi_n : I(n) \rightarrow \mathcal{E}(Q_n,X_n,\mathbb{F}_p)$ 
be given by $\Phi_n(A \cdot g) = \tilde{\theta}_n(A)\cdot \rho_n(g)$. 
Then $\Phi_n$ is the desired local embedding, 
which is proved exactly as in Proposition \ref{elemUBprop}. 
\end{proof}

\begin{proof}[Proof of Theorem \ref{elemthm}]
First we prove (\ref{elemthmgeneqn}). 
Let $A(n)$, $I(n)$ and $q_n$ be as in Proposition \ref{elemUBprop}. 
We show that $B_{S \cup T(\omega_0)} (n) \subseteq I(n)$. 
The upper bound in (\ref{elemthmgeneqn}) follows, as, 
taking $q_n \leq \exp(n)$ (which we may) we have: 
\begin{align*}
\lvert \mathcal{E}(Q_n,X_n,\mathbb{Z}/q_n \mathbb{Z}) \rvert
&\leq \lvert \SL_{X_n} (\mathbb{Z}/q_n \mathbb{Z}) \rvert \cdot \lvert Q_n \rvert \\
& \leq q_n ^{\lvert X_n \rvert^2} \cdot \lvert Q_n \rvert \\
& \leq \exp \big( n \lvert X_n \rvert^2 \big) \lvert Q_n \rvert\text{.}
\end{align*}
Letting $C(S,T(\omega_0),n)$ be as in Lemma \ref{SDprodgenlem}, 
it is enough to check that 
\linebreak$B_{C(S,T(\omega_0),n)}(n)\subseteq A(n)$. 
For $h \in C(S,T(\omega_0),n)$ there exist 
$s \in S$ and $g \in B_S (n-1)$ 
such that $h=E_{\omega_0,\omega_0 s} ^g=E_{\omega_0 g,\omega_0 sg}$ 
or $h=E_{\omega_0 s,\omega_0 } ^g=E_{\omega_0 sg,\omega_0 g}$. 
In either case, $h \in E_{B (n)} (\mathbb{Z})$, 
so $B_{C(S,T(\omega_0),n)} (m) \subseteq E_{B(n)}(\mathbb{Z})$ 
for all $m \in \mathbb{N}$. 
We claim, by induction, that for all $m \in \mathbb{N}$ 
and $g \in B_{C(S,T(\omega_0),n)}(m)$, 
$\Vert g \Vert_{\infty} \leq 2^{m-1}$. 
This is clear for $m=1$. 
Letting $g \in B_{C(S,T(\omega_0),n)}(m)$ 
and supposing the claim holds for smaller $m$, 
write $g = g^{\prime} h$ for 
$g^{\prime} \in B_{C(S,T(\omega_0),n)}(m-1)$ and 
$h \in C(S,T(\omega_0),n)$, and let $v \in \Omega$. 
If $v \notin B(n)$ then $vg=v$ whereas, if $v \in B(n)$ 
then by induction: 
\begin{equation*}
v g^{\prime} = \sum_{u \in B(n)} \lambda_u u
\end{equation*}
for some $\lambda_u \in \mathbb{Z}$ with 
$\lvert \lambda_u \rvert \leq 2^{m-2}$, 
so that there exist $w_1 , w_2 \in B(n)$ such that 
$h^{\pm 1} = E_{w_1,w_2}$ and: 
\begin{equation*}
vg = (v g^{\prime})h 
=(\pm\lambda_{w_1} +\lambda_{w_2}) w_2 +\sum_{u \neq w_2} \lambda_u u\text{.}
\end{equation*}
Hence $\Vert vg \Vert_{\infty} \leq 2^{m-1}$, 
and since this holds for all $v$, 
$\Vert G \Vert_{\infty}\leq 2^{m-1}$, as required. 

For the lower bound we apply Proposition \ref{LERFRFsubgrp} 
with $\mathcal{E}(\Gamma,\Omega,\mathbb{Z})$ 
replacing $\Gamma$ and $S \cup T(\omega_0)$ replacing $S$. 
Let $V_n = B_{S,\omega_0}(n)$, 
let $H_n = E_{V_n}(\mathbb{Z}) = \SL_{V_n} (\mathbb{Z})$, 
let $G_n = (V_n,E_n)$ be as in Lemma \ref{permtreediamlem}, 
let $S^{\prime} _n = \lbrace E_{v,w}:v,w \in V_n , v\neq w \rbrace$ 
and let $S^{\prime\prime} _n = B_{S\cup T(\omega_0)} (4Cn^2)\cap H_n$, 
where $C>0$ is as in Proposition \ref{elemFGprop}. 
This is permissable, 
since for $v,w \in V_n$ there exist $g,h\in B_S (n)$ with 
$v = \omega_0 g$ and $w = \omega_0 h$, 
so, using (\ref{elemFGeqn}), and by the relation: 
\begin{center}
$E_{v,w} = \big[ E_{v,\omega_0},E_{\omega_0,w} \big]$ 
(for $\omega_0 , v$ and $w$ all distinct), 
\end{center}
$S^{\prime} _n \subseteq S^{\prime\prime} _n$, 
as per the conditions of Proposition \ref{LERFRFsubgrp}. 
Since $S^{\prime\prime} _n \subseteq B_S (4Cn^2)$, 
we may clearly take $f(n) = 4Cn^2$. 
Taking $V=V_n$ in Theorem \ref{SLpresnthm} we have $S^{\prime} _n = S_G$, 
so $H_n$ admits a finite presentation $\langle S^{\prime} _n \mid R_n \rangle$, 
with the relations $R_n$ being words of bounded length. 
We may therefore take $g(n)$ to be a constant in 
Proposition \ref{LERFRFsubgrp}, and conclude that 
for constants $C_2 \geq C_1 > 0$: 
\begin{equation} \label{elemLEFRFineq}
\mathcal{L}_{\mathcal{E}(\Gamma,\Omega,\mathbb{Z})} ^{S \cup T(\omega_0)} (C_2 n^2) \geq 
\mathcal{R}_{\SL_{V_n}(\mathbb{Z})} ^{S_n ^{\prime\prime}}(C_1)
\end{equation}
(where $C_1 > 0$ may be chosen sufficiently large for our purposes). 
It therefore suffices to give a lower bound 
on the right-hand side of the last inequality. 
To this end let $Q$ be a finite group, 
and let $\phi : \SL_{V_n} (\mathbb{Z}) \rightarrow Q$ 
be an epimorphism whose restriction to 
$B_{S_n ^{\prime\prime}} (C_1)$ is injective. 
For $n \geq 2$, $\lvert V_n \rvert \geq 3$, 
so by Theorem \ref{CSPThm} 
there exists $q \in \mathbb{N}$ and a homomorphism 
$\psi : \SL_{V_n}(\mathbb{Z}/q\mathbb{Z}) \rightarrow Q$ 
such that $\phi = \psi \circ \pi_q$, 
where $\pi_q : \SL_{V_n}(\mathbb{Z}) 
\rightarrow \SL_{V_n}(\mathbb{Z}/q\mathbb{Z})$ 
is the congruence map. 
We assume $q$ to be the minimal natural number 
for which such a homomorphism $\psi$ exists. 

First note that, for $m$ larger than an absolute constant, 
$B_S (m)$ contains $S^{\prime} _2$, 
a generating set for $\SL_{V_2}(\mathbb{Z})$. 
Thus $B_{S^{\prime} _2} (4Cn^2/m) \subseteq S_n ^{\prime\prime}$, 
so $B_{S^{\prime} _2} (4C C_1 n^2/m) 
\subseteq B_{S_n ^{\prime\prime}} (C_1)$. 
Now $\SL_{V_2}(\mathbb{Z})$ is a group of exponential growth, 
so there exists an absolute constant $C_3 > 0$ such that 
$\lvert B_{S_n ^{\prime\prime}} (C_1 ) \cap \SL_{V_2}(\mathbb{Z}) \rvert \geq \exp(C_3 n^2)$. 
Now $\pi_q(\SL_{V_2}(\mathbb{Z}))=\SL_{V_2}(\mathbb{Z}/q\mathbb{Z})$, 
so by the injectivity of 
$\pi_q \mid_{B_{S_n ^{\prime\prime}} (C_1)}$ 
(and $\lvert V_2 \rvert \leq 100 \lvert S \rvert^2$), 
\begin{center}
$\exp(C_3 n^2) \leq \lvert \SL_{V_2}(\mathbb{Z}/q\mathbb{Z}) \rvert 
\leq q^{100 \lvert S \rvert^2}$. 
\end{center}
Hence there is a constant $C_4 > 0$ such that $q \geq \exp(C_4 n^2)$. 
For $n$ larger than an absolute constant, 
$\lvert V_n \rvert \geq 8$, 
so by Lemma \ref{notheorylem} there exists 
a prime $\lvert V_n \rvert / 4 \leq p \leq \lvert V_n \rvert$ 
such that there exist $a , r \in \mathbb{N}$ with $q = p^a r$, 
$(p,r)=1$ and $r \geq \sqrt{q} \geq \exp((C_4 / 2) n^2)$. 
Let $W_n \subseteq V_n$ with $\lvert W_n \rvert = p$. 
Now let $N = \ker (\psi) 
\vartriangleleft \SL_{V_n}(\mathbb{Z}/q\mathbb{Z})$. 
By minimality of $q$, 
there is no proper divisor $q^{\prime}$ of $q$ such that 
$\pi_q (K_{V_n}(q^{\prime})) \leq N$. 
By Lemma \ref{CRTlem} we may naturally identify 
$\SL_{V_n}(\mathbb{Z}/q\mathbb{Z})$ 
with $\SL_{V_n}(\mathbb{Z}/p^a\mathbb{Z}) \times \SL_{V_n}(\mathbb{Z}/r\mathbb{Z})$. 
We claim that the restriction of $\psi$ to 
$\SL_{W_n}(\mathbb{Z}/r\mathbb{Z})$ is injective. 
This yields the desired conclusion, since: 
\begin{equation}
\lvert Q \rvert \geq \lvert \SL_{W_n}(\mathbb{Z}/r\mathbb{Z}) \rvert 
\geq r^{C_5 \lvert W_n \rvert^2} 
\geq q^{C_5 \lvert V_n \rvert^2 /32} 
\geq \exp \big( C_6 n^2 \lvert V_n \rvert \big)
\end{equation}
for $n$ sufficiently large and $C_5 , C_6 > 0$ absolute constants. 
Taking $Q$ of minimal order and applying (\ref{elemLEFRFineq}), 
we have $\mathcal{L}_{\mathcal{E}(\Gamma,\Omega,\mathbb{Z})} ^{S \cup T} (C_2 n^3)
\geq C_5 \exp \big( C_6 n^3 \lvert V_n \rvert \big)$. 

Suppose, then, that $N \cap \SL_{W_n} (\mathbb{Z}/r\mathbb{Z})\neq 1$. 
By Proposition \ref{elemonlynrmlsubgrpprop}, 
there exists a prime divisor $p^{\prime}$ of $r$ 
such that: 
\begin{center}
$K_{W_n} (r/p^{\prime}) \leq N\cap \SL_{W_n} (\mathbb{Z}/r\mathbb{Z})$. 
\end{center}
Pulling back under the isomorphism from Lemma \ref{CRTlem}, we have 
$K_{W_n} (q/p^{\prime})\leq N\leq\SL_{V_n}(\mathbb{Z}/q\mathbb{Z})$. 
By Proposition \ref{elemnrmlgenprop}, 
$K_{V_n} (q/p^{\prime}) \leq N$, 
so $\phi$ factors through $\pi_{q/p^{\prime}}$, 
contradicting minimality of $q$. 

The proof of (\ref{p-elemthmgeneqn}) is much the same, but easier. 
For the upper bound, let $I(n)$ be as in Proposition \ref{p-elemUBprop}. 
We show that $B_{S\cup T(\omega_0)}(n) \subseteq I(n)$ 
just as in the integer case (except without any concerns about 
the norms of matrices). 
By Proposition \ref{p-elemUBprop} we then have a local embedding of 
$B_{S\cup T(\omega_0)}(n)$ into $\mathcal{E}(Q_n,X_n,\mathbb{F}_p)$, 
which has order at most $p^{\lvert X_n \rvert ^2} \lvert Q_n \rvert$. 
For the lower bound we may once again take 
$S^{\prime} _n = \lbrace E_{v,w}:v,w \in V_n , v\neq w \rbrace$ 
and $S^{\prime\prime} _n = B_{S\cup T(\omega_0)} (4Cn^2)\cap H_n$, 
with $H_n = \SL_{V_n} (\mathbb{F}_p)$, and deduce: 
\begin{equation*} 
\mathcal{L}_{\mathcal{E}(\Gamma,\Omega,\mathbb{Z})} ^{S \cup T(\omega_0)} (C_2 n^2) \geq 
\mathcal{R}_{\SL_{V_n}(\mathbb{F}_p)} ^{S_n ^{\prime\prime}}(C_1)
\end{equation*}
for $C_2 > C_1 > 0$, where we are able to choose $C_1$ sufficiently large. 
Since $\SL_{V_n}(\mathbb{F}_p)$ is quasisimple whenever $\lvert V_n \rvert\geq 4$, 
we have: 
\begin{equation*} 
\mathcal{R}_{\SL_{V_n}(\mathbb{F}_p)} ^{S_n ^{\prime\prime}}(C_1) 
= \lvert \PSL_{V_n}(\mathbb{F}_p) \rvert \geq c p^{\lvert V_n \rvert^2}
\end{equation*}
for some $c>0$ depending only on $p$. 
\end{proof}

\begin{rmrk}
\normalfont
LEF for $\mathcal{E}(\Gamma,\Omega,R)$ also holds for rings other 
than $\mathbb{Z}$ or $\mathbb{F}_p$. Indeed, for $\Gamma,\Omega$ satisfying 
the conditions of Theorem \ref{elemthm}, 
$\mathcal{E}(\Gamma,\Omega,R)$ is a LEF group whenever 
$R$ is a \emph{LEF ring}. 
We recall that a ring $R$ is LEF if, 
for every finite subset $F \subseteq R$ there 
is a finite ring $A$ and an injective map $\phi : F \rightarrow A$ 
such that for all $r,s \in F$, $r+s \in F$ implies 
$\phi(r+s)=\phi(r)+\phi(s)$ and $rs \in F$ implies 
$\phi(rs) = \phi(r)\phi(s)$. 
Apart from $\mathbb{Z}$, LEF rings include: 
\begin{itemize}
\item[(i)] Any finite ring; 

\item[(ii)] The group-ring $R[\Delta]$, 
where $R$ is a LEF ring 
and $\Delta$ is a LEF group. 

\end{itemize}
Rings of type (ii) include the polynomial ring 
$R[t_1 ^{\pm 1} , \ldots t_k ^{\pm 1}]$ 
(respectively $R\langle t_1 ^{\pm 1} , \ldots t_k ^{\pm 1}\rangle$) 
over $R$ in $k$ commuting (respectively non-commuting) 
variables $t_1 , \ldots , t_k$ and their multiplicative inverses, 
obtained by taking $\Delta = \mathbb{Z}^k$ 
(respectively $\Delta = F_k$). 

Moreover in many cases we can prove an inequality redolent of the 
upper bound in (\ref{elemthmgeneqn}). 
First we require the group $\mathcal{E}(\Gamma,\Omega,R)$ to be 
finitely generated. 
This will be the case, for instance, if there are finite 
subsets $T \subseteq R$, $U \subseteq R^{\ast}$, 
with $1 \in U$, such that $R$ is generated as a $\mathbb{Z}$-module 
by: 
\begin{center}
$\lbrace u_1 \cdots u_k t u_1 ^{\prime} \cdots u_k ^{\prime} 
: k \in \mathbb{N}; t \in T; u_i,u_i ^{\prime} \in U \rbrace$
\end{center}
(so that in particular, $R$ is generated as a ring by 
$T \cup U \cup U^{-1}$). 
This condition is satisfied in the examples (i) and (ii) above 
(provided, in case (ii), the coefficient ring satisfies it too). 
To see that our condition implies finite generation of 
$\mathcal{E}(\Gamma,\Omega,R)$, 
note that for $r \in R$; $u \in R^{\ast}$ 
and $v,w,x \in \Omega$ distinct: 
\begin{center}
$h_{v,x}(u) E_{v,w}(r) h_{v,x}(u)^{-1} = E_{v,w}(ur)$ 
and $h_{w,x}(u)^{-1} E_{v,w}(r) h_{w,x}(u) = E_{v,w}(ru)$, where $h_{y,z}(u) = E_{y,z} (u)E_{z,y} (-u^{-1})E_{y,z} (u)E_{y,z} (1)E_{z,y} (-1)E_{y,z} (1)$
\end{center}
Now we require a notion of LEF growth for rings. 
For $S \subseteq R$ a finite set generating $R$ as a ring, 
with $0,1 \in S$, define $B_S (n)$ inductively by $B_S(1) = S$ 
and $B_S (n+1) = \lbrace sx+y : s\in S;x,y \in B_S(n) \rbrace$. 
The LEF growth $\mathcal{L}_R ^S (n)$ of $R$ with respect to $S$ 
is then the minimal size of a finite ring admitting 
a local embedding of $B_S (n)$. 
We note that, for $v , v_i , w_i \in \Omega$, and $s_i \in S$, 
\begin{equation*}
v E_{v_1,w_1} (s_1) \cdots E_{v_n,w_n} (s_n) 
= \lambda_0 v + \sum_i \lambda_i w_i
\end{equation*}
for some $\lambda_i \in B_S(n)$. 
Arguing as in Theorem \ref{elemthm}, we can conclude: 
\begin{equation} \label{elemgenReqn}
\mathcal{L}_{\mathcal{E}(\Gamma,\Omega,R)} (n) 
\preceq \mathcal{L}_R (n)^{\lvert X_n \rvert^2} \lvert Q_n \rvert\text{.}
\end{equation}
Finally, we can often usefully bound $\mathcal{L}_R (n)$ 
for finitely generated LEF rings $R$. 
For instance $\mathcal{L}_{(\mathbb{Z},+,\cdot)} (n)\approx\exp (n)$; 
if $R$ is finite then $\mathcal{L}_R (n)$ is bounded, 
while, if $R$ is a LEF ring generated by the finite set $X$ 
and $\Delta$ is a LEF group generated by the finite set $S$, 
then $R[\Delta]$ is generated by $X \cup S$, 
and $B_{X \cup S} (n)$ consists of combinations 
with coefficients in $B_X(n)$ and supported on $B_S(n)$. 
As such, if $B_X(n)$ and $B_S(n)$ admit local embeddings 
into $A$ and $Q$, respectively, 
then $B_{X \cup S} (n)$ admits a local embedding into $A[Q]$, 
and consequently 
$\mathcal{L}_{R[\Delta]} ^{X \cup S} (n) 
\leq \mathcal{L}_R ^X (n) ^{\mathcal{L}_{\Delta} ^S (n)}$. 
\end{rmrk}

\section{The diversity of LEF growth functions} \label{SpectrumSect}

\begin{defn} \label{permissdefn}
A strictly increasing function $f : \mathbb{N} \rightarrow \mathbb{N}$ 
is \emph{permissible} if (a) $f(0) = 1$; 
(b) for all $n \geq 1$, 
\begin{equation} \label{permissineq}
(f(n)-f(n-1))/2 \leq f(n+1)-f(n) \leq 2\big(f(n)-f(n-1)\big)\text{.}
\end{equation}
\end{defn}

Roughly, a permissable function is one which grows at least linearly, 
at most exponentially (with exponent $2$), 
and whose growth does not oscillate too quickly. 

Let $F_2$ be the free group of rank $2$, 
and let $S = \lbrace a,b \rbrace \subseteq F_2$ be a free basis. 

\begin{propn} \label{PermissProp}
For any permissible function $f$, 
there is a set $\Omega (f)$; 
a transitive action $F_2 \curvearrowright \Omega(f)$; 
a point $\omega_0 \in \Omega(f)$; 
finite groups $Q_n$ and transitive actions 
$Q_n \curvearrowright X_n$, such that for all $n \in \mathbb{N}$, 
\begin{itemize}
\item[(i)] $B_{S,\omega_0} (n) = f(n)$; 
\item[(ii)] There exists a local embedding of 
$\big( B_S(n),B_{S,\omega_0} (n) \big)$ into $(Q_n,X_n)$; 
\item[(iii)] $\lvert X_n \rvert = f(2n)$; 
\item[(iv)] $\lvert Q_n \rvert \leq f(2n)! \exp (Cn)$, 
for some absolute constant $C>0$. 
\end{itemize}
\end{propn}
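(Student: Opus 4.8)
The plan is to construct $\Omega(f)$ directly as the vertex set of a Schreier graph of $F_2$, assembled one level at a time. Write $d(0)=1$ and $d(n)=f(n)-f(n-1)$ for $n\geq 1$; strict monotonicity gives $d(n)\geq 1$, and the inequality (\ref{permissineq}) says exactly that $\tfrac12 d(n)\leq d(n+1)\leq 2d(n)$ for $n\geq 1$ (consecutive level sizes differ by at most a factor $2$ in each direction). Put $\Omega(f)=\bigsqcup_{n\geq 0}L_n$ with $|L_n|=d(n)$, let $\omega_0$ be the point of $L_0$, and write $\ell(v)=n$ when $v\in L_n$. I would build the two permutations $\alpha,\beta$ of $\Omega(f)$ (the actions of $a$ and $b$) so that every edge of the resulting Schreier graph joins vertices with $|\ell(v)-\ell(w)|\leq 1$, and every $v\in L_n$ with $n\geq 1$ has a neighbour in $L_{n-1}$. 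Concretely, between $L_n$ and $L_{n+1}$ I would use a disjoint union of gadgets --- \emph{continues} (one vertex of $L_n$ joined to one of $L_{n+1}$), \emph{splits} (one vertex of $L_n$ joined to two of $L_{n+1}$) and \emph{merges} (two vertices of $L_n$ joined to one of $L_{n+1}$) --- covering all of $L_n\sqcup L_{n+1}$; an elementary count shows such a covering exists precisely when $\tfrac12 d(n)\leq d(n+1)\leq 2d(n)$, which is where permissibility (both inequalities) enters. Unused ports at each vertex are then paired off by matchings inside the individual levels, with a small parity adjustment near the base of the graph (realised with loops or double edges) to keep $\alpha$ and $\beta$ honest permutations. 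The graph is connected, so $F_2\curvearrowright\Omega(f)$ is transitive; and since all edges change the level by at most $1$ while every level is reachable from $\omega_0$ by descending one level at a time, the distance from $\omega_0$ to $v$ in $\Schr(F_2,\Omega(f),S)$ is exactly $\ell(v)$. Hence $|B_{S,\omega_0}(n)|=\sum_{k\leq n}d(k)=f(n)$, which is (i).

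For (ii)--(iv), take $X_n=B_{S,\omega_0}(2n)\subseteq\Omega(f)$, so $|X_n|=f(2n)$, giving (iii). The $\alpha^{\pm1},\beta^{\pm1}$-edges leaving $X_n$ start in $L_{2n}$; redirecting each such edge to one of the equally many ``dangling'' in-edges of $X_n$ (any bijection between the two sets) yields permutations $\bar\alpha,\bar\beta$ of $X_n$ that agree with $\alpha,\beta$ on $X_n\setminus L_{2n}$ and leave the descending edges towards $\omega_0$ untouched, so $\langle\bar\alpha,\bar\beta\rangle$ still acts transitively on $X_n$. By Example \ref{ELEFListEx} (iii)--(iv) the function $\mathcal{R}_{F_2}^S$ is at most exponential, so there is a finite quotient $\rho_n:F_2\to P_n$ with $\rho_n$ injective on $B_S(2n)$ and $|P_n|\leq\exp(Cn)$. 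Set $Q_n=\langle\bar\alpha,\bar\beta\rangle\times P_n$, acting on $X_n$ through the first coordinate: this is a transitive finite action with $|Q_n|\leq|\Sym(X_n)|\cdot|P_n|\leq f(2n)!\exp(Cn)$, which is (iv). Finally let $\pi_n:F_2\to Q_n$ send $g$ to $(\bar g,\rho_n(g))$, where $\bar g\in\langle\bar\alpha,\bar\beta\rangle$ is the image of $g$ under $a\mapsto\bar\alpha$, $b\mapsto\bar\beta$, and let $\theta_n:B_{S,\omega_0}(n)\hookrightarrow X_n$ be the inclusion.

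To verify that $(\pi_n,\theta_n)$ is a local embedding of $(B_S(n),B_{S,\omega_0}(n))$ into $(Q_n,X_n)$: $\theta_n$ is injective, and $\pi_n$ restricted to $B_S(n)$ is injective because its $P_n$-coordinate $\rho_n$ is injective on $B_S(n)$ (indeed on $B_S(2n)$). For the compatibility (\ref{LEFactioneqn}), fix $g\in B_S(n)$ and $\omega\in B_{S,\omega_0}(n)$ with $\omega g\in B_{S,\omega_0}(n)$, and write $g=s_1\cdots s_m$ with $m\leq n$. The trajectory $\omega=v_0,v_1,\dots,v_m=\omega g$ in $\Schr(F_2,\Omega(f),S)$ satisfies $\ell(v_i)\leq\min(\ell(v_0)+i,\ell(v_m)+(m-i))\leq n+\tfrac m2\leq\tfrac{3n}{2}<2n$, so it never meets $L_{2n}$. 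Since $\bar\alpha,\bar\beta$ coincide with $\alpha,\beta$ off $L_{2n}$, evaluating the word $\bar g$ at $\omega$ inside $X_n$ retraces this same trajectory, so $\theta_n(\omega)\pi_n(g)=v_m=\omega g=\theta_n(\omega g)$, as required.

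The substantive work --- and the main obstacle --- is the level-by-level construction of $\alpha$ and $\beta$: arranging the continue/split/merge gadgets together with the intra-level matchings so that they assemble into two genuine permutations of the infinite set $\Omega(f)$ (each vertex ending up with exactly one out-port and one in-port for each of $\alpha$ and $\beta$), and handling the parity and degree bookkeeping at the bottom of the graph, where $L_0$ has a single vertex. Everything downstream --- the folding of the boundary, the product with $P_n$, and the trajectory estimate --- is routine, and the factor $\exp(Cn)$ allowed in (iv) is precisely what is needed to absorb $|P_n|$, the cost of residual finiteness of $F_2$.
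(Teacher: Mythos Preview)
Your outline is correct and the downstream portion --- taking $X_n=B_{S,\omega_0}(2n)$, folding the boundary, forming the product with a small finite quotient of $F_2$ to force injectivity on $B_S(n)$, and the trajectory estimate $\ell(v_i)\leq n+m/2\leq 3n/2<2n$ --- matches the paper's proof essentially verbatim.

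The difference is in the construction of the action, and here the paper does something slicker than your gadget scheme. Rather than letting both generators contribute edges between every pair $L_n,L_{n+1}$ and then worrying about how to decompose the resulting $4$-regular graph into two permutations (the ``parity and degree bookkeeping'' you flag as the main obstacle), the paper \emph{alternates}: $a$ is declared to preserve each block $L_{2m}\sqcup L_{2m+1}$, while $b$ fixes $\omega_0$ and preserves each block $L_{2m-1}\sqcup L_{2m}$. Thus only one generator connects any given consecutive pair of levels, and on each two-level block the permutation is written down explicitly as a product of disjoint $3$-cycles (two points from the larger level, one from the smaller) together with transpositions --- the count $|L|=2u+v$, $|L'|=u+v$ is exactly where permissibility enters. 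This immediately yields two honest permutations of $\Omega(f)$ with no intra-level matchings, no $2$-factor decomposition, and no parity adjustments at the base. A pleasant side-effect is that $b$ already restricts to a permutation of $X_n=B_{S,\omega_0}(2n)$ (since $2n$ is even), so only $a$ needs to be modified on $L_{2n}$ when folding.

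Your continue/split/merge plan can certainly be made to work, but it is strictly more laborious; the alternating trick dissolves precisely the obstacle you identify.
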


\begin{proof}[Proof of Theorem \ref{SpectrumMainThm}]
Let $F_2 \curvearrowright \Omega(f)$; $\omega_0 \in \Omega(f)$ 
and $Q_n \curvearrowright X_n$ be as in Proposition \ref{PermissProp}. 
Let $\Delta (f) = \mathcal{S} (F_2 , \Omega(f))$. 
Then by the inequalities (\ref{RFextneqn1}) from Theorem \ref{RFextnThm}, 
and by Proposition \ref{PermissProp} (ii), 
\begin{equation*}
\gamma_{F_2,\Omega(f)} (n)! \preceq \mathcal{L}_{\Delta (f)} (n) 
\preceq \lvert X_n \rvert ! \lvert Q_n \rvert.
\end{equation*}
On the one hand, by Proposition \ref{PermissProp} (i), 
\begin{equation*}
f(n)! \preceq \gamma_{F_2,\Omega(f)} (n)!. 
\end{equation*}
On the other hand, by Proposition \ref{PermissProp} (iii) and (iv), 
\begin{align*}
\lvert X_n \rvert ! \lvert Q_n \rvert & \leq f(2n)! f(2n) \exp (Cn) \\
& \preceq \exp \big( f(2n) \log f(2n)+ \log f(2n) + Cn \big)  \\
& \preceq \exp\big( f(n)\log f(n) \big) \text{ (since $n \preceq f(n)$)}\\
& \preceq f(n)! 
\end{align*}
(using Stirling's approximation). 
\end{proof}

It remains to prove Proposition \ref{PermissProp}. 
First we construct the set $\Omega (f)$ and the action of $F_2$ upon it. 

\begin{propn} \label{PermissActionProp}
Let $f : \mathbb{N} \rightarrow \mathbb{N}$ be a permissable function. 
There exists a set $\Omega(f)$; 
a transitive action $F_2 \curvearrowright \Omega(f)$; 
and a point $\omega_0 \in \Omega(f)$ such that 
for all $n \in \mathbb{N}$, $B_{S,\omega_0} (n) = f(n)$ 
and moreover, letting $L_0 = \lbrace \omega_0 \rbrace$ and for $n \geq 1$ taking 
$L_n = B_{S,\omega_0} (n) \setminus B_{S,\omega_0} (n-1)$, we have: 
\begin{itemize}
\item[(i)] $a$ preserves $L_{2m} \sqcup L_{2m+1}$ 
for each $m \geq 0$; 
\item[(ii)] $b$ fixes $\omega_0$ and preserves 
$L_{2m-1} \sqcup L_{2m}$ for each $m \geq 1$. 
\end{itemize}
\end{propn}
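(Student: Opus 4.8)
The plan is to build $\Omega(f)$ as a disjoint union of finite ``levels'' $L_0, L_1, L_2, \dots$ with $L_0 = \{\omega_0\}$ and $|L_n| = f(n) - f(n-1)$ for $n \ge 1$, and to define $a$ and $b$ on $\Omega(f)$ so that (i) and (ii) hold by construction: $a$ will be defined blockwise on each ``$a$-block'' $L_{2m} \sqcup L_{2m+1}$ ($m \ge 0$), and $b$ blockwise on each ``$b$-block'' $L_{2m-1} \sqcup L_{2m}$ ($m \ge 1$) together with $b(\omega_0) = \omega_0$. Each of these two partitions of $\Omega(f)$ is an honest partition (the $b$-class of $\omega_0$ being the singleton $\{\omega_0\}$), so any choice of permutations of the blocks assembles into genuine permutations $a, b$ of $\Omega(f)$, hence an $F_2$-action. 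Once (i)--(ii) hold, a point of $L_k$ can be $a^{\pm1}$- or $b^{\pm1}$-adjacent only to points of $L_{k-1} \cup L_k \cup L_{k+1}$, so an immediate induction gives $B_{S,\omega_0}(n) \subseteq L_0 \cup \dots \cup L_n$, and hence $|B_{S,\omega_0}(n)| \le \sum_{k \le n}|L_k| = f(n)$. This ``$\le$'' half is automatic, and it is exactly the role of the alternating parity in (i)--(ii): the generators cannot short-circuit past a level.

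The content is therefore the reverse inequality. I will choose the blockwise permutations so that every point of $L_n$ is adjacent --- via $a$ when $n$ is odd and via $b$ when $n$ is even and positive --- to some point of $L_{n-1}$. Granting this, an induction on $n$ shows that every point of $L_n$ lies at distance exactly $n$ from $\omega_0$ (distance $\ge n$ from the ``$\le$'' direction above, and $\le n$ from adjacency to $L_{n-1}$), so $L_n$ is precisely the sphere of radius $n$; summing cardinalities gives $|B_{S,\omega_0}(n)| = f(n)$, and since $\Omega(f) = \bigcup_n L_n = \bigcup_n B_{S,\omega_0}(n)$ the action is transitive. To realise the required adjacency inside the block $L_k \sqcup L_{k+1}$ that is used to pass from level $k$ to level $k+1$, I take the permutation of that block to be a single cycle running through all of $L_k$ and inserting \emph{at most two} points of $L_{k+1}$ into each gap between consecutive points of $L_k$; then every point of $L_{k+1}$ has a cycle-neighbour in $L_k$, while any surplus points of $L_k$ simply sit in the cycle with no inserted neighbour.

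The single place where a hypothesis is used is here: distributing all of $L_{k+1}$ among the gaps two at a time requires $|L_{k+1}| \le 2|L_k|$, and the right-hand inequality of~(\ref{permissineq}) says precisely $|L_{n+1}| = f(n+1) - f(n) \le 2\big(f(n) - f(n-1)\big) = 2|L_n|$; the base block $L_0 \sqcup L_1$ needs the corresponding bound $|L_1| \le 2|L_0| = 2$ at the left endpoint. (The left-hand inequality of~(\ref{permissineq}) plays no role in this proposition.) I do not expect a genuine obstacle: the whole argument is a direct construction, and the only thing that needs care is keeping the parities straight in the two inductions --- that $a$-blocks join even levels to odd ones while $b$-blocks join odd levels to even ones, and that level $0$ is the base case of both.
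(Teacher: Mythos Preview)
Your proposal is correct and follows the same overall plan as the paper --- build $\Omega(f)$ as a union of levels $L_n$ of the right sizes, define $a$ and $b$ blockwise so that (i)--(ii) hold by fiat, and then verify that $L_n$ is exactly the sphere of radius $n$ --- but your block permutation is genuinely different. The paper, on each block $\{L_k,L_{k+1}\}$, writes the pair as $\{L,L'\}$ with $|L|\ge|L'|$, solves $|L|=2u+v$, $|L'|=u+v$, and lets the generator act as a product of $u$ disjoint $3$-cycles (two points of $L$, one of $L'$) together with $v$ transpositions pairing off the rest. This makes every point of each level adjacent to some point of the other, and so uses whichever side of~(\ref{permissineq}) bounds the larger level by twice the smaller. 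Your single long cycle is simpler and, because you only need downward adjacency $L_{k+1}\to L_k$, you correctly observe that only the right-hand inequality $|L_{k+1}|\le 2|L_k|$ is required; your parenthetical remark that the left-hand inequality plays no role here is a small sharpening of the paper's argument.

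One point deserves a caveat. The bound $|L_1|\le 2|L_0|=2$ that you invoke for the base block $L_0\sqcup L_1$ is \emph{not} a consequence of Definition~\ref{permissdefn}: condition~(\ref{permissineq}) is only imposed for $n\ge 1$, so nothing in the definition bounds $f(1)-f(0)$. The paper's proof has exactly the same lacuna (its $2u+v$ decomposition of $L_0\sqcup L_1$ also needs $|L_1|\le 2$). In fact conditions (i) and (ii) themselves force $|L_1|\le 2$, since $b$ fixes $\omega_0$ and $\omega_0$ has at most two $a^{\pm1}$-neighbours; so the Proposition as stated tacitly presupposes $f(1)\le 3$. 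This is harmless for the intended applications --- when one passes to a permissible representative of a given $\approx$-class one can always arrange $f(1)$ to be small --- but you have correctly isolated the one spot where an extra boundary hypothesis is implicitly being used.
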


\begin{proof}
Let $\lbrace L_n : n \in \mathbb{N} \rbrace$ be a family of disjoint 
finite sets, with $L_0 = \lbrace \omega_0 \rbrace$ 
and $\lvert L_n \rvert = f(n)-f(n-1)$ for $n \geq 1$. 
Set $\Omega(f)$ to be the disjoint union of the $L_n$. 

Let us define the action of $a$ on $L_{2m} \sqcup L_{2m+1}$. 
Write $\lbrace L_{2m} , L_{2m+1} \rbrace = \lbrace L,L' \rbrace$ 
with $\lvert L \rvert \geq \lvert L' \rvert$. 
By permissability of $f$, there exist $u , v \in \mathbb{N}$ 
with $\lvert L \rvert = 2u +v$ and $\lvert L' \rvert = u+v$. 
Choose partitions $L = U \sqcup V$ and $L' = U' \sqcup V'$ 
with $\lvert U \rvert = 2u$; $\lvert U' \rvert = u$ 
and $\lvert V \rvert = \lvert V' \rvert = v$. 
Let $a$ act on $U \sqcup U'$ as a product of $u$ disjoint $3$-cycles, 
each consisting of two elements of $U$ and one element of $U'$. 
Let $a$ act on $V \sqcup V'$ as a product of $v$ disjoint transpositions, 
each swapping an element of $V$ with an element of $V'$. 
We define the action of $b$ precisely the same way, 
with $L_{2m-1}$ replacing $L_{2m+1}$ throughout the above 
(and $\omega_0$ being fixed by $b$). 

Certainly $a$ and $b$ act as well-defined permutations of $\Omega(f)$ 
satisfying (i) and (ii). 
By construction, for $n \geq 1$ every element of $L_n$ is adjacent in the Schreier 
graph to at least one element each from $L_{n-1}$ and $L_{n+1}$, 
but to no points outside $L_{n-1} \cup L_n \cup L_{n+1}$. 
Thus the elements of $L_n$ are precisely the vertices of 
the Schreier graph at distance $n$ from $\omega_0$. 
\end{proof}




\begin{proof}[Proof of Proposition \ref{PermissProp}]
Let $F_2 \curvearrowright \Omega(f)$ and $\omega_0 \in \Omega(f)$ 
be as in Proposition \ref{PermissActionProp}. Suppose $n geq 2$. 
Let $X_n = B_{S,\omega_0} (2n)$; 
let $\theta_n : B_{S,\omega_0} (n) \rightarrow X_n$ be inclusion. 
We define $\alpha_n , \beta_n \in \Sym (X_n)$ such that 
$(x)\alpha_n = (x)a$ for $x \in B_{S,\omega_0} (2n-1)$; 
the restriction of $\alpha_n$ to $L_{2n}$ is an arbitrary permutation of $L_{2n}$ 
and $(x)\beta_n = (x)b$ for $x \in X_n$. 
By the construction of the $L_i$ in Proposition \ref{PermissActionProp}, 
$\alpha_n$ and $\beta_n$ are well-defined permutations of $X_n$. 
Let $P_n = \langle \alpha_n , \beta_n \rangle \leq \Sym (X_n)$. 
There is a unique homomorphism $\phi_n : F_2 \rightarrow P_n$ 
satisfying $\phi(a) = \alpha_n$ and $\phi(b) = \beta_n$. 

By Example \ref{ELEFListEx} (iv) 
there exists a constant $C>0$ and finite groups $G_n$ 
such that for every $n$, $\lvert G_n \rvert \leq \exp(Cn)$ 
and there is a homomorphism $\psi_n : F_2 \rightarrow G_n$ 
whose restriction to $B_S (n)$ is injective. 
Let $Q_n = P_n \times G_n$, 
and let $Q_n$ act on $X_n$ via $(x) (\sigma,g) = (x)\sigma$. 
Then $Q_n$ and $X_n$ satisfy conclusions (iii) and (iv). 
Further, $\pi_n = (\phi_n \times \psi_n) : F_2 \rightarrow Q_n$ 
is a homomorphism whose restriction to $B_S (n)$ is injective. 

We claim that the restriction of $\pi_n$ to $B_S(n)$ and
$\theta_n$ satisfy condition (\ref{LEFactioneqn}) 
of Definition \ref{LEFactiondefn}. 
Given the above, this will imply that $(\pi_n,\theta_n)$ 
yields a local embedding of actions, and will conclude the proof. 
Let $g = w (a,b) \in B_S (n)$ be a word of length at most $n$, 
and let $\omega \in B_{S,\omega_0} (n)$. 
Suppose that $\omega g \in B_{S,\omega_0} (n)$. 
Then there is a path in $\Schr (F_2,\Omega(f),S)$ 
from $\omega$ to $\omega g$ of length at most $n$. 
This path lies entirely inside 
$B_{S,\omega_0} (3n/2) \subseteq B_{S,\omega_0} (2n-1)$ (since $n \geq 2$). 
By construction of $\alpha_n$ and $\beta_n$, 
the induced subgraphs on $B_{S,\omega_0} (2n-1)$ 
in $\Schr (F_2,\Omega(f),S)$ and $\Schr (P_n,X_n,\lbrace \alpha_n,\beta_n\rbrace)$ 
are isomorphic, with $a$-labelled edges (respectively $b$-labelled edges) 
corresponding to $\alpha_n$-labelled edges (respectively $\beta_n$-labelled edges). 
Thus $\theta_n (\omega) \pi_n (g) = \theta_n (\omega) w(\alpha_n,\beta_n) 
= \omega w(\alpha_n,\beta_n) = \omega g = \theta_n (\omega g)$, 
as required. 
\end{proof}

\begin{proof}[Proof of Corollary \ref{SpectrumCoroll}]
In the cases (i), (ii) and (iii) we take $\tilde{f}(n) = n^{\alpha}/\log(n)$, 
$\exp (\log(n)^{\alpha}) / \log(n)^{\alpha}$ 
and $\exp (n^{\beta})/n^{\beta}$, 
so that $\tilde{f}(n) \log \tilde{f}(n) \approx n^{\alpha}$, 
$\exp (\log(n)^{\alpha})$ and $\exp (n^{\beta})$, respectively. 
Then $\tilde{f}$ satisfies condition (\ref{permissineq}) 
for all $n$ sufficiently large, 
and consequently there exists a permissable function $f$ such that 
$f \approx \tilde{f}$. 
Applying Theorem \ref{SpectrumMainThm} to $f$ and using Stirling's approximation, 
there exist $C,c>0$ such that: 
\begin{equation*}
\exp \big( c \tilde{f}(n) \log \tilde{f}(n) \big) 
 \preceq \mathcal{L}_{\Delta(f)} 
  \preceq \exp \big( C \tilde{f}(n) \log \tilde{f}(n) \big)
\end{equation*}
as required. 
\end{proof}

\subsection*{Acknowledgements}

This work was partially supported by ERC grant no. 648329 ``GRANT''. 
I am grateful to Vadim Alexeev, Goulnara Arzhantseva and Masato Mimura 
for enlightening conversations which helped to 
shape my thinking on the subject of this paper. 
Parts of this work were undertaken while the author was 
a visiting fellow at the Hausdorff Research Institute for Mathematics 
in Bonn for the program ``Logic and Algorithms in 
Group Theory'', 
and other parts while attending the program ``$L^2$-invariants and 
their analogues in positive characteristic'' at 
the Instituto de Ciencias Matem\'{a}ticas in Madrid. 
It is my pleasure to thank the organizers 
for putting together these excellent programs, 
and HIM and ICMAT for providing pleasant working conditions.


\begin{thebibliography}{99}

\bibitem{ArzChe} G. Arzhantseva, J.P. Cherix, 
{\it Quantifying metric approximations of groups,} 
arXiv:2008.12954 [math.GR]. 

\bibitem{BouCorn} K. Bou-Rabee, Y. Cornulier, 
{\it Systolic growth of linear groups,} 
Proc. Amer. Math. Soc. 144 (2016), no. 2, 529--533.

\bibitem{BoRaMcR} K. Bou-Rabee, B. McReynolds, 
{\it Asymptotic growth and least common multiples in groups,} 
Bull. London Math. Soc. (2011) 43(6): 1059--1068. 

\bibitem{BoRaStud} K. Bou-Rabee, D. Studenmund, 
{\it Full residual finiteness growth of nilpotent groups,}
Israel J. Math. 214 (2016) Issue 1, 209--233. 

\bibitem{Brad} H. Bradford, 
{\it Quantifying local embeddings into finite groups, }
arXiv:2104.07111 [math.GR]. 

\bibitem{BradDona} H. Bradford, D. Dona, 
{\it Topological full groups of minimal subshifts and
quantifying local embeddings into finite groups, }
arXiv:2106.09145 [math.GR]. 

\bibitem{CoRoWi} M. Condor, E. Robertson, P. Williams, 
{\it Presentations for $3$-dimensional special linear groups 
over integer rings, }
Proc. Amer. Math. Soc. 115 (1992), Number 1, 19--26. 

\bibitem{Corn} Y. Cornulier, 
{\it Finitely presented wreath products and double coset decompositions,}
Geom. Dedicata 122 (2006), Issue 1, 89--108. 

\bibitem{EleSza} G. Elek, E. Szabo, 
{\it On sofic groups, }
J. Group Theory, Volume 9 (2006), Issue 2, 161--171. 

\bibitem{Grig} R. Grigorchuk, 
{\it Degrees of growth of finitely generated groups and 
the theory of invariant means,} 
Izv. Akad. Nauk SSSR Ser. Mat. 48(5) (1984) 939--985. 

\bibitem{Kling} W. Klingenberg, 
{\it Linear groups over local rings, }
Bull. Amer. Math. Soc. 66 (1960), Number 4, 294--296.

\bibitem{Mimu} M. Mimura, 
{\it An extreme counterexample to the Lubotzky-Weiss conjecture, }
arXiv:1809.08918 [math.GR]

\bibitem{MimSak} M. Mimura, H. Sako, 
{\it Group approximation in Cayley topology and coarse geometry, 
Part I: Coarse embeddings of amenable groups, }
Journal of Topology and Analysis 13.01 (2021) 1--47.

\bibitem{Solo} L. Solomon, 
{\it Presenting the symmetric group with transpositions, }
Journal of Algebra 168.2 (1994) 521--524. 

\bibitem{VerGor} A.M. Vershik, E.I. Gordon, 
{\it Groups that are locally embeddable in the class of finite groups, }
St Petersburg Mathematical Journal 9(1) (1998) 49--68. 

\end{thebibliography}
\end{document}